\newlength{\myhmargin} \setlength{\myhmargin}{1in} \addtolength{\myhmargin}{18pt}
\DeclareMathAlphabet{\mathpzc}{OT1}{pzc}{m}{it}
\DeclareSymbolFont{bbold}{U}{bbold}{m}{n}
\DeclareSymbolFontAlphabet{\mathbbold}{bbold}
\newcommand{\symgrp}{\mathrm{S}_N}
\newcommand{\spinc}{\textrm{Spin}^c}
\newcommand{\spc}{\mathfrak{s}}
\newcommand{\F}{\mathbb{F}}
\newcommand{\cfhat}{\widehat{\mathit{CF}}}
\newcommand{\dhat}{\widehat{\partial}_{\mathit{HF}}}
\newcommand{\hfhat}{\widehat{\mathit{HF}}}
\newcommand{\acs}{J_{\mathit{HF}}}
\newcommand{\del}{\partial} 
\newcommand{\CCF}{\widehat{\mathcal{CF}}}
\newcommand{\G}{\textsc{g}}
\newcommand{\N}{\textsc{n}}
\newcommand{\D}{\mathcal{D}}
\newcommand{\DA}{\mathcal{D}(A)}
\newcommand{\pitwo}{\widehat{\pi}_2}
\newcommand{\OSz}{\hat{c}(\xi)}
\newcommand{\Z}{\mathbb{Z}}
\newcommand{\R}{\mathbb{R}}
\newcommand{\pZ}{\mathpzc{Z}}
\newcommand{\pB}{\mathpzc{B}}
\newcommand{\x}{\mathbf{x}}
\newcommand{\y}{\mathbf{y}}
\newcommand{\vv}{\mathbf{v}}
\newcommand{\w}{\mathbf{w}}
\newcommand{\bstheta}{\boldsymbol\theta}
\newcommand{\vx}{\vec{\mathbf{x}}}
\newcommand{\vy}{\vec{\mathbf{y}}}
\newcommand{\vw}{\vec{\mathbf{w}}}
\newcommand{\vz}{\vec{\mathbf{z}}}
\newcommand{\vty}{\vec{\tilde{\y}}}
\newcommand{\vtheta}{\vec{\boldsymbol\theta}}
\newcommand{\bsa}{\boldsymbol\alpha}
\newcommand{\bsb}{\boldsymbol\beta}
\newcommand{\bsg}{\boldsymbol\gamma}
\newcommand{\Mba}{\Sigma,\bsb,\bsa} 
\newcommand{\Mbpa}{\Sigma,\bsb',\bsa}
\newcommand{\Pg}{S} 
\newcommand{\arcs}{\mathbf{a}} 
\newcommand{\barcs}{\mathbf{b}}
\newcommand{\base}{\mathbf{z}}
\newcommand{\B}{\mathcal{B}} 
\newcommand{\mon}{\phi} 
\newcommand{\obd}{(\Pg,\mon)}  
\newcommand{\ordn}{o} 
\newcommand{\ord}{\mathbf{o}} 
\newcommand{\bbord}{\mathbbold{o}} 
\def\@xfootnote[#1]{%
  \protected@xdef\@thefnmark{#1}%
  \@footnotemark\@footnotetext}
\title{Filtering the Heegaard Floer contact invariant}
\author[Kutluhan]{\c Ca\u gatay Kutluhan}
\author[Mati\'{c}]{Gordana Mati\'{c}}
\author[Van Horn-Morris]{Jeremy Van Horn-Morris}
\author[Wand]{Andy Wand}
\address{Department of Mathematics, University at Buffalo}
\email{kutluhan@buffalo.edu}
\address{Department of Mathematics, University of Georgia}
\email{gordana@math.uga.edu}
\address{Department of Mathematical Sciences, University of Arkansas}
\email{jvhm@uark.edu}
\address{School of Mathematics and Statistics, University of Glasgow}
\email{andy.wand@glasgow.ac.uk}
\thanks{\c Ca\u gatay Kutluhan was supported in part by NSF grant DMS-1360293 and a Simons Foundation grant No.~519352. \\ 
Gordana Mati\'c was supported in part by Simons Foundation grant No.~246461 and NSF grant DMS-1664567.\\
Jeremy Van Horn-Morris was supported in part by Simons Foundation grant No.~279342 and NSF grant DMS-1612412. \\
Andy Wand was supported in part by ERC grant \textsc{geodycon} and EPSRC EP/P004598/1.}
\newtheorem{theorem}{Theorem}[section]
\newtheorem{prop}[theorem]{Proposition}
\newtheorem{lem}[theorem]{Lemma}
\newtheorem{cor}[theorem]{Corollary}
\newtheorem{conj}[theorem]{Conjecture}
\newtheorem{ques}[theorem]{Question}
\theoremstyle{definition}
\newtheorem{defn}[theorem]{Definition}
\newtheorem*{rmk}{Remark}
\newtheorem{stp}{Step}
\numberwithin{equation}{section}
\begin{document}
\sloppy
\bibliographystyle{amsalpha}

\begin{abstract}
We define an invariant of contact structures in dimension three from Heegaard Floer homology. This invariant takes values in the set $\Z_{\geq0}\cup\{\infty\}$. It is zero for overtwisted contact structures, $\infty$ for Stein fillable contact structures, non-decreasing under Legendrian surgery, and computable from any supporting open book decomposition. As an application, we obstruct Stein fillability on contact 3-manifolds with non-vanishing Ozsv\'ath--Szab\'o contact class. 
\end{abstract}
\subjclass[2010]{57R17 (Primary), 57R58 (Secondary)}
\keywords{Heegaard Floer homology, contact structures}
\maketitle
\vspace{0.25in}

\section{Introduction} 
\label{sec:intro}

The goal of this article is to define an invariant of closed contact 3-manifolds as a refinement of the contact invariant in Heegaard Floer homology, the \emph{Ozsv\'ath--Szab\'o contact class} \cite{OzsvathSzabo4}, and to study some of its properties. Let $M$ be a closed orientable 3-manifold and $\xi$ be a contact structure on $M$. To define our invariant, we start from an open book decomposition of $M$ supporting $\xi$ and a collection of pairwise disjoint, properly embedded arcs on a page of the open book decomposition. From this data we build a filtered chain complex out of the corresponding Heegaard Floer chain complex, whose filtration captures in an algebraic sense the topological complexity of curves counted by the differential. We then consider how far the Ozsv\'ath--Szab\'o contact class survives in the associated spectral sequence. The result is an invariant of the contact manifold, denoted $\ord(M,\xi)$ and read the \emph{spectral order}, or simply \emph{order}, of $(M,\xi)$, taking values in $\Z_{\geq0}\cup\{\infty\}$.

\begin{theorem}
\label{thm:main}
The contact invariant $\ord$ satisfies the following properties: 
\begin{itemize}\leftskip-0.35in
\item $\ord(M,\xi)=0$ if $(M,\xi)$ is overtwisted.
\item $\ord(M,\xi)=\infty$ if $(M,\xi)$ is Stein fillable.
\item $\ord(M,\xi)$ can be detected on an arbitrary supporting open book decomposition of $(M,\xi)$.
\end{itemize}
\end{theorem}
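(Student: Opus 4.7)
The plan is to first establish that $\ord(M,\xi)$ is well-defined as an invariant of the contact manifold, which is essentially the content of the third bullet, and then to compute it in the two extreme cases. For well-definedness, I would invoke the Giroux correspondence: any two open book decompositions supporting $(M,\xi)$ are related by a finite sequence of positive stabilizations and isotopies. It therefore suffices to check independence from (a) the arc system $\arcs$ chosen on a fixed page of $\obd$, and (b) a positive stabilization of $\obd$. For (a), I would compare the spectral sequences for two arc systems by passing through an intermediate common refinement and constructing a filtered chain homotopy equivalence between the resulting Heegaard Floer complexes, so that the page at which $\OSz$ dies is preserved. For (b), the natural candidate arc system after stabilization is the old one augmented by an arc dual to the new one-handle, and one must argue that this produces the same spectral order.

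For the overtwisted case, the argument should reduce to Ozsv\'ath-Szab\'o's vanishing result $\OSz=0$ in $\hfhat(-M)$, combined with a choice of supporting open book exhibiting this vanishing at the lowest filtration level. Specifically, overtwistedness gives a supporting open book with a canceling configuration of arcs, which at the chain level yields a generator $y$ in the same filtration as $\C$ with $\dhat y$ representing the contact class. This forces $\OSz$ to die at the $E_1$ page, giving $\ord(M,\xi)=0$ with the natural indexing convention.

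For the Stein fillable case, I would start from a supporting open book whose monodromy is a composition of positive Dehn twists along non-separating simple closed curves, which exists by the theorem of Loi-Piergallini and Akbulut-Ozbagci. With such a monodromy one expects to be able to choose $\arcs$ so that the distinguished intersection point $\C$ is closed under the filtered differential at every page of the spectral sequence: positivity of the monodromy should preclude holomorphic disks with output $\C$ that drop the filtration to any finite level. Hence $\OSz$ survives indefinitely, yielding $\ord(M,\xi)=\infty$.

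The hardest step will be invariance under positive stabilization. A stabilization may be performed along an arbitrary properly embedded arc on a page, which in general intersects the chosen arc system in essential ways, so identifying the contact generator across the old and new filtered complexes and tracking its survival level requires a careful chain-level analysis. Since the filtration is defined precisely in terms of the combinatorial features of $\triple$ that are most altered by stabilization, this step lies at the technical heart of the theory and is where I expect the bulk of the effort in the proof to be concentrated.
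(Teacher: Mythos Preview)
Your proposal contains a genuine structural gap. You frame the third bullet as a well-definedness statement and set out to prove that the quantity $\ordn\triple$ is invariant under positive stabilization. But $\ord(M,\xi)$ is \emph{defined} as the minimum of $\ordn$ over all data, so invariance is automatic; the content of the third bullet is that this minimum is actually achieved on any fixed open book (with a suitable arc collection). More seriously, the stabilization invariance you are aiming for is simply false: the paper exhibits an overtwisted contact structure with an open book and arc basis for which $\ordn=1$, whereas after stabilizing to reveal the overtwisted disk one finds an open book with $\ordn=0$. So the ``hardest step'' you identify cannot be carried out as stated.

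What the paper does instead is asymmetric. One shows that $\ordn$ is \emph{non-increasing} under positive stabilization and under enlarging the arc collection (easy inclusion arguments), and independent of the choice of complete arc system on a fixed page (a Pachner-move/triangle-map argument). The missing inequality, and with it the Stein fillable case, comes from an entirely different source: a filtered triangle map shows $\ordn$ is non-decreasing under composing the monodromy with a positive Dehn twist (Legendrian surgery). Since a positive stabilization of $(S,\phi)$ is a Legendrian surgery on $(S',\phi)$ with $\phi$ extended by the identity, one bootstraps: any open book realizing the minimum can be reached from any other via stabilizations, and the drop in $\ordn$ from stabilizing is exactly compensated by the Legendrian surgery inequality. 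Your sketch for the Stein case (``positivity of the monodromy should preclude holomorphic disks with output $\C$'') does not supply this mechanism; the paper instead reduces to $(\#_N S^1\times S^2,\xi_{std})$, where the differential vanishes, and then invokes the surgery inequality. Your overtwisted argument, on the other hand, is essentially the paper's: a non-right-veering arc gives a bigon with $J_+=0$ hitting $\C$.
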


The second bullet point property above follows from the fact that the contact invariant $\ord$ behaves well under Legendrian surgery, giving a map of partially ordered sets from contact manifolds ordered by Stein cobordisms to the set $\Z_{\geq 0} \cup \{\infty\}$ with the usual ordering:

\begin{theorem}
\label{cor:cobord}
The contact invariant $\ord$ is non-decreasing under Legendrian surgery and in particular gives an obstruction to the existence of Stein cobordisms between contact $3$-manifolds. Specifically, if $(M_-,\xi_-)$ and $(M_+,\xi_+)$ are respectively the concave and convex ends of a Stein cobordism, then $\ord (M_-,\xi_-) \leq \ord(M_+,\xi_+)$.
\end{theorem}

Aside from the properties listed in Theorem \ref{thm:main}, the contact invariant $\ord$ behaves well under connected sums. To be more explicit:

\begin{theorem} 
\label{thm:consum}
Let $(M_1,\xi_1)$ and $(M_2,\xi_2)$ be closed contact 3-manifolds. Then their connected sum satisfies $\ord(M_1\# M_2,\xi_1\# \xi_2) = \min\{\ord(M_1,\xi_1), \ord(M_2,\xi_2)\}$.  
\end{theorem}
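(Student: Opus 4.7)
The plan is to exploit the flexibility in Theorem~\ref{thm:multiarcs} to compute $\ord(M_1\#M_2, \xi_1\#\xi_2)$ from an open book decomposition adapted to the connected sum. First, I would pick supporting open books $(\Pg_i, \mon_i)$ for $(M_i, \xi_i)$ with arc collections $\arcs_i$, and note that $(\Pg_1 \natural \Pg_2, \mon_1 \cup \mon_2)$ is a supporting open book for $\xi_1 \# \xi_2$. On this combined page, I would use the arc collection $\arcs_1 \sqcup \arcs_2 \sqcup \arcs_0$, where $\arcs_0$ consists of the two extra arcs needed to meet the cardinality $6(g_1+g_2)-6+4(\scB_1+\scB_2-1)$, localized near the boundary connect sum region so as not to interact with domains supported on either summand.

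Next, I would identify the resulting Heegaard diagram as the connect sum, at the basepoints, of the two individual diagrams arising from $\triple_i=(\Pg_i, \mon_i, \arcs_i)$. By a standard K\"unneth-type decomposition in Heegaard Floer homology, the chain complex $\CCF(-(M_1\#M_2))$ decomposes as a tensor product $\CCF(-M_1) \otimes \CCF(-M_2)$ up to the elementary factor from the extra intersection points at the connect sum region (which leaves the contact class in a single tensor slot). The key claim is that the filtration splits as a sum filtration: any holomorphic disk domain on the combined diagram decomposes as a pair of domains on the two summands, and since $\arcs_i$ lies entirely on the $\Pg_i$ side, the filtration level of the combined domain is the sum of the filtration levels of its pieces. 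Under this splitting, the Ozsv\'ath--Szab\'o contact class satisfies $\OSz(\xi_1 \# \xi_2) = \OSz(\xi_1) \otimes \OSz(\xi_2)$, by the known behavior of the contact class under connected sums.

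With the filtered tensor product structure in hand, the conclusion follows from a K\"unneth-type argument for the induced spectral sequence. The $E_r$ page associated to $\CCF(-(M_1 \# M_2))$ is identified with the tensor product of the individual $E_r$ pages, and the differential obeys the Leibniz rule $d_r(x \otimes y) = (d_r x) \otimes y + (-1)^{|x|} x \otimes (d_r y)$. Applied to $\OSz(\xi_1) \otimes \OSz(\xi_2)$, this shows that the class dies on precisely the page $\min\{\ord(M_1,\xi_1), \ord(M_2,\xi_2)\}$: on any earlier page both terms vanish, while on this page one of $d_r\OSz(\xi_i)$ becomes nonzero, and the two contributions lie in independent positions of the tensor product and cannot cancel. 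If one of the orders is infinite, the same computation yields the other, which is also the minimum by convention.

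The main obstacle I anticipate is verifying that the filtration on the combined complex genuinely splits as a sum filtration, and in particular controlling the contribution of the auxiliary arcs $\arcs_0$ to the filtration. This may require a careful choice of $\arcs_0$ (perhaps after a local stabilization of the open book near the connect sum region) so that any holomorphic domain meeting $\arcs_0$ contributes trivially, or alternatively an argument showing that the spectral order is insensitive to such local modifications. Once the filtered tensor product structure is established, the spectral-sequence argument reduces to the formal computation above.
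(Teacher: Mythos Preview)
Your Künneth strategy is broadly the right idea, and in fact the paper also reduces to a filtered tensor product decomposition. However, there is a genuine error in your spectral-sequence argument, and a structural difference from the paper's proof that is worth noting.

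\textbf{The Leibniz rule argument is backwards.} The contact generator $\vx_\xi$ satisfies $\partial_i\vx_\xi=0$ for all $i\geq 0$, so $d_r\vx_{\xi_i}=0$ on \emph{every} page; it never ``becomes nonzero''. The order $\ord$ records the first page on which $\vx_\xi$ becomes a \emph{boundary}, not the first page on which its differential is nonzero. The correct argument is: if $[\vx_{\xi_1}]$ vanishes on $E^{r+1}$, say $\vx_{\xi_1}=d_r(y)$, then since $d_r\vx_{\xi_2}=0$ we have $\vx_{\xi_1}\otimes\vx_{\xi_2}=d_r(y\otimes\vx_{\xi_2})$, so the tensor class vanishes on $E^{r+1}$ as well; conversely, over $\F$ the K\"unneth isomorphism $E^r(A\otimes B)\cong E^r(A)\otimes E^r(B)$ shows that if both $[\vx_{\xi_i}]$ are nonzero on $E^r$ then so is their tensor product. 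This yields the minimum, but not via the mechanism you describe.

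\textbf{Comparison with the paper.} The paper does not attempt to realize $\ord(M_1\#M_2,\xi_1\#\xi_2)$ directly via Theorem~\ref{thm:multiarcs} on a single arc collection on $\Pg_1\natural\Pg_2$; the bookkeeping with the extra arcs $\arcs_0$ that you flag as an obstacle is indeed awkward. Instead the paper works with $\ordc$ and complete arc sets. The inequality $\leq\min$ follows from Lemma~\ref{lem:incl-arcs}-type subcomplex inclusions. For $\geq\min$, the paper takes an open book $\tilde\B$ realizing $\ordc(M_1\#M_2)$, passes to a common stabilization $\B$ of $\tilde\B$ and $\B_1\#\B_2$, strips off the stabilizing Dehn twists to obtain $\B'$, and uses Legendrian surgery monotonicity (Theorem~\ref{thm:cobord}) to get $\ordc(M_1\#M_2)=\bbord(\B)\geq\bbord(\B')$. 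On $\B'$ one can choose a complete arc set containing the separating arc, so that $\CCF$ cleanly splits as a filtered tensor product with trivial $(\F_{(0)}\oplus\F_{(1)})$ factors. This manoeuvre is what replaces your attempt to control $\arcs_0$ directly.
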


\noindent This leads to existence of a family of monoids $\ord^k(S)$ in the mapping class group ${\rm Mod}(S,\partial S)$: $\phi\in{\rm Mod}(S,\partial S)$ belongs to $\ord^k(S)$ if and only if $\ord\geq k$ for the contact 3-manifold specified by the open book decomposition $(S,\phi)$ (Corollary \ref{cor:monoid}).

Theorem~\ref{thm:consum} fits into a broader pattern of similar contact connected sum results. Loosely, any measure of rigidity of $(M_1\# M_2,\xi_1\# \xi_2)$, for example all types of symplectic fillability, having a non-vanishing Ozsv\'ath--Szab\'o contact class, or tightness, is the weaker of that property for $(M_1,\xi_1)$ or $(M_2,\xi_2)$ (\cite{Eliashberg}, \cite{CiEli}, \cite{OzsvathSzabo4}, \cite{Colin}).

It should be noted that our invariant can be thought of as a Heegaard Floer analog of Latschev and Wendl's algebraic torsion \cite{LatschevWendl} defined for contact manifolds of arbitrary dimension in the context of symplectic field theory (SFT).  An analogous version for contact $3$-manifolds was defined by Hutchings in the context of embedded contact homology (ECH) in \cite[Appendix]{LatschevWendl}. In particular, to a closed oriented 3-manifold $M$, a nondegenerate contact $1$-form $\lambda$ on $M$, and a generic almost complex structure $J$ on $\mathbb{R} \times M$ as needed to define the ECH chain complex, Hutchings associates a number in $\Z_{\geq0}\cup\{\infty\}$. The latter is shown to vanish for overtwisted contact structures for all choices of $\lambda$ and $J$, and can be used to obstruct exact symplectic cobordisms. 
Our initial definitions follow the ideas of Hutchings's construction, ported to the setting of Heegaard Floer homology (see \cite{KMVHMW} for more on this).

\subsection*{Future considerations} 
In upcoming work in progress \cite{KMVHMW2}, we introduce a method to detect non-vanishing of spectral order, hence tightness, of a contact structure. We apply this method to a family of contact structures with vanishing Ozsv\'ath--Szab\'o contact class. Furthermore, we compute upper bounds on the spectral order of these contact structures and these upper bounds span the range of all positive integers. Next, we would like to show that there is an increasing sequence of positive integers that provides lower bounds on the spectral order of our family of contact structures. These computations would resolve the following conjecture.
\begin{conj}
\label{conj:seqofvalues}
An infinite sequence of distinct positive integers is realized by the spectral order of an infinite family of contact $3$-manifolds with vanishing Ozsv\'ath--Szab\'o contact class.
\end{conj}
\noindent Such a family of examples would provide substance to Theorems \ref{cor:cobord} and \ref{thm:consum}.

A more conceptual question concerns the potential of a converse to the first bullet point of Theorem \ref{thm:main}:
\begin{ques}
\label{Q1}
Suppose that $(M,\xi)$ has vanishing Ozsv\'{a}th--Szab\'{o} contact class. Does $\ord(M,\xi)=0$ imply that $\xi$ is overtwisted?
\end{ques}
\noindent An affirmative answer to this question would have far-reaching consequences in contact and symplectic geometry. Most important of all, it would imply that in dimension three `algebraically overtwisted' contact structures, i.e. those for which all algebraic invariants defined via counting pseudo-holomorphic curves vanish, are exactly those for which an h-principle holds, so are classified by the algebraic topology of their underlying plane fields. All such algebraic invariants are known to vanish for overtwisted structures, due to existence of pseudo-holomorphic curves with minimal topological complexity in the symplectization. Spectral order on the one hand quantifies topological complexity of chains of pseudo-holomorphic curves, and on the other gives a potential interpretation of \emph{consistency} of an open book decomposition (cf. \cite{Wand}), a combinatorial condition equivalent to tightness of the supported contact structure, in the context of pseudo-holomorphic curves. Combined with its computability, these properties of spectral order increase our chances of answering the above question.

Note also that an affirmative answer to Question \ref{Q1} along with the non-decreasing behavior of spectral order under Legendrian surgery, would provide an alternative and more conceptual proof of the following theorem, which has recently been proved by the last author in \cite{Wand2}:
\begin{theorem}
\label{thm:legtight}
Let $\xi$ be a tight contact structure on $M$, and $K\subset M$ be a null-homologuous Legendrian knot. Then, contact $(-1)$-surgery on $K$ produces a 3-manifold with a tight contact structure.
\end{theorem}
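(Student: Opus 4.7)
The plan is to argue by contradiction, assuming an affirmative answer to Question \ref{Q1}. Suppose $\xi$ is tight on $M$, let $K\subset M$ be a null-homologous Legendrian knot, and let $(M',\xi')$ denote the contact 3-manifold obtained from $(M,\xi)$ by contact $(-1)$-surgery along $K$. Assume for contradiction that $\xi'$ is overtwisted. The goal is then to derive a contradiction with the tightness of $\xi$.

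First, I would invoke the standard fact (due to Eliashberg--Weinstein) that contact $(-1)$-surgery along a Legendrian knot coincides with Legendrian surgery, and is realized by attaching a Weinstein 2-handle to the symplectization of $(M,\xi)$. This produces a Stein cobordism $(W,J)$ with concave end $(M,\xi)$ and convex end $(M',\xi')$. Thus the setup of Theorem \ref{thm:cobord} applies.

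Next, I would chain together the properties of $\ord$ established earlier in the paper. By the first bullet of Theorem \ref{thm:main}, overtwistedness of $\xi'$ yields $\ord(M',\xi')=0$. Then Theorem \ref{thm:cobord} applied to the Stein cobordism $W$ gives
\begin{equation*}
\ord(M,\xi)\;\leq\;\ord(M',\xi')\;=\;0,
\end{equation*}
so $\ord(M,\xi)=0$. At this point, an affirmative answer to Question \ref{Q1} forces $\xi$ to be overtwisted, contradicting the hypothesis that $\xi$ is tight. This contradiction shows that $\xi'$ must be tight.

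The main obstacle, of course, is that the argument is genuinely conditional: it purchases Theorem \ref{thm:legtight} only at the price of Question \ref{Q1}. The two inputs from the present paper (monotonicity under Stein cobordism and vanishing in the overtwisted case) are already in place, so the conceptual content of the argument lies entirely in the Question \ref{Q1} implication ``$\ord=0\Rightarrow$ overtwisted''. Until that implication is established, the proof above should be presented as a conditional one, motivating Question \ref{Q1} rather than superseding the direct combinatorial proof given in \cite{Wand2}.
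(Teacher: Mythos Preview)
Your proposal is essentially identical to what the paper itself presents. Note that the paper does not actually prove Theorem~\ref{thm:legtight}; it cites \cite{Wand2} for the proof and only sketches, in the paragraph immediately following the statement, how an affirmative answer to Question~\ref{Q1} together with Theorems~\ref{thm:main} and~\ref{thm:cobord} would yield an alternative, conditional proof. Your argument is precisely that sketch, phrased contrapositively as a proof by contradiction rather than directly (the paper runs it forward: tight $\Rightarrow \ord>0$ via Question~\ref{Q1}, then $\ord(M',\xi')>0$ via Theorem~\ref{thm:cobord}, then $\xi'$ tight via the contrapositive of the first bullet of Theorem~\ref{thm:main}). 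You also correctly flag that the argument is conditional on Question~\ref{Q1}, exactly as the paper does.
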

\noindent To be more explicit, if $(M,\xi)$ is a closed contact 3-manifold where $\xi$ is a tight contact structure, and $(M',\xi')$ is obtained from $(M,\xi)$ via Legendrian surgery, then the fact that $\ord(M,\xi)>0$ would imply that $\ord(M',\xi')>0$ by the second bullet point of Theorem \ref{thm:main}, which in turn would imply that $\xi'$ is tight by Theorem \ref{thm:main}.

Another question of interest is related to generalizing our invariant to compact contact 3-manifolds with convex boundary. In this regard, our construction of a filtered chain complex out of the Heegaard Floer chain complex readily generalizes to the case of partial open book decompositions introduced in \cite{HKM4}. This allows us to extend the definition of spectral order (Definition \ref{def:algtor}) to compact contact 3-manifolds with convex boundary. This was independently observed by Juh\'asz and Kang who used it to find an upper bound on the spectral order for a closed contact 3-manifold that contains a Giroux torsion domain \cite{JuhaszKang}. Among other things, we would like to compare $\ord$ to Wendl's \emph{planar torsion} \cite{Wendl}. As is stated in \cite[Theorem 6]{LatschevWendl}, planar torsion provides an upper bound to Latschev and Wendl's algebraic torsion. Moreover, planar torsion detects overtwistedness. One could expect a similar relationship between spectral order and Wendl's planar torsion. These are the content of another work in progress by the authors \cite{KMVHMW3}.

\begin{ques}
\label{Q2}
Suppose that the closed contact $3$-manifold $(M,\xi)$ has planar $k$-torsion. Does it imply $\ord(M,\xi)\leq k$?
\end{ques}

\subsection*{Organization} The organization of this article is as follows: 
\begin{itemize}\leftskip0.275in
\item[{\bf Section \ref{sec:definition}}] We provide the definitions required throughout the article, leading to the definition of our contact invariant $\ord$. These include a preliminary version of the latter, denoted $\ordn$, which \emph{a priori} depends on the choices made to define it. 
\item[{\bf Section \ref{sec:independence}}] This section investigates dependence of $\ordn$ on various choices made in its definition. Among these are a choice of the monodromy of an open book decomposition in its isotopy class and a choice of a pairwise disjoint properly embedded collection of arcs on a page of an open book decomposition.
\item[{\bf Section \ref{sec:proof}}] We exhibit several properties of our contact invariant $\ord$, and in doing so prove Theorems \ref{thm:main}, \ref{cor:cobord}, and \ref{thm:consum}.
\item[{\bf Section \ref{sec:example}}] We present a contact structure with non-vanishing Ozsv\'ath--Szab\'o contact class but with finite $\ord$. This implies, by Theorem \ref{thm:main}, that this contact structure is not Stein fillable, a property of this contact structure that was not previously detected by other methods.
\end{itemize}

\subsection*{Acknowledgements}
The seeds of this project were sown at the ``\emph{Interactions between contact symplectic topology and gauge theory in dimensions 3 and 4}" workshop at Banff International Research Station (BIRS) in 2011. The first three authors would like to thank BIRS and the organizers of that workshop for creating a wonderful atmosphere for collaboration. We also thank John Baldwin for some helpful conversations, Michael Hutchings for generously sharing his thoughts on the ECH analog of algebraic torsion, and Robert Lipshitz for several very helpful correspondences. A significant portion of this work was completed while the first author was a member and the last three authors were visitors at the Institute for Advanced Study (IAS) in Princeton. We thank IAS faculty, particularly Helmut Hofer, and staff for their hospitality. We are also very grateful to the American Institute of Mathematics (AIM). This project benefited greatly from the AIM SQuaRE program.  
\section{Definitions}
\label{sec:definition}
\subsection{Background}
\label{sec:def-background}
To set the stage, let $M$ be a closed, connected, and oriented $3$-manifold endowed with a co-oriented contact structure $\xi$. It is understood that the orientation on $M$ is induced by $\xi$. A celebrated theorem of Giroux states that there is a 1--1 correspondence between contact structures up to isotopy and open book decompositions up to positive stabilization \cite{Giroux}. An abstract open book decomposition of $M$ is a pair  $(S,\phi)$ where $S$ is a compact oriented surface of genus $g$ with $\textsc{b}$ boundary components, called the \emph{page}, and $\phi$ is an orientation preserving diffeomorphism of $S$ which restricts to identity in a neighborhood of the boundary, called the \emph{monodromy}. The manifold $M$ is diffeomorphic to $S\times[0,1]/\sim$ where $(p,1)\sim(\phi(p),0)$ for any $p\in S$ and $(p,t)\sim(p,t')$ for any $p\in\partial S$ and $t,t'\in[0,1]$. The open book decomposition is said to support the contact structure $\xi$ if there exists a 1-form $\lambda$ such that $\xi=ker(\lambda)$, $\lambda|_{\partial S}>0$, and $d\lambda|_S>0$.

Now fix an abstract open book decomposition $(S,\phi)$ of $M$ supporting $\xi$ and a pairwise disjoint properly embedded collection of arcs $\arcs=\{a_1,\dots,a_\N\}$ on $S$ that contains a basis, that is, a subcollection of arcs cutting $S$ into a polygon. This arc collection together with the monodromy $\phi$ defines a Heegaard diagram $(\Sigma,\{\beta_1,\dots,\beta_\N\},\{\alpha_1,\dots,\alpha_\N\})$ for $-M$ as in \cite[\textsection 3.1]{HKM}. To be more explicit, let $\barcs=\{b_1,\dots,b_\N\}$ be a collection of arcs on $S$ where $b_i$ is isotopic to $a_i$ via a small isotopy satisfying the following conditions:
\begin{itemize}\leftskip-0.35in
\item The endpoints of $b_i$ are obtained from the endpoints of $a_i$ by pushing along $\partial S$ in the direction of the boundary orientation,
\item $a_i$ intersects $b_i$ transversally at one point, $x_i$, in the interior of $S$,
\item Having fixed an orientation of $a_i$, there is an induced orientation on $b_i$, and the sign of the oriented intersection $a_i\cap b_i$ is positive. (see Figure~\ref{fig:standardarcs}.)
\end{itemize}
\begin{figure}[h]
\centering
\includegraphics[width=2.5in]{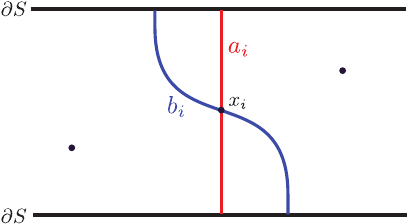}
\caption{The arcs $a_i$ and $b_i$ on the surface $S$.}
\label{fig:standardarcs}
\end{figure}
\noindent Then $\Sigma= S\times\{\frac{1}{2}\}\cup_{\partial S}-S\times\{0\}$, $\alpha_i=a_i\times\{\frac{1}{2}\}\cup a_i\times\{0\}$, and $\beta_i=b_i\times\{\frac{1}{2}\}\cup\phi(b_i)\times\{0\}$. As a parenthetical remark, the Heegaard diagram $(-\Sigma,\{\alpha_1,\dots,\alpha_\N\},\{\beta_1,\dots,\beta_\N\})$ also describes the manifold $-M$, and we may sometimes prefer to use this diagram in figures.

With the preceding understood, we recall the definition of the Heegaard Floer chain complex $(\cfhat(\Mba),\dhat)$. In doing so, we adopt Lipshitz's cylindrical reformulation of Heegaard Floer homology \cite{Lipshitz}. The definition also requires the choice of basepoints $\base\subset\Sigma\smallsetminus\bigcup_{i\in\{1,\dots,\N\}}(\alpha_i\cup\beta_i)$. In the present context, this is done according to the convention in \cite[\textsection 3.1]{HKM}. To be more explicit, place a single basepoint in every connected component of $S\smallsetminus\bigcup_{i\in\{1,\dots,\N\}}a_i$ outside the small strips between $a_i$ and $b_i$ (see Figure \ref{fig:standardarcs}). Following Lipshitz, the chain group $\cfhat(\Mba)$ is freely generated over $\F:=\Z/2\Z$ by \emph{I-chord collections} $\vx:=\x\times[0,1]$ specified by unordered $\N$-tuples of points in $\Sigma$ of the form $\x=\{x_1,\dots,x_\N\}$ where $x_i\in\alpha^{}_i\cap\beta_{\sigma(i)}$ for some element $\sigma$ of the symmetric group $\symgrp$. Given a generic almost complex structure $\acs$ on $\Sigma\times[0,1]\times\R$ satisfying conditions (\textbf{J1})--(\textbf{J5}) in \cite[\textsection 1 p. 959]{Lipshitz}, the differential $\dhat$ on $\cfhat(\Mba)$ is defined to be the endomorphism of $\cfhat(\Mba)$ sending a generator $\vx$ to
\[\sum_{\y}\sum_{A\in\pitwo(\vx,\vy),\;ind(A)=1} n(\vx,\vy;A)\vy.\]
Here $\pitwo(\vx,\vy)$ denotes the set of relative homology classes of continuous maps from a Riemann surface with boundary and boundary punctures into $\Sigma\times[0,1]\times\R$ such that it maps the boundary of the surface into $\boldsymbol{\alpha}\times\{0\}\times\R\cup\boldsymbol{\beta}\times\{1\}\times\R$, it converges to $\vx$ and $\vy$ at its punctures, and it has trivial homological intersection with $\{z\}\times[0,1]\times\R$. Meanwhile, \emph{ind(A)} denotes the index of a class $A\in\pitwo(\vx,\vy)$ (see \cite[Definition 4.4]{Lipshitz}), and $n(\vx,\vy;A)$ is a signed count, modulo $\R$-translation, of $\acs$-holomorphic curves in $\Sigma\times[0,1]\times\R$ satisfying conditions (\textbf{M0})--(\textbf{M6}) in \cite[\textsection 1 p. 960]{Lipshitz} and representing the class $A$. The latter is guaranteed to be finite if we choose the monodromy $\phi$ appropriately in its isotopy class so as to make the multipointed Heegaard diagram $(\Sigma,\bsb,\bsa,\base)$ \emph{admissible}. A multipointed Heegaard diagram is admissible if every non-trivial periodic domain has both positive and negative coefficients (see \cite[Definition 5.1]{Lipshitz}).

\begin{rmk}
Even though Lipshitz carried out his construction of a cylindrical reformulation of Heegaard Floer homology in the case $\N=2g+\textsc{b}-1$ (in other words, the case with one basepoint), the details of his construction and especially the results in \cite[\textsection 4 and \textsection 10]{Lipshitz} carry over to the multipointed case but for cosmetic changes.
\end{rmk}
\subsection{The filtered chain complex}
\label{sec:def-complex}
Next we build a filtered chain complex out of $(\cfhat(\Mba),\dhat)$. To do this, we adopt Hutchings's recipe in \cite[\textsection 6]{Hutchings3}. Given a pair of generators $\vx$ and $\vy$, define a function $J_+$ on $\pitwo(\vx,\vy)$ by
\begin{equation}
\label{eq:jplus-prod}
J_+(A):=\mu(\DA)-2e(\DA)+|\x|-|\y|\footnote[$\dagger$]{The interested reader may refer to \cite{KMVHMW} to see how the authors originally came up with this formula.},
\end{equation}
where $\DA$ is the \emph{domain} in the pointed Heegaard diagram $(\Sigma,\boldsymbol{\beta},\boldsymbol{\alpha},z)$ representing a class $A\in\pitwo(\vx,\vy)$, $\mu(\DA)$ is the \emph{Maslov index} of $\DA$ as in the traditional setting of \cite{OzsvathSzabo}, $e(\DA)$ is the \emph{Euler measure} of $\DA$ (see \cite[\textsection 4.1 p. 973]{Lipshitz} for definition), and $|\cdot|$ denotes the number of cycles in the element of the symmetric group $\symgrp$ associated to a given generator following the convention described above in Section \ref{sec:def-background}. Since the Maslov index and Euler measure are additive under concatenation of domains, so is $J_+$. More precisely, for any $A\in\pitwo(\vx,\vy)$ and $A'\in\pitwo(\vy,\vz)$, we have 
\[J_+(A+A')=J_+(A)+J_+(A').\]
Now suppose that $A\in\pitwo(\vx,\vy)$ is represented by a $\acs$-holomorphic curve $C_L$ in $\Sigma\times[0,1]\times\R$ satisfying conditions (\textbf{M0})--(\textbf{M6}) in \cite[\textsection 1]{Lipshitz}. Then by \cite[Proposition 4.2]{Lipshitz} (cf. \cite[Proposition $4.2'$]{Lipshitz2}),
\begin{equation}
\label{eq:euler-lip}
\chi(C_L)=\N-n_{\x}(\DA)-n_{\y}(\DA)+e(\DA).
\end{equation} 
Here, $n_p(\DA)$ denotes the \emph{point measure}, namely, the average of the coefficients of $\DA$ for the four regions with corners at $p\in\alpha_i\cap\beta_j$. Meanwhile, Lipshitz's formula for the Maslov index of domains \cite[Corollary 4.10]{Lipshitz} (cf.  \cite[Proposition $4.8'$]{Lipshitz2}) asserts that 
\begin{equation}
\label{eq:maslov}
\mu(\DA)=n_{\x}(\DA)+n_{\y}(\DA)+e(\DA).
\end{equation}
Combining, \eqref{eq:euler-lip} and \eqref{eq:maslov}, we obtain
\[\mu(\DA)-2e(\DA)=-\chi(C_L)+\N,\]
and hence \eqref{eq:jplus-prod} can be rewritten as 
\begin{equation}
\label{eq:j-plus-lip}
J_+(A)=-\chi(C_L)+\N+|\x|-|\y|.
\end{equation} 
With the preceding understood, consider the smooth compact oriented surface $C$ obtained from the compactification of $C_L$ by attaching 2-dimensional 1-handles along pairs of points in $\alpha_i\times\{0\}\times\R\cap C_L$ and $\beta_i\times\{1\}\times\R\cap C_L$ for each $i=1,\dots,\N$, and then smoothing. Then $\chi(C)=\chi(C_L)-\N$, and $|\x|$ (resp. $|\y|$) is equal to the number of boundary components of $C$ arising from the $I$-chord $\vx$ (resp. $\vy$). Hence, we can further rewrite \eqref{eq:j-plus-lip} as
\begin{equation}\label{eq:jplus}
J_+(A)=\sum_{C_j\subset C} (2g_j-2 +2|\x_{j}|),
\end{equation}
where each $C_j$ denotes a connected component of $C$, $g_j$ denotes the genus of $C_j$, and each $\x_{j}\subset\x$ denotes the maximal subcollection of points in $\x$ such that $\x_j\times[0,1]$ lies on the boundary of the component $C_j$. Note that each connected component of $C$ has non-empty intersections with the $I$-chord collections specified by $\x$ and $\y$ since each connected component of $C_L$ has non-empty negative and positive ends. Therefore, it follows from \eqref{eq:jplus} that $2\,|\,J_+(A)$, and $J_+(A)\geq 0$.

\begin{rmk}
If there exists an embedded $\acs$-holomorphic curve $C_L$ representing the class $A$, then the Maslov index of $\DA$ agrees with the Fredholm index of $C_L$. For Maslov index-1 domains, we prefer to use the following equivalent formula:
\begin{equation}\label{eq:jplus-2}
J_+(A)=2[n_{\x}(\DA)+n_{\y}(\DA)]-1+|\x|-|\y|.
\end{equation}
\end{rmk}

\subsection{An analog of algebraic torsion}
\label{sec:def-AT}
Following \cite[Appendix]{LatschevWendl}, we decompose the Heegaard Floer differential as
\[\dhat=\partial_0+\partial_1+\cdots+\partial_\ell+\cdots,\]
where $\partial_\ell$ counts $\acs$-holomorphic curves with $J_+=2\ell$ and having empty intersection with $\{z\}\times[0,1]\times\R$. Since $J_+$ is additive under gluing of $J$-holomorphic curves, the above decomposition induces a spectral sequence with pages 
\[E^k(S,\phi,\arcs;\acs)=H_\ast(E^{k-1}(S,\phi,\arcs;\acs),d_{k-1}).\] 
To be more explicit, consider the $\Z$-graded direct sum 
\[\CCF(S,\phi,\arcs):=\cfhat(\Mba)\otimes_\F \F[t,t^{-1}]\] endowed with the endomorphism $\widehat{\partial}$ defined by 
\[\widehat{\partial}(\sum_{i\in\Z}c_it^i):=\sum_{i\in\Z}\big(\sum_{\ell\in\Z}(\partial_\ell c_i) t^{i-\ell}\big).\] 
Here $c_i\neq0$ for only finitely many $i\in\Z$. Note that the additivity property of $J_+$ implies that 
\[\sum_{i+j=\ell}\partial_i\circ\partial_j=0,\]
for any $\ell\geq0$; hence, $\widehat{\partial}\circ\widehat{\partial}=0$ making $(\CCF(S,\phi,\arcs),\widehat{\partial})$ into a filtered chain complex where the $p^{th}$ filtration level 
\[\mathcal{F}^p(S,\phi,\arcs)=\{\sum_{i\leq p}c_it^{i}\;|\;c_i\in\cfhat(\Mba)\}.\] 
Then $(E^k(S,\phi,\arcs;\acs),d_k)$ is the spectral sequence associated to this filtered chain complex where $d_k$ is the restriction of $\widehat{\partial}$ to $E^k(S,\phi,\arcs;\acs)$. To be more explicit, let $A_p^r$ denote the subcomplex defined by 
\[A_p^r=\{c\in \mathcal{F}^p(S,\phi,\arcs)\,|\,\widehat{\partial}c\in \mathcal{F}^{p-r}(S,\phi,\arcs)\}.\]
Then,
\[E_p^k(S,\phi,\arcs;\acs)=\frac{A_p^k}{\widehat{\partial}A_{p+k-1}^{k-1}+A_{p-1}^{k-1}}.\]
A simple computation shows that $E_{p}^k(S,\phi,\arcs;\acs)$ is isomorphic to
\[\frac{\pZ^k(S,\phi,\arcs;\acs)}{\pB^k(S,\phi,\arcs;\acs)},\]
where
\begin{eqnarray*}
\pZ^k(S,\phi,\arcs;\acs)&:=&\{c\in\cfhat(\Mba)\,|\,\exists c_i\in\cfhat(\Mba)\;{\rm for}\,i=1,\dots,k-1\\
&&\hspace{1.2in}{\rm s.t.}\;\del_0c=0\,{\rm and}\,\del_jc=\sum_{i=0}^{j-1}\del_ic_{j-i}\;{\rm for}\;0<j<k\},\\
\pB^k(S,\phi,\arcs;\acs)&:=&\{\sum_{i=0}^{k-1}\del_ib_{i}\,|\,b_i\in\cfhat(\Mba)\;{\rm and}\,\sum_{i-\ell=j}\del_\ell b_i=0\;{\rm for}\;0<j<k\},
\end{eqnarray*}
and since $\mathcal{F}^p(S,\phi,\arcs)\cong\mathcal{F}^{p-1}(S,\phi,\arcs)$, the above quotient completely determines the pages of the spectral sequence.

By \cite[Theorem 3.1]{HKM}, the Heegaard Floer generator specified by the set of distinguished points $\x_\xi=\{x_1,\dots,x_\N\}$ indicated in Figure \ref{fig:standardarcs} represents the Ozsv\'ath--Szab\'o contact class $\OSz\in\hfhat(-M)$, and it satisfies $\del_i\vx_\xi=0$ for all $i\geq0$. This is because there is no Fredholm index-1 $\acs$-holomorphic curve in $\Sigma\times[0,1]\times\R$ satisfying conditions (\textbf{M0})--(\textbf{M6}) in \cite[\textsection 1]{Lipshitz} with $\vx_\xi$ at its negative punctures and having empty intersection with $\{z\}\times[0,1]\times\R$. Hence, $\vx_\xi$ represents a cycle in $E^k(S,\phi,\arcs;\acs)$ for all $k\geq1$, and it is natural to ask how far into the spectral sequence does $\vx_\xi$ survive.
\begin{defn}
\label{def:algktor}
Define $\ordn(S,\phi,\arcs;\acs)$ to be the smallest non-negative integer $k$ such that the generator $\vx_\xi$ represents the trivial class in $E^{k+1}(S,\phi,\arcs;\acs)$. 
\end{defn}
Ideally, one would like to show that $\ordn(S,\phi,\arcs;\acs)$ does not depend on choices of $(S,\phi,\arcs)$ and $\acs$. This is not true in general. For example, consider the closed contact 3-manifold where the contact structure is supported by the open book decomposition $(S,\phi)$ where $S$ is a 4-holed sphere and $\phi$ is the product of Dehn twists depicted in Figure \ref{fig:failure_A}. Using the basis of arcs $\arcs$ shown in Figure \ref{fig:failure_A}, and a generic \emph{split} almost complex structure $\acs$, we observe that the shaded domain $\D$ in Figure \ref{fig:failure_B} is sufficient for the vanishing of the Ozsv\'ath--Szab\'o contact class. A simple computation shows that $J_+(\D)=~2$. Therefore, $\vx_\xi$ represents the trivial class in $E^2(S,\phi,\arcs;\acs)$, and $\ordn(S,\phi,\arcs;\acs)\leq1$. Furthermore, using the symmetry of the open book decomposition and the choice of the arc basis, one can argue as in \cite{KMVHMW2} that $\ordn(S,\phi,\arcs;\acs)=1$. However, the contact structure supported by the open book decomposition $(S,\phi)$ is overtwisted, which can be seen after a sequence of positive stabilizations to reveal the overtwisted disk (see \cite{Wand} for an explicit algorithm). Then there exists another open book decomposition $(S',\phi')$ and a basis of arcs $\arcs'$ on $S'$ for which $\ordn(S',\phi',\arcs';\acs')=0$ using a generic split almost complex structure $\acs'$ (see proof of Theorem \ref{thm:vanishing}). As a result, $\ordn$ is not independent of these choices. 

\begin{figure}[h]
\centering
\begin{subfigure}[b]{0.4\linewidth}
\centering
\includegraphics[width=2in]{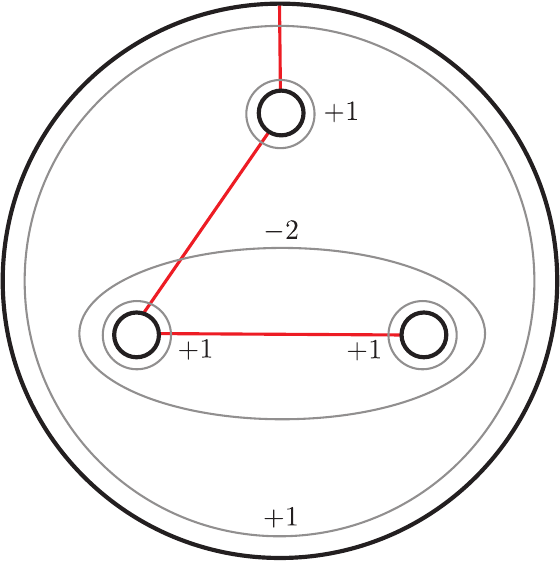}
\caption{}
\label{fig:failure_A}
\end{subfigure}%
\begin{subfigure}[b]{0.4\linewidth}
\centering
\includegraphics[width=2in]{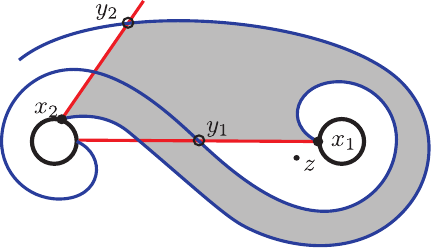}
\caption{}
\label{fig:failure_B}
\end{subfigure}
\centering
\caption{On the left is an open book decomposition $(S,\phi)$ supporting an overtwisted contact structure and a basis of arcs depicted in red. On the right is a Maslov index-1 holomorphic domain with $J_+=2$ in the $S\times\{0\}$ half of the Heegaard diagram $(-\Sigma,\bsa,\bsb)$.}
\label{fig:failure}
\end{figure}

\begin{defn} 
\label{def:algtor}
Let $(M,\xi)$ be a closed contact $3$-manifold. Then define the \emph{spectral order}
\[\ord(M,\xi):=\min\{\ordn(S,\phi,\arcs;\acs)\},\]
where the minimum is taken over all data $(S,\phi,\arcs;\acs)$ such that $(S,\phi)$ is an open book decomposition of $M$ supporting $\xi$, $\arcs$ is pairwise disjoint properly embedded collection of arcs on $S$ that contains a basis, and $\acs$ is a generic almost complex structure on $\Sigma\times[0,1]\times\R$ satisfying conditions (\textbf{J1})--(\textbf{J5}) in \cite[\textsection 1]{Lipshitz}. 
\end{defn}
It follows immediately that Definition \ref{def:algtor} yields an invariant of contact structures. With the definition of our contact invariant in place, the first bullet point of Theorem \ref{thm:main} follows without much effort.
\begin{theorem}
\label{thm:vanishing}
Let $\xi_{\textsc{ot}}$ be an overtwisted contact structure on a closed $3$-manifold $M$. Then $\ord(M,\xi_{\textsc{ot}})=0$. 
\end{theorem}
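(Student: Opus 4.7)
The plan is to leverage that $\ord(M,\xi)$ is defined as a minimum over all supporting open book decompositions, arc collections containing a basis, and generic almost complex structures; thus it suffices to exhibit a single quadruple $(S,\phi,\arcs;\acs)$ for which $\ordn(S,\phi,\arcs;\acs)=0$. By Definition \ref{def:algktor}, $\ordn=0$ exactly when $\vx_\xi$ is null-homologous in $E^1=H_\ast(\cfhat(\Mba),\partial_0)$, i.e.\ when there is a chain $c\in\cfhat(\Mba)$ with $\partial_0 c=\vx_\xi$. My task is therefore to produce such a $c$ in a sufficiently nice Heegaard diagram.

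The starting point is the Honda--Kazez--Mati\'{c} characterization: every overtwisted $(M,\xi_{\textsc{ot}})$ admits a supporting open book $(S,\phi)$ whose monodromy $\phi$ fails to be right-veering, witnessed by a properly embedded arc $a_1\subset S$ along which $\phi$ veers to the left at one endpoint. Using the positive-stabilization algorithm of \cite{Wand}, I would further stabilize so that the overtwisted disk is revealed in a combinatorially explicit form: the arc $a_1$ and its translate $b_1$ can be arranged so that $\phi(b_1)$ crosses $a_1$ transversely just once in the interior of $S$, bounding an embedded bigon region $\Delta\subset S$ whose two corners are precisely the standard point $x_1\in a_1\cap b_1$ and a new point $y_1\in a_1\cap\phi(b_1)$. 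Extending $a_1$ to a complete set of arcs $\arcs$ disjoint from $\Delta$ yields a multipointed Heegaard diagram $(\Sigma,\bsb,\bsa,\base)$ in which $\alpha_1$ and $\beta_1$ cobound an embedded Maslov index-$1$ bigon $\D\subset\Sigma$ disjoint from $\base$. Let $\y$ denote the generator obtained from $\vx_\xi$ by replacing $x_1$ with $y_1$, so that $\D$ represents a class in $\pitwo(\y,\vx_\xi)$.

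Applying \eqref{eq:jplus-2} to $\D$, and noting that $\vx_\xi$ and $\y$ both correspond to the identity permutation in $\symgrp$ so that $|\y|-|\vx_\xi|=0$, a short computation gives
\[J_+(\D)=2\bigl[n_{\y}(\D)+n_{\vx_\xi}(\D)\bigr]-1=2\!\left(\tfrac{1}{4}+\tfrac{1}{4}\right)-1=0.\]
With a generic split almost complex structure $\acs$, the Riemann Mapping Theorem provides a unique holomorphic representative of $\D$ modulo $\R$-translation; moreover, the basepoint placement convention of \cite[\textsection 3.1]{HKM}, together with the disjointness of $\arcs$ from $\Delta$, ensures that $\D$ is the only $J_+=0$, Maslov index-$1$ domain with positive end at $\vx_\xi$ emanating from $\y$. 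Therefore $\partial_0\y=\vx_\xi$, so $\vx_\xi$ is a $\partial_0$-boundary, $[\vx_\xi]=0$ in $E^1(S,\phi,\arcs;\acs)$, and $\ordn(S,\phi,\arcs;\acs)=0$. Definition \ref{def:algtor} then yields $\ord(M,\xi_{\textsc{ot}})=0$.

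The main obstacle is arranging, via Wand's stabilization algorithm and a judicious choice of $\arcs$, that the overtwisted bigon is genuinely isolated in the Heegaard diagram, with no competing $J_+=0$, Maslov index-$1$ domain contributing to $\partial_0\y$ and reintroducing $\vx_\xi$ on the wrong side of the equation. Once this combinatorial picture is in hand the remaining pseudo-holomorphic bookkeeping is routine, and essentially mirrors the original Honda--Kazez--Mati\'{c} proof that $\hat c(\xi_{\textsc{ot}})=0$.
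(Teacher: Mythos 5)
Your proposal is correct and follows essentially the same route as the paper: invoke the Honda--Kazez--Mati\'{c} characterization of overtwistedness via a non-right-veering monodromy, exhibit a bigon domain $\D$ realizing $\dhat\vy=\vx_\xi$, compute $J_+(\D)=0$ from \eqref{eq:jplus-2}, and conclude $\ordn=0$, hence $\ord=0$ by minimality. Two small remarks. First, the detour through Wand's stabilization algorithm is unnecessary: \cite[Lemma 3.2]{HKM} already produces, from a non-right-veering $\phi$, a basis of arcs in whose Heegaard diagram there is \emph{exactly one} Maslov index-$1$ holomorphic domain out of $\y$ --- an embedded bigon --- and it lands on $\vx_\xi$. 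Second, the step you flag as the ``main obstacle'' (that $\D$ is the unique index-$1$, $J_+=0$ domain emanating from $\y$, so that $\partial_0\y=\vx_\xi$ with no extraneous terms) is precisely what that lemma supplies; having the bigon be the only domain from $\y$ \emph{into} $\vx_\xi$ alone would not suffice, since $\partial_0\y$ could still carry other summands and one would then need $\vx_\xi$ in the full image of $\partial_0$. With that citation in place of Wand's algorithm, your argument closes and coincides with the paper's.
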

\begin{proof}
Note that an overtwisted contact structure is supported by an open book decomposition $(S,\phi)$ where the monodromy $\phi$ is not right-veering \cite[Theorem 1.1]{HKM2}. One can find a basis of arcs $\arcs$ on $S$ so that in the corresponding Heegaard diagram $\widehat{\partial}_{\textsc{hf}}\vy=\vx_{\xi_{\textsc{ot}}}$ where $\y=\{y_1,x_2,\dots,x_\G\}$ and there is exactly one Maslov index-1 holomorphic domain $\D$, a bigon, that contributes to the differential \cite[Lemma 3.2]{HKM} as defined by a split complex structure on $\Sigma\times[0,1]\times\R$. Therefore, $n_{\y}(\DA)=\frac{1}{4}$, $n_{\x_{\xi_{\textsc{ot}}}}(\D)=\frac{1}{4}$, $|\y|=\G$, and $|\x_{\xi_{\textsc{ot}}}|=\G$. Applying \eqref{eq:jplus-2}, we find $J_+(\D)=0$. As a result, $\ord(M,\xi_{\textsc{ot}})=0$.
\end{proof}

\section{Dependence on choices}
\label{sec:independence}

This section investigates dependence of $\ordn(S,\phi,\arcs;\acs)$ on a choice of generic almost complex structure $\acs$ on $\Sigma\times[0,1]\times\R$, where $\Sigma= S\times\{\frac{1}{2}\}\cup_{\partial S}-S\times\{0\}$, a choice of the monodromy $\phi$ in its isotopy class, and how it changes under certain modifications of arc collections. We start with \emph{a priori} dependence of $\ordn$ on a choice of generic almost complex structure.
\subsection{Independence of almost complex structures}
\label{sec:acsind}
\begin{prop}
\label{prop:j-independence}
Fix an open book decomposition $(S,\phi)$ of $M$ supporting $\xi$ and a pairwise disjoint properly embedded collection of arcs $\arcs$ on $S$ that contains a basis. Suppose that $(S,\phi,\arcs)$ yields an admissible Heegaard diagram, and let $J_{\mathit{HF}}^0$ and $J_{\mathit{HF}}^1$ be two generic almost complex structures on $\Sigma\times[0,1]\times\R$ satisfying conditions \textnormal{(\textbf{J1})--(\textbf{J5})} in \cite[\textsection 1]{Lipshitz}. Then $\ordn(S,\phi,\arcs;J_{\mathit{HF}}^0)=\ordn(S,\phi,\arcs;J_{\mathit{HF}}^1)$.
\end{prop}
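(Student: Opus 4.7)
The plan is to adapt the standard Heegaard Floer $\acs$-independence argument, refining continuation maps to be filtered with respect to $J_+$. Choose a generic $1$-parameter family $\{\acs^s\}_{s\in\R}$ of almost complex structures on $\Sigma\times[0,1]\times\R$ satisfying (\textbf{J1})--(\textbf{J5}), with $\acs^s\equiv J_{\mathit{HF}}^0$ for $s\ll 0$ and $\acs^s\equiv J_{\mathit{HF}}^1$ for $s\gg 0$. For each pair $\vx,\vy$ and each $A\in\pitwo(\vx,\vy)$ with $\mu(\DA)=0$, count (without the $\R$-translation quotient) embedded $\{\acs^s\}$-holomorphic curves representing $A$ subject to (\textbf{M0})--(\textbf{M6}), and partition by the topological quantity $J_+(A)=2\ell$ to produce maps $\Phi_\ell\colon\cfhat(\Mba)\to\cfhat(\Mba)$. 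These assemble into an endomorphism
\[\widehat{\Phi}\Big(\sum_{i\in\Z}c_i t^i\Big):=\sum_{i\in\Z}\Big(\sum_{\ell\geq0}(\Phi_\ell c_i)\,t^{i-\ell}\Big)\]
of $\CCF(S,\phi,\arcs)$, which I claim is a filtered quasi-isomorphism between $(\CCF(S,\phi,\arcs),\widehat{\partial}^0)$ and $(\CCF(S,\phi,\arcs),\widehat{\partial}^1)$.

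Three properties need to be verified in turn. \emph{Filteredness}, $\Phi_\ell=0$ for $\ell<0$, reduces to $J_+\geq0$ on continuation curves; the derivation in Section~\ref{sec:def-complex} uses only the topological identities \eqref{eq:euler-lip} and \eqref{eq:maslov}, which hold for any pseudo-holomorphic curve, together with the nonemptiness of both ends on each connected component, which remains valid for generic continuation curves by standard bubbling-avoidance. \emph{Chain map}, $\widehat{\Phi}\widehat{\partial}^0=\widehat{\partial}^1\widehat{\Phi}$, follows from the usual analysis of the ends of one-parameter moduli of index-$1$ continuation curves, with the $J_+$-stratification preserved under breaking by additivity of $J_+$. \emph{Preservation of the contact class}, $\widehat{\Phi}(\vx_\xi)=\vx_\xi$, follows from the local argument of \cite[Theorem 3.1]{HKM}: any continuation curve from $\vx_\xi$ with empty intersection with $\base\times[0,1]\times\R$ must have trivial domain by positivity of domains and the local picture near each $x_i\in a_i\cap b_i$, so only the constant curve contributes, giving $\Phi_0(\vx_\xi)=\vx_\xi$ and $\Phi_\ell(\vx_\xi)=0$ for $\ell\geq 1$.

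Reversing the homotopy yields a filtered chain map $\widehat{\Phi}'$ in the opposite direction; a two-parameter family of almost complex structures interpolating between the juxtaposition $\{\acs^{-s}\}*\{\acs^{s}\}$ and the constant family $J_{\mathit{HF}}^0$ produces, by counting parametrized moduli curve-by-curve in the $J_+$-stratification, a filtered chain homotopy from $\widehat{\Phi}'\widehat{\Phi}$ to the identity, and symmetrically from $\widehat{\Phi}\widehat{\Phi}'$ to the identity. A filtered chain homotopy equivalence induces isomorphisms on every page $E^k$ for $k\geq 1$, compatible with the cycle classes represented by $\vx_\xi$ in both complexes. Hence $\vx_\xi$ represents the trivial class in $E^{k+1}(S,\phi,\arcs;J_{\mathit{HF}}^0)$ if and only if it does in $E^{k+1}(S,\phi,\arcs;J_{\mathit{HF}}^1)$, which gives the desired equality of spectral orders.

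The main obstacle is the careful combination of the standard $\acs$-independence machinery for Lipshitz's multi-basepoint cylindrical Heegaard Floer homology with the $J_+$-filtered structure. In particular, verifying $J_+\geq 0$ on non-$\R$-invariant parametrized moduli, and checking the continuation analog of $\del_i\vx_\xi=0$ in every $J_+$-degree, requires a careful compactness and transversality analysis stratified by $J_+$. The admissibility hypothesis ensures uniform finiteness of all counts throughout the homotopy, but one must still confirm that the filtration structure is respected at every boundary stratum of the parametrized moduli spaces.
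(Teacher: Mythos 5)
Your proposal follows the same basic strategy as the paper: build continuation maps from a generic interpolating family of almost complex structures, observe that the same topological formula for $J_+$ applies to the index-$0$ continuation curves and hence that the induced map $\widehat{\Phi}$ on $\CCF$ is a filtered chain map, and check that the contact generator is fixed. The derivation of filteredness and the observation that $\Phi(\vC)=\C\times[0,1]$ (only the trivial curve contributes) are exactly the ingredients the paper uses.

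Where you diverge is in the final step. The paper never invokes chain homotopies: each of the two unidirectional filtered continuation maps $\widehat{\Phi}$ and $\widehat{\Phi}'$ induces a morphism of spectral sequences sending $[\vC]$ to $[\vC]$, and since a morphism of spectral sequences carries vanishing classes to vanishing classes, this already gives $\ordn(\cdot;J^0_{\mathit{HF}}) \geq \ordn(\cdot;J^1_{\mathit{HF}})$ from $\widehat{\Phi}$, and the reverse inequality from $\widehat{\Phi}'$. You instead go on to construct filtered chain homotopies between $\widehat{\Phi}'\widehat{\Phi}$ (resp. $\widehat{\Phi}\widehat{\Phi}'$) and the identity from a two-parameter family, and appeal to the fact that a filtered chain homotopy equivalence induces isomorphisms on all pages. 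That is correct, but it requires additional analysis: one must check that $J_+\geq 0$ holds on the index-$(-1)$ curves in the two-parameter family contributing to the homotopy, which your proposal asserts but does not spell out. This extra compactness and positivity analysis buys nothing here, since the one-sided morphisms already suffice to conclude; the paper's route is both shorter and avoids the parametrized-moduli subtleties entirely.
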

\begin{proof}
There exists a smooth 1-parameter family of $\R$-invariant almost complex structures $\{J^s_{\mathit{HF}}\}_{s\in\R}$ on $\Sigma\times[0,1]\times\R$ that agrees with $J_{\mathit{HF}}^0$ if $s<\epsilon$ and with $J_{\mathit{HF}}^1$ if $s> 1-\epsilon$ for some $\epsilon\ll 1$. As is explained in \cite[\textsection 9]{Lipshitz}, this family of almost complex structures can be chosen to satisfy conditions (\textbf{J1}),(\textbf{J2}), and (\textbf{J4}) in \cite[\textsection 1]{Lipshitz} when considered as a non-$\R$-invariant almost complex structure on $\Sigma\times[0,1]\times\R$. Furthermore, this almost complex structure guarantees transversality for pseudo-holomorphic curves with prescribed boundary conditions. It is used in \cite[\textsection 9]{Lipshitz} to define a chain map 
\[\Phi:(\cfhat(\Mba),\widehat{\partial}^0_{\mathit{HF}})\to(\cfhat(\Mba),\widehat{\partial}^1_{\mathit{HF}})\] 
via a signed count of $J^s_{\mathit{HF}}$-holomorphic curves in $\Sigma\times[0,1]\times\R$ satisfying conditions (\textbf{M0})--(\textbf{M6}) in \cite[\textsection 1]{Lipshitz} and representing relative homology classes $A\in \pitwo(\vx,\vy)$ with $\mathit{ind}(A)=0$. If $J^s_{\mathit{HF}}$ is generic, then the moduli space of such $J^s_{\mathit{HF}}$-holomorphic curves representing a class $A\in \pitwo(\vx,\vy)$ with $\mathit{ind}(A)=0$, respectively $\mathit{ind}(A)=1$, is a smooth orientable $0$-dimensional, respectively $1$-dimensional, manifold whose compactification in the $1$-dimensional case is obtained by adding on pseudo-holomorphic buildings of height 2 in which one level is $J^s_{\mathit{HF}}$-holomorphic and the other is either $J_{\mathit{HF}}^0$-holomorphic or $J_{\mathit{HF}}^1$-holomorphic as the case may be. The topology of the curves in each component of these moduli spaces is fixed.

Now we define an integer valued function on moduli spaces of $J^s_{\mathit{HF}}$-holomorphic curves in $\Sigma\times[0,1]\times\R$ with $\mathit{ind}\leq 1$ satisfying conditions (\textbf{M0})--(\textbf{M6}) in \cite[\textsection 1]{Lipshitz}. If $C_L$ is such a curve representing a class in $\pitwo(\vx,\vy)$, then define 
\begin{equation}
\label{eq:j-plus-cont}
J_+(C_L):=-\chi(C_L)+\N+|\mathbf{x}|-|\mathbf{y}|.
\end{equation}
Note that \eqref{eq:j-plus-cont} is additive in the sense that if a pseudo-holomorphic building of height 2 consists of a $J_{\mathit{HF}}^0$-holomorphic curve $C^1_L$ with $\mathit{ind}=1$ representing a class in $\pitwo(\vx,\vx')$ and a $J^s_{\mathit{HF}}$-holomorphic curve $C^0_L$ with $\mathit{ind}=0$ representing a class in $\pitwo(\vx',\vy)$, then the $J^s_{\mathit{HF}}$-holomorphic curve $C_L$ obtained from these by gluing (see \cite[Appendix A]{Lipshitz}) satisfies
\begin{equation}
\label{eq:j-plus-add}
J_+(C_L)=J_+(C^1_L)+J_+(C^0_L),
\end{equation}
since $\chi(C_L)=\chi(C^1_L)+\chi(C^0_L)-\N$. The same holds for a pseudo-holomorphic building of height 2 consisting of a $J^s_{\mathit{HF}}$-holomorphic curve $C^0_L$ with $\mathit{ind}=0$ representing a class in $\pitwo(\vx,\vy')$ and a $J_{\mathit{HF}}^1$-holomorphic curve $C^1_L$ with $\mathit{ind}=1$ representing a class in $\pitwo(\vy',\vy)$.
Note also that \eqref{eq:j-plus-cont} coincides with \eqref{eq:j-plus-lip}, which allows us to deduce similarly that $J_+(C_L)$ is a non-negative even integer. Hence, we may decompose $\Phi$ as
\[\Phi=\Phi^0+\Phi^1+\cdots+\Phi^\ell+\cdots,\]
where $\Phi^\ell$ counts $J^s_{\mathit{HF}}$-holomorphic curves with $J_+=2\ell$. Since $\Phi$ is a chain map, and $J_+$ is additive under gluing, it follows that 
\[\sum_{i+j=\ell}(\Phi^i\circ\partial^0_{j}-\partial^1_{i}\circ\Phi^j)=0.\]
This identity implies that there is a filtered chain map $\widehat{\Phi}$ from $(\CCF(S,\phi,\arcs),\widehat{\partial}^0)$ to $(\CCF(S,\phi,\arcs),\widehat{\partial}^1)$ defined by
\[\widehat{\Phi}(\sum_{i\in\Z}c_it^i):=\sum_{i\in\Z}\big(\sum_{\ell\in\Z}(\Phi^\ell c_i) t^{i-\ell}\big),\] 
hence a morphism of spectral sequences from $E^\ast(S,\phi,\arcs;J_{\mathit{HF}}^0)$ to $E^\ast(S,\phi,\arcs;J_{\mathit{HF}}^1)$. Moreover, $\Phi(\vx_\xi)=\vx_\xi$ since the only $J^s_{\mathit{HF}}$-holomorphic curve with negative ends at $\vx_\xi$ satisfying conditions (\textbf{M0})--(\textbf{M6}) in \cite[\textsection 1]{Lipshitz} is $\vx_\xi\times\R$. Therefore, we have $\ordn(S,\phi,\arcs;J_{\mathit{HF}}^0)\geq \ordn(S,\phi,\arcs;J_{\mathit{HF}}^1)$. On the other hand, we may also consider the chain map induced by the smooth $1$-parameter family of almost complex structures $\{J_{\mathit{HF}}^{1-s}\}_{s\in\R}$. Likewise, we obtain $\ordn(S,\phi,\arcs;J_{\mathit{HF}}^0)\leq \ordn(S,\phi,\arcs;J_{\mathit{HF}}^1)$. As a result, $\ordn(S,\phi,\arcs;J_{\mathit{HF}}^0)=\ordn(S,\phi,\arcs;J_{\mathit{HF}}^1)$.
\end{proof}
\subsection{Isotopy independence}
\label{sec:isotopyind}
What with Proposition \ref{prop:j-independence}, we may drop a choice of generic almost complex structure from the notation and simply write $\ordn(S,\phi,\arcs)$. We proceed to discuss the dependence of $\ordn$ on the monodromy. In this regard, let $\phi$ and $\phi'$ be two orientation-preserving diffeomorphisms of $S$ that restrict to the identity in a neighborhood of $\partial S$. Suppose that $\phi$ is isotopic to $\phi'$, and fix an isotopy $\{\phi_t\}_{t\in[0,1]}$ relative to $\partial S$ such that $\phi_0=\phi$ and $\phi_1=\phi'$. Given a pairwise disjoint properly embedded collection of arcs $\arcs$ on $S$ that contains a basis, the isotopy $\{\phi_t\}_{t\in[0,1]}$ yields an isotopy of arcs $\{\phi_t(\barcs)\}_{t\in[0,1]}$, where $\barcs$ is the collection of arcs as in Section \ref{sec:def-background}. Of interest to us are two kinds of isotopies:
\begin{enumerate}\leftskip-0.25in
\item For any $t\in[0,1]$, $\arcs$ intersects $\phi_t(\barcs)$ transversally in the interior of $S$.
\item The isotopy creates/annihilates a pair of transverse intersections between $\arcs$ and $\phi(\barcs)$.
\end{enumerate}
Following \cite{Lipshitz}, we refer to such isotopies as \emph{basic isotopies}. In general, a pointed isotopy between two multipointed Heegaard diagrams, namely, an isotopy supported in the complement of the basepoints, is called \emph{admissible} if each intermediate multipointed Heegaard diagram is admissible. Any two admissible multipointed Heegaard diagrams that are pointed isotopic are in fact isotopic through a sequence of admissible basic isotopies (see \cite[Proposition 5.6]{Lipshitz}). Note that isotopies of the monodromy of an open book decomposition yield pointed isotopies of the corresponding multipointed Heegaard diagram. Therefore, it suffices to investigate the behavior of $\ordn$ under admissible basic isotopies of the monodromy.

\begin{prop}
\label{prop:isotopy}
Let $(S,\phi)$ be an open book decomposition and $\arcs$ be a pairwise disjoint properly embedded collection of arcs $\arcs$ on $S$ that contains a basis. Suppose that $(S,\phi,\arcs)$ yields an admissible multipointed Heegaard diagram, and that $\phi'$ is isotopic to $\phi$ via an admissible basic isotopy. Then $\ordn(S,\phi',\arcs)=\ordn(S,\phi,\arcs)$. 
\end{prop}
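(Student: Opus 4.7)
The plan is to run the same continuation-map argument that appears in the proof of Proposition \ref{prop:j-independence}, but with the 1-parameter family of almost complex structures replaced by a 1-parameter family of multipointed Heegaard diagrams coming from the isotopy of the monodromy. The isotopy $\{\phi_t\}_{t\in[0,1]}$ from $\phi$ to $\phi'$ induces an isotopy of the $\beta$-curves $\{\phi_t(\barcs)\}_{t\in[0,1]}$ in the half $-S\times\{0\}$ of $\Sigma$, with the $\alpha$-curves and the basepoints held fixed. By the admissibility hypothesis, we may assume every intermediate multipointed Heegaard diagram in the isotopy is admissible. As in \cite[\textsection 9]{Lipshitz}, such an isotopy together with a generic family of almost complex structures on $\Sigma\times[0,1]\times\R$ produces a continuation chain map
\[
\Phi:(\cfhat(\Mba),\widehat{\partial}_{\mathit{HF}})\longrightarrow(\cfhat(\Sigma,\phi'(\barcs),\bsa),\widehat{\partial}'_{\mathit{HF}})
\]
by a signed count of index-$0$ pseudo-holomorphic curves in $\Sigma\times[0,1]\times\R$ with Lagrangian boundary conditions slid along the isotopy.

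The first key step is to verify that the analog of the computation leading to formula \eqref{eq:j-plus-cont} goes through for these dynamic curves, so that one may still define $J_+(C_L):=-\chi(C_L)+\N+|\mathbf{x}|-|\mathbf{y}|$ and obtain a non-negative even integer. The Euler characteristic and the boundary topology of the compactification $C$ are insensitive to whether the boundary lies on static or moving Lagrangians, and the indices are still given by Lipshitz's formula, so the analog of \eqref{eq:jplus} persists. The second step is to check additivity of $J_+$ under gluing in exactly the form of \eqref{eq:j-plus-add}, using that the ends of a height-2 building glue topologically along $I$-chord collections contributing $\N$ to $\chi$ in each factor. Decomposing $\Phi=\Phi^0+\Phi^1+\cdots$ by the value of $J_+$, additivity yields the identity $\sum_{i+j=\ell}(\Phi^i\circ\partial^{}_{j}-\partial'_{i}\circ\Phi^j)=0$, which promotes $\Phi$ to a filtered chain map $\widehat{\Phi}$ between $\CCF(S,\phi,\arcs)$ and $\CCF(S,\phi',\arcs)$ and hence to a morphism of the associated spectral sequences.

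The third step is to identify the image of the contact generator under $\Phi$. The distinguished intersection points $x_i$ defining $\vx_\xi$ sit in small neighborhoods of $\partial S$ in the page, and both the isotopy $\{\phi_t\}$ (which is relative to $\partial S$) and the admissible basic isotopy of $\barcs$ are supported in the interior of $S$. Hence $\vx_\xi$ persists as an $I$-chord collection throughout the family, and the only dynamic curve with negative ends at $\vx_\xi$ satisfying \textnormal{(\textbf{M0})--(\textbf{M6})} in \cite[\textsection 1]{Lipshitz} is the trivial cylinder $\vx_\xi\times\R$. Therefore $\Phi(\vx_\xi)=\vx_\xi$, and so $\ordn(S,\phi,\arcs)\geq\ordn(S,\phi',\arcs)$. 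Running the same argument for the reverse isotopy $\{\phi_{1-t}\}$ gives the opposite inequality, completing the proof.

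The main obstacle is handling the type (2) basic isotopies, in which a pair of intersections between $\arcs$ and $\phi_t(\barcs)$ is created or annihilated, since the set of generators of $\cfhat$ genuinely changes. The resolution is that the standard transversality and gluing package from \cite[\textsection 9 and Appendix A]{Lipshitz} continues to apply through such moves provided admissibility is maintained (which is the hypothesis); the generator $\vx_\xi$ itself is never one of the intersection points being born or destroyed, since those lie in the interior of $S$ while $\vx_\xi$ is supported near $\partial S$, so the identification $\Phi(\vx_\xi)=\vx_\xi$ is unaffected. Beyond this, the verification of the filtered structure is purely a matter of checking that the Euler-characteristic bookkeeping behind \eqref{eq:jplus} and the gluing identity $\chi(C_L)=\chi(C^1_L)+\chi(C^0_L)-\N$ remain valid for non-$\R$-invariant curves, which is standard.
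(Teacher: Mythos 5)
Your proof takes a genuinely different route from the paper's. You run a single moving-boundary continuation map along the whole isotopy $\{\phi_t\}$, counting index-$0$ curves with Lagrangian boundary conditions that slide with $t$, and then filter this map by $J_+$. The paper instead splits into the two kinds of basic isotopies: type (1) isotopies are converted into deformations of the complex structure on $\Sigma$, and hence handled by Proposition \ref{prop:j-independence} directly; type (2) isotopies are handled by introducing a Hamiltonian translate $\bsb'$ intersecting $\bsb$ in exactly two points and counting triangles in the multipointed Heegaard triple diagram $(\Sigma,\bsb',\bsb,\bsa,\base)$. Your approach is more conceptually unified, and the $J_+$ bookkeeping you describe does go through for curves with moving boundary conditions just as for fixed ones. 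The paper's triangle-counting approach has the advantage that the image of the contact generator under the induced chain map can be identified explicitly: the only relevant triangle domain is the shaded one in Figure \ref{fig:isotopy}, which by the Riemann Mapping Theorem has a unique holomorphic representative. Your identification $\Phi(\vx_\xi)=\vx_\xi$ is plausible by the basepoint-placement argument, but it takes a little more care than the static version because the domain of a continuation curve lives in a 1-parameter family of diagrams.

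One concrete issue: you cite \cite[\textsection 9]{Lipshitz} for the construction of the moving-boundary continuation map, but that part of Lipshitz's paper treats deformation of the almost complex structure with \emph{fixed} Lagrangian boundary conditions, not dynamic ones; in the cylindrical formulation, isotopy invariance is established through the triangle maps of \cite[\textsection 10--11]{Lipshitz}, which is exactly what the paper does. Moving-boundary continuation maps are developed in Ozsv\'ath--Szab\'o's original formulation (see \cite[\textsection 7.3]{OzsvathSzabo}), but not in the cylindrical one you are working in, so as written your proposal relies on machinery that is standard in Floer theory but is not available off the shelf in the framework of this paper. Either develop the moving-boundary compactness and gluing in the cylindrical setting, or fall back on the triangle-map approach for the type (2) isotopies.
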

\begin{proof}
As is explained in \cite[Chapter 9]{Lipshitz} (cf. \cite[\textsection 7.3]{OzsvathSzabo}), basic isotopies of the first kind above are equivalent to deformations of the complex structure on $\Sigma$. With this understood, $\ordn$ is unchanged under isotopies of this sort by Proposition \ref{prop:j-independence}. As for basic isotopies of the second kind above, we consider the chain maps induced by the multipointed Heegaard triple diagram $(\Sigma,\bsb',\bsb,\bsa,\base)$ where $\bsb'=\{\beta_1',\dots,\beta_{\N}'\}$ is such that each $\beta_i'$ is obtained from a small Hamiltonian isotopy of $a_i\cup\phi'(b_i)$ so that it intersects $\beta_i$ transversally in exactly two points near the point $x_i$ as shown in Figure \ref{fig:isotopy}, while it is disjoint from $\beta_j$ for $j\neq i$. As a result, the Heegaard diagram $(\Sigma,\bsb',\bsb)$ represents the manifold $\#_{\G}S^1\times S^2$; we may assume that $\bsb'$ is sufficiently close to $\bsb$ so that the signed area of the region between them is zero with respect to an area form on $\Sigma$ which delivers the admissibility criteria for the multipointed Heegaard diagram $(\Sigma,\bsb',\bsb,\base)$ as stated in \cite[Lemma 5.3]{Lipshitz}. Consequently, the multipointed Heegaard triple diagram $(\Sigma,\bsb',\bsb,\bsa,\base)$ is also admissible by \cite[Lemma 10.14]{Lipshitz}.

\begin{figure}[h]
\centering
\includegraphics[width=2.75in]{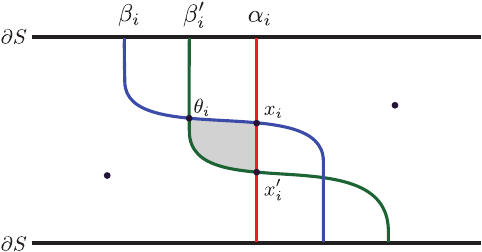}
\caption{Part of the restriction of the multipointed Heegaard triple diagram $(\Sigma,\bsb',\bsb,\bsa,\base)$ to $S\times\{\frac{1}{2}\}\subset\Sigma$.}
\label{fig:isotopy}
\end{figure}

The Heegaard triple diagram $(\Sigma,\bsb',\bsb,\bsa)$ describes a cobordism with one outgoing boundary component and two incoming boundary components, one of which is diffeomorphic to the manifold $\#_{\G}S^1\times S^2$. To be more specific, this cobordism is diffeomorphic to the complement of a tubular neighborhood of a bouquet of $\G$ embedded circles in the product cobordism $[0,1]\times M$. It follows that, there is a unique $\spinc$ structure $\mathfrak{t}_\xi$ on this cobordism which restricts to the trivial $\spinc$ structure $\spc_\circ$ on $\#_{\G}S^1\times S^2$ and to $\spc_\xi$ on $M$. 

With the preceding understood, there exists a chain map 
\[\hat{f}_{\bsb',\bsb,\bsa;\mathfrak{t}_\xi}:\cfhat(\Sigma,\bsb',\bsb,\spc_\circ)\otimes_{\F} \cfhat(\Mba,\spc_\xi)\to \cfhat(\Mbpa,\spc_\xi),\]
defined by counting embedded Fredholm index-$0$ pseudo-holomorphic curves in $\Sigma\times T$ subject to appropriate boundary conditions. Here $T$ denotes a disk with three marked points on its boundary and $\Sigma\times T$ is equipped with an almost complex structure satisfying conditions ($\mathbf{J'1}$)--($\mathbf{J'4}$) in \cite[\textsection 10.2 p. 1018]{Lipshitz}.  

No matter the almost complex structure, the differential on $\cfhat(\Sigma,\bsb',\bsb,\spc_\circ)$ vanishes identically. Therefore, restricting to the subcomplex $\F\cdot \vtheta\otimes_{\F} \cfhat(\Mba,\spc_\xi)$, where $\bstheta=\{\theta_1,\dots,\theta_\N\}$ and $\vtheta$ is the top degree generator of $\cfhat(\Sigma,\bsb',\bsb,\spc_\circ)$, results in a chain map 
\[\hat{f}_{\bsb',\bsb,\bsa;\mathfrak{t}_\xi}(\vtheta\otimes\cdot):\cfhat(\Mba,\spc_\xi)\to \cfhat(\Mbpa,\spc_\xi).\]
The latter induces an isomorphism of homologies by \cite[Proposition 11.4]{Lipshitz} (cf. \cite[Proposition 9.8]{OzsvathSzabo}). In what follows, we work with a generic split complex structure on $\Sigma\times T$. We are allowed to do so since transversality of moduli spaces as defined by such almost complex structures can be guaranteed by slight perturbation of the $\bsa$, $\bsb$, and $\bsb'$ curves. To be more precise, we may invoke the technique of \cite{Oh}. This is because any class $A$ in $\pitwo(\vtheta,\cdot,\cdot)$ satisfies the boundary injectivity criterion in the sense of \cite{Lipshitz}. By way of a reminder, a class $A$ in $\pitwo(\vtheta,\cdot,\cdot)$ is said to satisfy the boundary injectivity criterion if any pseudo-holomorphic curve $u$ for some split complex structure on $\Sigma\times T$ representing the class $A$ has $\pi_\Sigma\circ u$ somewhere injective in its boundary. This criterion is guaranteed as long as the domain representing the class has a region with multiplicity one adjacent to a region of multiplicity zero. Note that this is the case for any class in $\pitwo(\vtheta,\cdot,\cdot)$ due to the placement of the basepoints, in that basepoints appear on both sides of every $\bsa$, $\bsb$, and $\bsb'$ curve.

Next we show that the chain map $\hat{f}_{\bsb',\bsb,\bsa;\mathfrak{t}_\xi}(\vtheta\otimes\cdot)$ induces a morphism of spectral sequences~from $E^\ast(S,\phi,\arcs;J^{}_{\mathit{HF}})$ to $E^\ast(S,\phi',\arcs;J'_{\mathit{HF}})$. First, define an analog of the formula \eqref{eq:jplus-prod} for the cobordism described by the Heegaard triple diagram $(\Sigma,\bsb',\bsb,\bsa)$ via
\begin{equation}\label{eq:jplus-non-prod}
J_+(A)=\frac{\N}{2}+\mu(\DA)-2e(A)+|\mathbf{x}|-|\mathbf{y}|,
\end{equation}
where $A\in \pitwo(\vtheta,\vx,\vy)$, $\mu(\DA)$ denotes the Maslov index of the domain $\DA$ associated to $A$, which is the expected dimension of the moduli space of pseudo-holomorphic curves representing the class $A$, and $e(A)$ is the Euler measure of the domain associated to the class $A$. If $A$ can be represented by an embedded Fredholm index-$0$ pseudo-holomorphic curve $C_L$, then the formula \eqref{eq:jplus-non-prod} becomes
\begin{eqnarray*}
J_+(A)&=&\frac{\N}{2}-2e(A)+|\mathbf{x}|-|\mathbf{y}|\\
&=&\underbrace{-\chi(C_L)+\N}_{\text{by \cite[\textsection 10.2]{Lipshitz}}}+|\mathbf{x}|-|\mathbf{y}|.\\ 
\end{eqnarray*}
It follows from this formula that $J_+(A)=2\ell$ for some $\ell\geq0$. To see this, consider the smooth compact oriented surface $C$ obtained from the compactification of $C_L$ by first adding $2$-dimensional $1$-handles, one for each pair $(\beta'_i,\beta^{}_i)$ and one for each pair $(\beta'_i,\alpha^{}_i)$, and then capping off the boundary components of the resulting surface containing the $I$-chord collection $\vtheta$. Note that $\chi(C)=\chi(C_L)-\N$, and $|\x|$ (resp. $|\y|$) is equal to the number of boundary components of $C$ arising from the $I$-chord collection $\vx$ (resp. $\vy$). The claim then follows in exactly the same way as in \textsection\ref{sec:definition}. Consequently, we can decompose the chain map $\hat{f}_{\bsb',\bsb,\bsa;\mathfrak{t}_\xi}(\vtheta\otimes\cdot)$ as
\[\hat{f}_{\bsb',\bsb,\bsa;\mathfrak{t}_\xi}(\vtheta\otimes\cdot)=f^0+f^1+\cdots+f^\ell+\cdots,\]
where $f^\ell$ counts embedded Fredholm index-$0$ pseudo-holomorphic curves with $J_+=2\ell$. Since the Maslov index and the Euler measure are additive under concatenation, it follows using the formulae \eqref{eq:jplus-prod} and \eqref{eq:jplus-non-prod} that $J_+$ is also additive. Therefore, we have 
\begin{equation}
\label{eq:identity}
\sum_{i+j=\ell}(f^i\circ\partial^{}_{j}-\partial^{'}_{i}\circ f^j)=0
\end{equation}
since $\hat{f}_{\bsb',\bsb,\bsa;\mathfrak{t}_\xi}$ is a chain map and the $J_+$-filtered differential on $\cfhat(\Sigma,\bsb',\bsb,\spc_\circ)$ is identically zero. The latter is due to the fact that $\cfhat(\Sigma,\bsb',\bsb,\spc_\circ)$ is isomorphic to $(\F_{(0)}\oplus\F_{(1)})^{\otimes \N}$ where $\F_{(0)}\oplus\F_{(1)}$ is a graded module over $\F$ with vanishing differential and the domains corresponding to the pseudo-holomorphic curves that contribute to the differential of the generator, $\theta_i\times[0,1]$, of $\F_{(1)}$ are both bigons, which have $J_+=0$. In short, the restriction of the differential on $\cfhat(\Sigma,\bsb',\bsb,\spc_\circ)\otimes_{\F} \cfhat(\Mba,\spc_\xi)$ to the subcomplex $\F\cdot \vtheta\otimes_{\F} \cfhat(\Mba,\spc_\xi)$ is $J_+$-filtered. 

The identity \eqref{eq:identity} implies that there is a filtered chain map from $(\CCF(S,\phi,\arcs),\widehat{\partial})$ to $(\CCF(S,\phi',\arcs),\widehat{\partial}')$ as before, hence a morphism of spectral sequences from $E^\ast(S,\phi,\arcs;J^{}_{\mathit{HF}})$ to $E^\ast(S,\phi',\arcs;J'_{\mathit{HF}})$. In addition, \[\hat{f}_{\bsb',\bsb,\bsa;\mathfrak{t}_\xi}(\vtheta\otimes\vx_\xi)=\vx'_\xi\] since the shaded triangles in Figure \ref{fig:isotopy} constitute the only holomorphic domain that contributes to this chain map due to the placement of the basepoints, and it is represented by a unique pseudo-holomorphic curve by the Riemann Mapping Theorem. Hence, $\ordn(S,\phi,\arcs;J^{}_{\mathit{HF}})\geq \ordn(S,\phi',\arcs;J'_{\mathit{HF}})$. Likewise, the isotopy from $\phi'$ to $\phi$ yields $\ordn(S,\phi,\arcs;J^{}_{\mathit{HF}})\leq \ordn(S,\phi',\arcs;J'_{\mathit{HF}})$. As a result, $\ordn(S,\phi,\arcs;J^{}_{\mathit{HF}})=\ordn(S,\phi',\arcs;J'_{\mathit{HF}})$.
\end{proof}

\begin{rmk} Sarkar--Wang \cite{SarkarWang} and Plamenevskaya \cite{Plamenevskaya} proved that the Heegaard diagram resulting from an arbitrary choice of $(S,\phi,\arcs)$, where $\arcs$ contains a basis, can be made \emph{nice} by choosing $\phi$ appropriately in its isotopy class. On a nice Heegaard diagram, every Maslov index-$1$ holomorphic domain is represented by an \emph{empty} embedded bigon or an \emph{empty} embedded square \cite[Theorem 3.3]{SarkarWang}. It is easy to see from \eqref{eq:jplus-prod} that such domains have either $J_+=0$ or $J_+=2$. This observation indicates that there should be a combinatorial description of $\ordn$. 
\end{rmk}

\subsection{Eliminating triangles}
\label{sec:arcind}
In this subsection, we investigate dependence of $\ordn$ on a choice of pairwise disjoint properly embedded collection of arcs containing a basis. More specifically, given an open book decomposition $(\Pg,\phi)$ and such an arc collection $\arcs$ on $\Pg$, we prove that $\ordn$ is non-increasing under a \emph{triangle elimination} operation on $\arcs$, which we will describe momentarily. As we shall see in Section \ref{sec:proof}, this operation gives us quite a bit of flexibility in our arguments that lead to the proofs of our main theorems. To set the stage, let $\obd$ be an open book decomposition supporting a contact structure $\xi$, and $\arcs=\{a^{}_0,a^{}_3,a^{}_4,\dots,a^{}_\N\}$ be a collection of pairwise disjoint properly embedded arcs on $\Pg$ that contains a basis. Suppose that the three arcs $a^{}_0,a^{}_3,a^{}_4\in\arcs$ bound a connected component of $\Pg\smallsetminus\bigcup\arcs$. Denote by $\arcs'$ the collection of pairwise disjoint properly embedded arcs on $\Pg$ obtained by discarding $a_0$ and ``doubling'' $a^{}_3$ and $a^{}_4$, i.e. $\arcs'=\{a'_1,a'_2,a^{}_3,a^{}_4,\dots,a^{}_\N\}$ where $a'_1$ and $a'_2$ are parallel and sufficiently close to $a^{}_3$ and $a^{}_4$, respectively (see Figure \ref{fig:tc_triangle}). Then:
\vfill
\begin{figure}[h]
\centering
\begin{subfigure}[b]{0.45\linewidth}
\centering
\includegraphics[width=1.85in]{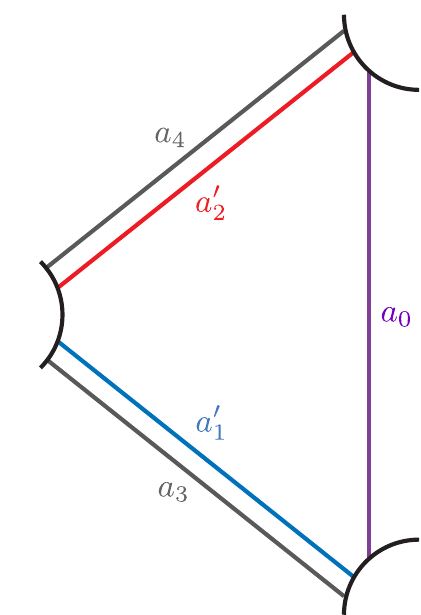}
\caption{}
\label{fig:tc_triangle}
\end{subfigure}%
\begin{subfigure}[b]{0.45\linewidth}
\centering
\includegraphics[width=1.25in]{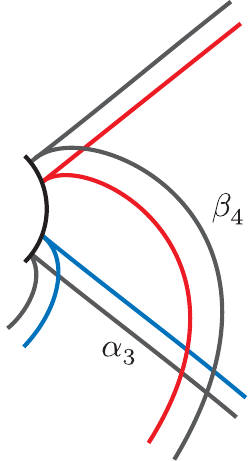}
\caption{}
\label{fig:tc_aftercollapse}
\end{subfigure}
\caption{(a) The configuration of arcs in the $\Pg\times\{\frac{1}{2}\}$ page of the open book decomposition representing a triangle collapse operation. (b) The local behavior of the $\bsb$-curves as shown in the $\Pg\times \{0\}$ half of the Heegaard diagram $(\Sigma,\bsb',\bsa')$.} 
\label{fig:tc}
\end{figure}

\begin{prop}
\label{prop:triangle_collapse}
Let $\obd$ be an open book decomposition, $\arcs$ be a collection of pairwise disjoint properly embedded arcs on $\Pg$ that contains a basis, and $\arcs'$ be obtained from $\arcs$ via triangle elimination. Then $\ordn(\Pg,\phi,\arcs')\leq \ordn(\Pg,\phi,\arcs)$.
\end{prop}
\begin{proof}
Throughout, we assume that the monodromy $\phi$ moves arcs $a^{}_0$, $a^{}_3$, and $a^{}_4$ to the right, since otherwise it would not move $a'_1$ and $a'_4$ to the right either. As a result, $\ordn(\Pg,\phi,\arcs)$ and $\ordn(\Pg,\phi,\arcs')$ would both be zero as in the proof of Theorem \ref{thm:vanishing}. We further assume that $\beta^{}_4$ stays parallel to the boundary of $\Pg$ immediately after turning right in the $\Pg\times \{0\}$ half of the Heegaard diagram until it intersects $\alpha^{}_3$ as in Figure \ref{fig:tc_aftercollapse}. Otherwise (see Figure \ref{fig:non-bounparallel}), isotope the monodromy $\phi$ so as to guarantee that this is the case (see Figure \ref{fig:non-bounparallel_iso}). With this understood, we proceed with the rest of the proof.

\begin{figure}[h]
\centering
\begin{subfigure}[b]{0.45\linewidth}
\centering
\includegraphics[width=1.85in]{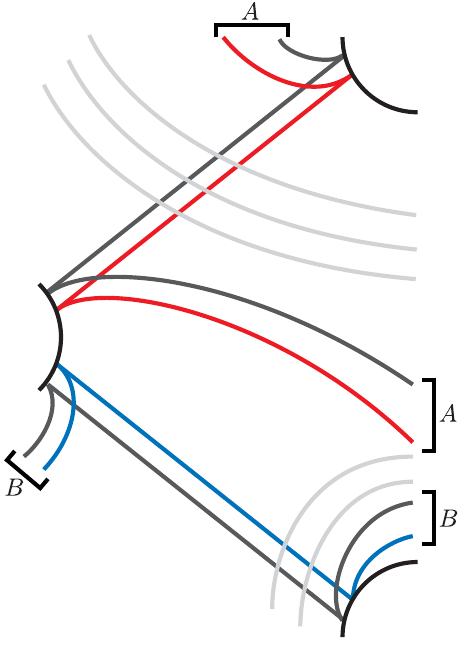}
\caption{}
\label{fig:non-bounparallel}
\end{subfigure}%
\begin{subfigure}[b]{0.45\linewidth}
\centering
\includegraphics[width=1.85in]{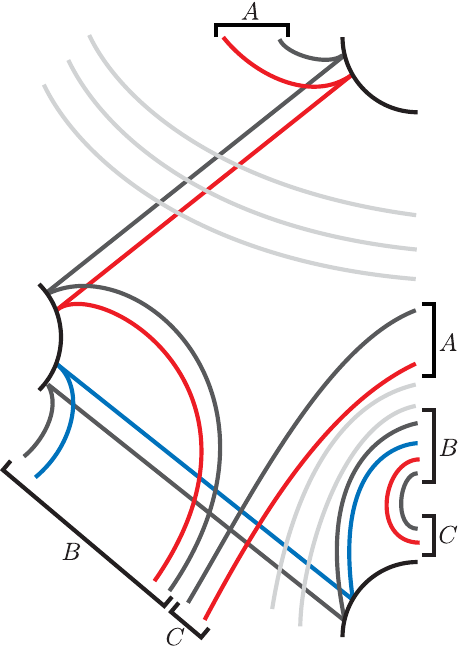}
\caption{}
\label{fig:non-bounparallel_iso}
\end{subfigure}
\caption{(a) Configuration of arcs when $\beta^{}_4$ doesn't stay parallel to the boundary of $\Pg$ immediately after turning right in the $\Pg\times \{0\}$ half of the Heegaard diagram. (b) Configuration of arcs after an isotopy to guarantee that $\beta^{}_4$ intersects $\alpha^{}_3$ immediately after turning right. In both figures, brackets indicate the ends of arcs that are identified.}
\end{figure}

To start, we may isotope $\phi$ so that the multi-pointed Heegaard diagram $(\Sigma,\bsb',\bsa',\base')$ corresponding to $(\Pg,\phi,\arcs')$ is \emph{nice} while making sure that the intersection pattern as depicted in Figure \ref{fig:tc_aftercollapse} is preserved. To be more explicit, starting with the Heegaard diagram resulting from $\phi$, we may apply the algorithm of Sarkar--Wang \cite[\textsection 4.1]{SarkarWang} to produce a nice Heegaard diagram by performing finger moves on $\bsb$-curves only in the $\Pg\times\{0\}$ half of the Heegaard surface $\Sigma$, as is argued in \cite{Plamenevskaya}. This is because, in a Heegaard diagram arising from an open book decomposition, there are regions with basepoint on either side of every $\bsb$-curve. Furthermore, we will show that these finger moves on $\bsb$-curves can be performed in such a way that the arc $\delta$ along $\beta^{}_4$ between the points $x^{}_4$ and $v$, shown in Figure \ref{fig:forbidden}, remains unchanged, and in the resulting nice Heegaard diagram, no $\bsb$-curve forms a bigon with the arc $\eta$ along $\alpha^{}_3$ between the points $x^{}_3$ and $v$. It suffices to perform these finger moves in the Heegaard diagram resulting from the arc collection $\{a^{}_3,a^{}_4,\dots,a^{}_\N\}$ since adding $a'_1$ and $a'_2$, which are parallel to $a^{}_3$ and $a^{}_4$, respectively, merely subdivides bigon and rectangle regions into smaller bigon and rectangle regions. With the preceding understood, we produce a nice diagram with the desired properties in the three steps that follow. Throughout, the \emph{distance} of a region is defined to be the minimum number of intersection points between the $\bsb$-curves and an arc connecting the interior of that region to a region with basepoint in the complement of the $\bsa$-curves and the arc $\delta$. 

\begin{stp}
Note that, given a region, there is exactly one region with basepoint that can be connected to the interior of that region via an arc in the complement of the $\bsa$-curves and the arc $\delta$. Proceed as in the algorithm of Sarkar-Wang by first killing all non-disk regions without performing a finger move starting at $\delta$ and then performing finger moves as in the proof of \cite[Lemma 4.1]{SarkarWang} to reduce the \emph{distance d complexity} of the Heegaard diagram to (0) starting with the largest distance \emph{bad regions}. Note that, the region without a basepoint that has the arc $\delta$ on its boundary is adjacent to a region with distance one less along an arc on a $\bsb$-curve other than $\delta$. Therefore, at no point in the process, finger moves needed to break up the region without a basepoint that has the arc $\delta$ on its boundary into rectangles and bigons start at $\delta$. Continue performing finger moves as in the proof of \cite[Lemma 4.1]{SarkarWang} until the \emph{distance} of the Heegaard diagram is reduced to~$1$, that is, until all bad regions are of distance at most~$1$.
\end{stp}

\begin{figure}[h]
\centering
\begin{subfigure}[b]{0.45\linewidth}
\centering
\includegraphics[width=1.35in]{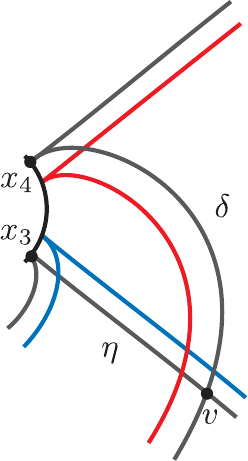}
\caption{}
\label{fig:forbidden}
\end{subfigure}
\begin{subfigure}[b]{0.45\linewidth}
\centering
\includegraphics[width=1.85in]{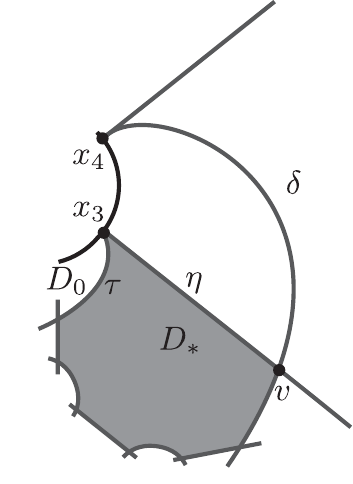}
\caption{}
\label{fig:dstar}
\end{subfigure}
\caption{(a) The arc configuration in the desired nice Heegaard diagram with the arcs prohibited from forming bigons indicated. (b) The domain $D_\ast$, the adjacent region with basepoint $D_0$, and the arc $\tau$ along $\beta^{}_3$ which they both have on their boundaries.}
\label{fig:}
\end{figure}

\begin{stp}
The region with no basepoints and the arc $\eta$ on its boundary has distance $1$, and it is adjacent to a region with basepoint $D_0$ along $\beta^{}_3$. We denote this region by $D_\ast$ (see Figure \ref{fig:dstar}). Continue performing finger moves as in the proof of \cite[Lemma 4.1]{SarkarWang} so as to reduce distance $1$ complexity of the Heegaard diagram. In doing so, all finger moves that may enter $D_\ast$ will be stopped once they enter $D_\ast$. This is contrary to what the Sarkar--Wang algorithm requires: a finger pushed into an equal or larger distance rectangle should exit the rectangle on the opposite edge. Our goal is to break all distance $1$ bad regions up into rectangles and bigons except possibly $D_\ast$. The reason why this is possible is because the Sarkar--Wang algorithm terminates after a finite number of finger moves, and we can stop those finger moves that enter $D_\ast$ once they enter $D_\ast$ while using a modified version of distance $1$ complexity of the Heegaard diagram that disregards the region $D_\ast$ at every step of the process. This modification of the algorithm does not increase distance of any bad regions, and the modified algorithm eventually breaks every bad region other than possibly $D_\ast$ into rectangles and bigons at the expense of possibly increasing the badness of $D_\ast$. 
\end{stp}

\begin{figure}[h]
\centering
\begin{subfigure}[b]{0.45\linewidth}
\centering
\includegraphics[width=1.85in]{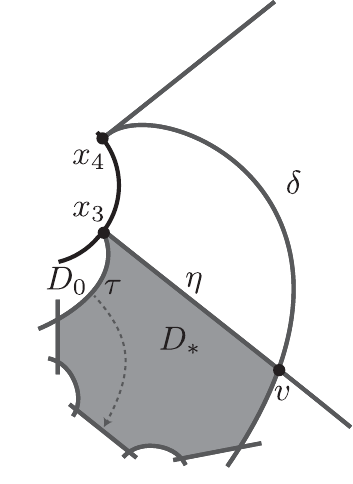}
\caption{}
\label{fig:dstarfinger}
\end{subfigure}
\begin{subfigure}[b]{0.45\linewidth}
\centering
\includegraphics[width=1.85in]{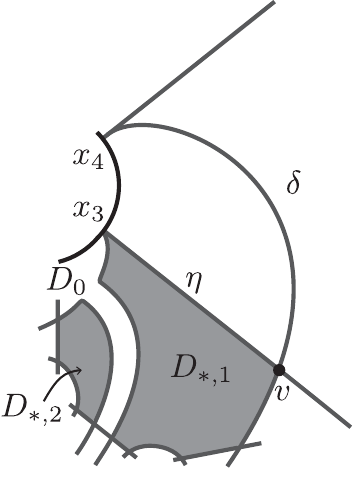}
\caption{}
\label{fig:dstarbreak}
\end{subfigure}
\caption{(a) The dashed line indicates the finger move to break up $D_\ast$. (b) The regions $D_{\ast,1}$ and $D_{\ast,2}$ formed after the finger move.}
\label{fig:algorithm}
\end{figure}

\begin{stp}
Finally, we break up the only remaining bad region, namely, $D_\ast$. We claim that we can perform a sequence of finger moves as in the proof of \cite[Lemma~4.1]{SarkarWang} so that, in the resulting nice Heegaard diagram, no $\bsb$-curve forms a bigon with $\eta$. We prove this claim by strong induction on the badness $b(D_\ast)$ of the region $D_\ast$. By way of reminder, the badness of a $2n$-gon is defined in \cite[\textsection 4.1]{SarkarWang} to be $\max\{n-2,0\}$. If $b(D_\ast)=1$, that is, if $D_\ast$ is a hexagon, then performing a finger move as in the proof of \cite[Lemma 4.1]{SarkarWang} starting at the arc $\tau$ along $\beta^{}_3$ with an end at $x^{}_3$ on the boundary of $D_\ast$ breaks $D_\ast$ up into two rectangles. Moreover, this finger won't come back to $D_\ast$ since otherwise it would have to follow a full $\bsb$-curve, which in turn would force our finger to cross a region with basepoint because there are regions with basepoint on either side of every $\bsb$-curve. Next suppose that $b(D_\ast)>1$ and perform a finger move as in the proof of \cite[Lemma 4.1]{SarkarWang} starting at $\tau$ (see Figure \ref{fig:dstarfinger}). If the finger doesn't come back to $D_\ast$, then it will end up in a bigon region or a region with basepoint, and $D_\ast$ will be broken up into a region $D_{\ast,1}$ with badness $b(D_\ast)-1$ and a rectangle $D_{\ast,2}$. Note that both $D_{\ast,1}$ and $D_{\ast,2}$ are adjacent to $D_0$ along $\beta^{}_3$, and that $D_{\ast,1}$ has the arc $\eta$ on its boundary (see Figure \ref{fig:dstarbreak}). Then by the induction hypothesis, the claim is true. Suppose instead that the finger comes back to $D_\ast$. Then, by the argument in \cite[Subcase 4.2]{SarkarWang} and the fact that there are regions with basepoint on either side of every $\bsb$-curve, there exists another finger move starting at $\tau$ that doesn't come back to $D_\ast$. This finger move would break $D_\ast$ up into two regions $D_{\ast,1}$ and $D_{\ast,2}$ both are adjacent to $D_0$ along $\beta^{}_3$, and $D_{\ast,1}$ has the arc $\eta$ on its boundary. Then we have $b(D_{\ast,1})+b(D_{\ast,2})=b(D_\ast)-1$, and $b(D_{\ast,2})\geq 1$. Once again, in contrast to the Sarkar--Wang algorithm, which requires ordering bad regions with increasing badness and then breaking up bad regions starting with the regions having the least positive badness, we first break up the region $D_{\ast,2}$ regardless of whether it is a bad region with the least positive badness. As we perform finger moves to break up $D_{\ast,2}$, as well as any subsequent new bad region that might emerge in that process, we stop a finger move that enters $D_{\ast,1}$ once it enters $D_{\ast,1}$, regardless of whether $b(D_{\ast,1})>0$ or not. In order to break up a bad region with badness $b$ into rectangles we need to perform exactly $b$ finger moves, assuming no finger comes back to that region, and each finger pushed into a region would increase its badness by $1$. Therefore, the process of breaking up $D_{\ast,2}$ into rectangles would increase the badness of $D_{\ast,1}$ by at most $b(D_{\ast,2})$. In the end, we have a Heegaard diagram with a single bad region of distance $1$ adjacent to $D_0$ along $\tau$ having the arc $\eta$ on its boundary. The badness of this region is at most $b(D_{\ast,1})+b(D_{\ast,2})=b(D_\ast)-1$. Hence, by the induction hypothesis, our claim holds true, and a further sequence of finger moves as described above yields the desired nice Heegaard diagram.
\end{stp}

Next, add an isotopic copy of $a^{}_0$ in such a way that $\alpha_0$ and $\beta_0$ intersect $\beta_4$ and $\alpha_3$, respectively, to form bigons as in Figure \ref{fig:tc_beforearcs}. Then the multi-pointed Heegaard diagram $(\Sigma,\bsb,\bsa,\base)$ corresponding to $(\Pg,\phi,\arcs)$ is also nice. This is because there is a canonical 1--1 correspondence between the regions without a basepoint in the Heegaard diagram $(\Sigma,\bsb',\bsa',\base')$ and the regions without a basepoint in the Heegaard diagram $(\Sigma,\bsb,\bsa,\base)$ in the complement of the shaded area in Figure \ref{fig:tc_regions}. In the shaded area, all regions without basepoints belonging to the Heegaard diagram $(\Sigma,\bsb,\bsa,\base)$ are easily seen to be bigons. Since, by Proposition 3.2, $\ordn(\Pg,\phi,\arcs')$ does not change under isotopy of $\phi$, we may assume throughout the rest of the proof that the monodromy $\phi$ is such that the multi-pointed Heegaard diagrams $(\Sigma,\bsb',\bsa',\base')$ and $(\Sigma,\bsb,\bsa,\base)$ are both nice and have the intersection patterns depicted in Figures \ref{fig:tc_aftercollapse} and \ref{fig:tc_beforearcs}.

\begin{figure}[h]
\centering
\begin{subfigure}[b]{0.45\linewidth}
\centering
\includegraphics[width=1.25in]{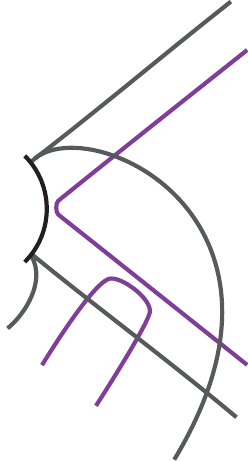}
\caption{}
\label{fig:tc_beforearcs}
\end{subfigure}
\begin{subfigure}[b]{0.45\linewidth}
\centering
\includegraphics[width=1.25in]{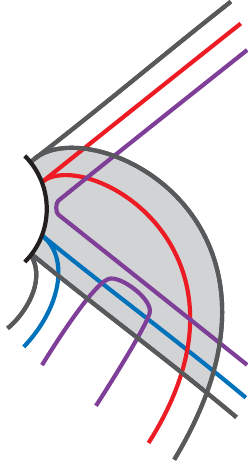}
\caption{}
\label{fig:tc_regions}
\end{subfigure}
\caption{(a) The $\Pg\times \{0\}$ half of the multi-pointed Heegaard diagram $(\Sigma,\bsb,\bsa,\base)$. (b) The shaded area in which the regions in the two multi-pointed Heegaard diagrams essentially differ.}
\label{tc_beforecollapse}
\end{figure}

With the preceding understood, to each ($\N-1$)-tuple of intersection points $\y=\{y_0,y_3,y_4,\dots,y_\N\}$ defining a generator $\vy$ of $\cfhat(\Sigma,\bsb,\bsa)$ associate a unique ($\N-1$)-tuple of intersection points $\y'$ in $\bsa'\cap\bsb'$ using the following recipe. For points belonging to $\y$ that lie on $\alpha_0$ or $\beta_0$, associate a unique point in $\bsa'\cap\bsb'$ according to the following rules:
\begin{itemize}\leftskip-0.25in
\item If $y_0\in\alpha_0\cap\beta_0$ and $y_0\neq x_0$, then the associated point in $\bsa'\cap\bsb'$ lies in $\alpha'_i\cap\beta'_j$ where $i,j\in\{1,2\}$ (see Figure \ref{fig:tc_case1}). If $y_0=x_0$, we associate to it the point $x'_1$. 
\item If $y_0\in\alpha_0\cap\beta_j$ where $j\geq3$, then the associated point in $\bsa'\cap\bsb'$ lies in $\alpha'_i\cap\beta'_j$ where $i\in\{1,2\}$ (see Figure \ref{fig:tc_case2}).
\item If $y_i\in\alpha_i\cap\beta_0$ where $i\geq3$, then the associated point in $\bsa'\cap\bsb'$ lies in $\alpha'_i\cap\beta'_j$ where $j\in\{1,2\}$ (see Figure \ref{fig:tc_case3}).
\end{itemize}
\begin{figure}[h]
\centering
\begin{subfigure}[b]{0.3\linewidth}
\centering
\includegraphics[width=1.25in]{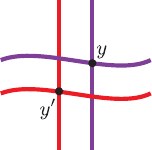}
\caption{}
\label{fig:tc_case1}
\end{subfigure}%
\begin{subfigure}[b]{0.3\linewidth}
\centering
\includegraphics[width=1.25in]{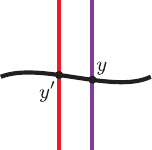}
\caption{}
\label{fig:tc_case2}
\end{subfigure}
\begin{subfigure}[b]{0.3\linewidth}
\centering
\includegraphics[width=1.25in]{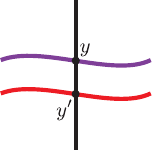}
\caption{}
\label{fig:tc_case3}
\end{subfigure}
\caption{Assigning to an intersection point in $\bsa\cap\bsb$ and intersection point in $\bsa'\cap\bsb'$. Straight arcs indicate $\bsa$-curves, while wavy arcs indicate $\bsb$-curves. Purple color corresponds to $\alpha^{}_0$ or $\beta^{}_0$, black color corresponds to $\alpha^{}_i$ or $\beta^{}_j$ for $i,j\geq3$, and red color corresponds to $\alpha'_i$ or $\beta'_j$ for $i,j\in\{1,2\}$.}
\label{fig:tc_cases}
\end{figure}
In all other cases, the intersection points remain the same. Note that $\y'$ uses exactly one of $\alpha'_1$ or $\alpha'_2$, and exactly one of $\beta'_1$ and $\beta'_2$. Depending on which pair of $\alpha'_i$ and $\beta'_j$ that $\y'$ uses, we assign $\y$ the ordered pair $p_\y:=(i,j)$. Then, unless $p_\y=(1,2)$, we associate to $\y$ a unique $\N$-tuple of intersection points $\tilde{\y}:=\{y'_1,y'_2,y'_3,y'_4,\dots,y'_\N\}$ defining a generator $\vty$ of the chain complex $\cfhat(\Sigma,\bsb',\bsa')$ by adding to $\y'$ 
\begin{itemize}\leftskip-0.25in
\item the point $x'_2$ if $p_\y=(1,1)$,
\item the point $w\in\alpha'_2\cap\beta'_1$ indicated in Figure \ref{fig:tc_w} if $p_\y=(2,1)$,
\item the point $x'_1$ if $p_\y=(2,2)$.
\end{itemize}
\begin{figure}[h]
\centering
\includegraphics[width=1.25in]{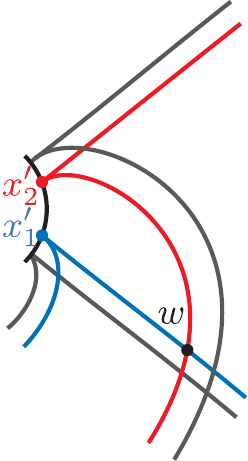}
\caption{The intersection point $w$.}
\label{fig:tc_w}
\end{figure}
Note that this recipe associates to the distinguished ($\N-1$)-tuple of intersection points $\x^{}_\xi=\{x^{}_0,x^{}_3,x^{}_4,\dots,x^{}_\N\}$ the distinguished $\N$-tuple of intersection points $\x'_\xi=\{x'_1,x'_2,x^{}_3,x^{}_4,\dots,x^{}_\N\}$. These two sets of intersection points define the distinguished generators that represent the Ozsv\'ath--Szab\'o contact class in the homology of the chain complexes $\cfhat(\Sigma,\bsb,\bsa)$ and $\cfhat(\Sigma,\bsb',\bsa')$, respectively.
\vfill
\begin{lem}
\label{lem:py_preserved}
Let $\D\in\pitwo(\vy^1,\vy^2)$ be a Maslov index-1 holomorphic domain. Then $p_{\y^1}=p_{\y^2}$ unless $\D$ is a bigon. Furthermore, if $p_{\y^1}=(1,2)$, then $p_{\y^2}=(1,2)$.
\end{lem}
\begin{proof}
Given $\y^1=\{y^1_0,y^1_3,y^1_4,\dots,y^1_\N\}$ defining a generator $\vy^1$ of $\cfhat(\Sigma,\bsb,\bsa)$, the first entry of the ordered pair $p_{\y^1}$ is determined by $y^1_0$, specifically by whether $y^1_0$ is near $\alpha'_1$ or $\alpha'_2$. Similarly, the second entry of $p_\y$ is determined by $y^1_i\in \alpha^{}_i\cap\beta^{}_0$, specifically by whether $y^1_i$ is near $\beta'_1$ or $\beta'_2$. Let $\D\in\pitwo(\vy^1,\vy^2)$ be a Maslov index-1 holomorphic domain that is a rectangle having both $\alpha_0$ and $\beta_0$ on its boundary (the assertion of the lemma is clear for other types of rectangular domains). If $p_{\y^1}=(i,j)$, then $\D$ has to have an edge parallel to $\alpha_3$ or to $\alpha_4$ depending on whether $i=1$ or $i=2$, and an edge parallel to $\beta_3$ or $\beta_4$ depending on whether $j=1$ or $j=2$. This is because the only Maslov index-1 rectangular domains that would not have this property would be ones that have an edge along $\alpha^{}_0$ or along $\beta^{}_0$ running parallel to both $\alpha_3$ and $\alpha_4$ or both $\beta_3$ and $\beta_4$, respectively, on its boundary. But there are no such rectangular domains in the nice Heegaard diagrams we produced above since $\delta$ does not form a bigon with any $\bsa$-curve other than $\alpha_0$ and no $\bsb$-curve other than $\beta_0$ forms a bigon with $\eta$. As a result, $p_{\y^2}=(i,j)$. On the other hand, if $\D$ is a bigon and $p_{\y^1}\neq p_{\y^2}$, then we have either $p_{\y^1}=(2,j)$ or $p_{\y^1}=(i,1)$ while $p_{\y^2}=(1,j)$ or $p_{\y^2}=(i,2)$, respectively. It follows, in particular, that if $p_{\y^1}=(1,2)$, then $p_{\y^2}=(1,2)$.
\end{proof}
Consequently, the submodule of $\cfhat(\Sigma,\bsb,\bsa)$ generated by $\vy$ with $p_\y=(1,2)$ is a subcomplex. We will denote this subcomplex by $\cfhat_\circ(\Sigma,\bsb,\bsa)$ for future reference. Next we investigate the image under the differential of a generator $\vty^1$ of $\cfhat(\Sigma,\bsb',\bsa')$  corresponding to a generator $\vy^1$ of $\cfhat(\Sigma,\bsb,\bsa)$.
\begin{lem}
\label{lem:moduli_(12)}
If $p_{\y^1}\neq(1,2)$, then there exists a Maslov index-1 holomorphic domain $\D'\in\pitwo(\vty^1,\vty)$ only if $\tilde{\y}=\tilde{\y}^2$ for some generator $\vy^2$ of $\cfhat(\Sigma,\bsb,\bsa)$ with $p_{\y^2}\neq(1,2)$.
\end{lem}
\begin{proof}
To see this, write $\tilde{\y}^1=\{{y'}^1_1,{y'}^1_2,{y'}^1_3,{y'}^1_4,\dots,{y'}^1_\N\}$, $\tilde{\y}=\{{y'}^{}_1,{y'}^{}_2,{y'}^{}_3,{y'}^{}_4,\dots,{y'}^{}_\N\}$, and recall that either ${y'}^1_1={x'}^{}_1$, ${y'}^1_1=w$, or ${y'}^1_2={x'}^{}_2$. If ${y'}^1_1={x'}^{}_1$ or ${y'}^1_2={x'}^{}_2$, then ${y'}^{}_1={x'}^{}_1$ or ${y'}^{}_2={x'}^{}_2$, respectively, since there are no non-trivial Maslov index-1 holomorphic domains with a corner at ${x'}^{}_1$ or ${x'}^{}_2$. If ${y'}^1_1=w$, then either ${y'}^{}_1={x'}^{}_1$, ${y'}^{}_1=w$ or ${y'}^{}_2={x'}^{}_2$ since a Maslov index-1 holomorphic domain with a corner at $w$ has to have a corner at ${x'}^{}_1$ or ${x'}^{}_2$. The latter is due to the fact that the multi-pointed Heegaard diagram $(\Sigma,\bsb',\bsa',\base')$ is nice, hence all Maslov index-1 holomorphic domains are empty embedded bigons or rectangles, and that starting at $w$ and moving along $\alpha'_1$ or $\beta'_2$ there is nowhere else to turn a corner other than at $x'_1$ or at $x'_2$. As a result, $\tilde{\y}=\tilde{\y}^2$ for some generator $\vy^2$ of $\cfhat(\Sigma,\bsb,\bsa)$ with $p_{\y^2}\neq(1,2)$.
\end{proof}
\begin{lem}
\label{lem:moduli_1to1}
If $p_{\y^1}\neq(1,2)$ and $p_{\y^2}\neq(1,2)$, then there is a canonical 1--1 correspondence between Maslov index-1 holomorphic domains in $\pitwo(\vy^1,\vy^2)$ and Maslov index-1 holomorphic domains in $\pitwo(\vty^1,\vty^2)$. 
\end{lem}
\begin{proof}
Keep in mind that the Heegaard diagrams $(\Sigma,\bsb,\bsa,z)$ and $(\Sigma,\bsb',\bsa',z')$ are both nice. In particular, a Maslov index-1 holomorphic domain has a unique holomorphic representative up to translation. If $\vy^1$ and $\vy^2$ are generators of $\cfhat(\Sigma,\bsb,\bsa)$ with $p_{\y^1}\neq(1,2)$ and $p_{\y^2}\neq(1,2)$, then a Maslov index-1 holomorphic domain $\D\in\pitwo(\vy^1,\vy^2)$ gives rise to a canonical Maslov index-1 holomorphic domain $\D'\in\pitwo(\vty^1,\vty^2)$, and vice versa. If a domain $\D$ has neither $\alpha_0$ nor $\beta_0$ on its boundary, then $\D'=\D$. Otherwise, to construct $\D'$ from $\D$ we add rectangular regions between $\alpha_0$ and $\alpha'_1$, $\alpha_0$ and $\alpha'_2$, $\beta_0$ and $\beta'_1$ or $\beta_0$ and $\beta'_2$, while removing the bigon regions between $\alpha_0$ and $\beta'_2$ or $\alpha'_1$ and $\beta_0$ as needed (see Figure \ref{fig:collapse_domains}). The former operation is reversible if $\D'$ has $\alpha'_1$ or $\alpha'_2$, and $\beta'_1$ or $\beta'_2$ on its boundary.
\end{proof}

\begin{figure}[h]
\centering
\begin{subfigure}[b]{0.3\linewidth}
\centering
\includegraphics[width=1.25in]{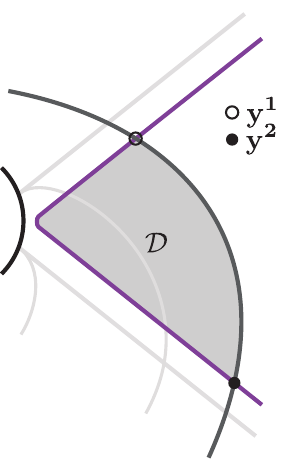}
\caption{}
\label{fig:before_collapse}
\end{subfigure}%
\begin{subfigure}[b]{0.3\linewidth}
\centering
\includegraphics[width=1.25in]{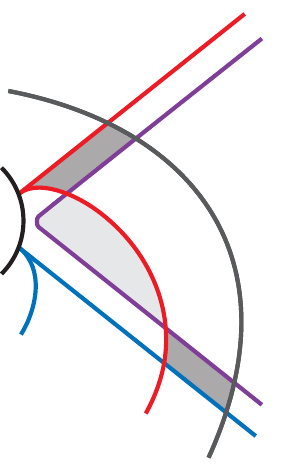}
\caption{}
\label{fig:regions}
\end{subfigure}
\begin{subfigure}[b]{0.3\linewidth}
\centering
\includegraphics[width=1.25in]{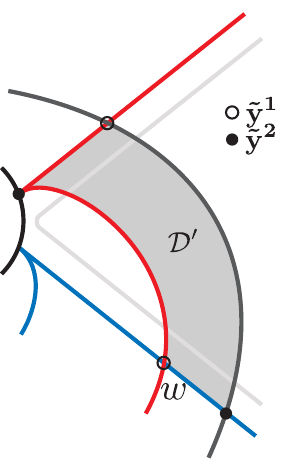}
\caption{}
\label{fig:after_collapse}
\end{subfigure}
\caption{Constructing domains in $(\Sigma,\bsb',\bsa',\base')$ from domains in $(\Sigma,\bsb,\bsa,\base)$. Start with a domain in the multi-pointed Heegaard diagram $(\Sigma,\bsb,\bsa,\base)$ as in (a), add the darker shaded rectangular regions and subtract the lighter shaded bigon region in (b) to get the domain in the multi-pointed Heegaard diagram $(\Sigma,\bsb',\bsa',\base')$ shown in (c).}
\label{fig:collapse_domains}
\end{figure}

It is useful to note here that if $\vy^1$ and $\vy^2$ are generators of $\cfhat(\Sigma,\bsb,\bsa)$ with $p_{\y^1}\neq(1,2)$ and $p_{\y^2}\neq(1,2)$ and $\D\in\pitwo(\vy^1,\vy^2)$ is a Maslov index-1 holomorphic domain, then the corresponding Maslov index-1 holomorphic domain $\D'\in\pitwo(\vty^1,\vty^2)$ has $J_+(\D')=J_+(\D)$. To see this, first note the following:
\begin{itemize}\leftskip-0.25in
\item If $p_\y=(1,1)$ or $p_\y=(2,2)$, then $|\tilde{\y}|=|\y|+1$.
\item If $p_\y=(2,1)$, then $|\tilde{\y}|=|\y|$.
\end{itemize} 
As before, if $\D$ has neither $\alpha_0$ nor $\beta_0$ on its boundary, then $\D'=\D$, hence $J_+(\D')=J_+(\D)$. Now suppose that $\D$ has either $\alpha_0$ or $\beta_0$ on its boundary. 
\begin{itemize}\leftskip-0.25in
\item If $\D$ is a rectangle, then $p_{\y^1}=p_{\y^2}$ (by Lemma \ref{lem:py_preserved}) and $\D'$ is a rectangle. Hence $|\tilde{\y}^1|-|\tilde{\y}^2|=|\y^1|-|\y^2|$ and $J_+(\D')=J_+(\D)$.
\item If $\D$ is a bigon, then $p_{\y^1}=(2,1)$ (otherwise, $p_{\y^2}=(1,2)$) and either $p_{\y^2}=(1,1)$ or $p_{\y^2}=(2,2)$ (by Lemma \ref{lem:py_preserved}), and $\D'$ is a rectangle. Hence $|\tilde{\y}^1|-|\tilde{\y}^2|=|\y^1|-|\y^2|-1$ and 
\[J_+(\D')=2\cdot 1-1+|\tilde{\y}^1|-|\tilde{\y}^2|=2\cdot\frac{1}{2}-1+|\y^1|-|\y^2|=J_+(\D).\]
\end{itemize} 

By Lemma \ref{lem:py_preserved}, the module $\cfhat_\circ(\Sigma,\bsb,\bsa)$ generated by $\vy$ with $p_\y=(1,2)$ is a subcomplex of $\cfhat(\Sigma,\bsb,\bsa)$. Therefore, we may construct the quotient complex $\cfhat(\Sigma,\bsb,\bsa)/\cfhat_\circ(\Sigma,\bsb,\bsa)$. Note that since $p_{\x^{}_\xi}=(1,1)$, it is sent under the quotient map $q:\cfhat(\Sigma,\bsb,\bsa)\to\cfhat(\Sigma,\bsb,\bsa)/\cfhat_\circ(\Sigma,\bsb,\bsa)$ to a non-zero class. The filtered extension of the quotient, $\big(\cfhat(\Sigma,\bsb,\bsa)/\cfhat_\circ(\Sigma,\bsb,\bsa)\big)\otimes_\F \F[t,t^{-1}]$, is canonically isomorphic as a filtered chain complex to the quotient $\CCF(\Pg,\phi,\arcs)/\CCF_\circ(\Pg,\phi,\arcs)$. The quotient map 
\[\CCF(\Pg,\phi,\arcs)\to \CCF(\Pg,\phi,\arcs)/\CCF_\circ(\Pg,\phi,\arcs),\]
is a filtered chain map, and it induces a morphism of associated spectral sequences. Therefore, if we define $\ordn_q(\Pg,\phi,\arcs)$ to be the spectral order as determined by the class $q(\x^{}_\xi)$ and the spectral sequence associated to the filtered quotient chain complex $\big(\cfhat(\Sigma,\bsb,\bsa)/\cfhat_\circ(\Sigma,\bsb,\bsa)\big)\otimes_\F \F[t,t^{-1}]$, then $\ordn(\Pg,\phi,\arcs)\geq\ordn_q(\Pg,\phi,\arcs)$. Meanwhile, by Lemmas \ref{lem:moduli_(12)} and \ref{lem:moduli_1to1}, there exists an injective map from $\cfhat(\Sigma,\bsb,\bsa)/\cfhat_\circ(\Sigma,\bsb,\bsa)$ to $\cfhat(\Pg,\phi,\arcs')$ sending $\vx^{}_\xi$ to $\vx'_\xi$, hence an injective map of filtered chain complexes from $\big(\cfhat(\Sigma,\bsb,\bsa)/\cfhat_\circ(\Sigma,\bsb,\bsa)\big)\otimes_\F \F[t,t^{-1}]$ into $\CCF(\Pg,\phi,\arcs')$ which induces a morphism of associated spectral sequences. As a result, $\ordn_q(\Pg,\phi,\arcs)\geq\ordn(\Pg,\phi,\arcs')$, finishing the proof.\qedhere 
\end{proof}
\begin{defn}
It follows from Proposition \ref{prop:triangle_collapse} that for the purpose of defining the contact invariant $\ord$ it suffices to work with arc collections that are bases with multiple parallel copies of some arcs added since one can always pass to such an arc collection, which we will refer to as a \emph{multi-basis}, via triangle elimination without increasing the value of $\ordn$. In other words, we may define $\ord(M,\xi)$ to be the minimum of $\ordn(\Pg,\phi,\arcs)$ over all choices of open book decompositions $(\Pg,\phi)$ of $M$ supporting $\xi$ and multi-bases $\arcs$.
\end{defn}
\vspace{-0.2in}

\section{Properties of $\ord$}
\label{sec:proof}

The first bullet point of Theorem \ref{thm:main}, that is, $\ord$ vanishes for overtwisted contact structures, was proved at the end of Section \ref{sec:definition}. This section proves the remaining properties of the contact invariant $\ord$ summarized in Theorems \ref{thm:main}, \ref{cor:cobord}, and \ref{thm:consum}.

To start, we establish a few basic properties of $\ordn$. To do so, we work in a slightly more general context where we consider arc collections that may not contain a basis. Let $(S,\phi)$ be an open book decomposition. Given an arc collection $\arcs$ on $S$ that does not necessarily contain a basis, we can extend it to an arc collection $\tilde{\arcs}$ that contains a basis. Then we fix a generic almost complex structure $\acs$ for the multi-pointed Heegaard diagram $(\Sigma,\tilde{\bsb},\tilde{\bsa},\tilde{\base})$ associated to the arc collection $\tilde{\arcs}$. We may regard $\cfhat(\Sigma,\bsb,\bsa)$ as a submodule of $\cfhat(\Sigma,\tilde{\bsb},\tilde{\bsa})$ by identifying the generators of $\cfhat(\Sigma,\bsb,\bsa)$ with the generators obtained from these via adding on the distinguished points lying in $S\times\{\frac{1}{2}\}$ for each of the arcs in $\arcs\smallsetminus\tilde{\arcs}$. Due to the placement of the basepoints there can be no pseudo-holomorphic curves with negative punctures at the chords resulting from these points. Therefore, the differential on $\cfhat(\Sigma,\bsb,\bsa)$ and on the submodule of $\cfhat(\Sigma,\tilde{\bsb},\tilde{\bsa})$ that it is identified with coincide. As a result, we may consider $\cfhat(\Sigma,\bsb,\bsa)$ as a subcomplex of $\cfhat(\Sigma,\tilde{\bsb},\tilde{\bsa})$. With the preceding understood, the first basic property of $\ordn$ is that it is non-increasing under enlargement of arc collections.

\begin{lem}
\label{lem:incl-arcs} 
Suppose that $\arcs_1\subset\arcs_2$ are two collections of pairwise disjoint properly embedded arcs on $S$. Then there exists a generic almost complex structure $\acs$ on $\Sigma\times[0,1]\times\R$, and an inclusion of chain complexes
\[I:\cfhat (\Sigma,\bsb_1,\bsa_1) \to \cfhat (\Sigma,\bsb_2,\bsa_2),\]
\[\mathcal{I}:\CCF (\Pg, \phi, \arcs_1) \to \CCF (\Pg, \phi, \arcs_2),\]
such that the contact generator is mapped to the contact generator by the first inclusion, while the latter inclusion induces a morphism of spectral sequences from $E^\ast(\Pg,\phi,\arcs_1;\acs)$ to $E^\ast(\Pg,\phi,\arcs_2;\acs)$; hence, $\ordn(\Pg, \phi, \arcs_1;\acs)\geq \ordn(\Pg, \phi, \arcs_2;\acs)$.
\end{lem}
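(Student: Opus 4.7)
The plan is to realize $I$ on chain generators by adjoining the distinguished intersection points for the new arcs, exactly as foreshadowed in the paragraph preceding the lemma. Concretely, for each $a_i\in\arcs_2\smallsetminus\arcs_1$ let $x_i\in\alpha_i\cap\beta_i$ denote the unique interior distinguished intersection, and set
\[
I(\vx):=\vx\,\cup\,\bigl\{x_i\times[0,1]\,:\,a_i\in\arcs_2\smallsetminus\arcs_1\bigr\}.
\]
For the almost complex structure I would first extend $\arcs_2$ to a larger collection $\tilde{\arcs}$ containing a basis and pick $\acs$ generic on $\Sigma\times[0,1]\times\R$ for the Heegaard diagram corresponding to $\tilde{\arcs}$; this $\acs$ is then automatically generic with respect to both smaller diagrams. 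Define $\mathcal{I}$ on $\CCF$ by $\F[t,t^{-1}]$-linear extension of $I$.

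The first task is verifying that $I$ is a chain map sending the contact generator to the contact generator. Both claims rest on the basepoint convention of Section~\ref{sec:def-background}: each newly added arc $a_i$ is flanked on both sides of the small bigon bounded by $a_i$ and $b_i$ by a basepoint, so any $\acs$-holomorphic curve with a negative end at an $I$-chord collection containing $x_i\times[0,1]$ must retain $x_i\times[0,1]$ at its positive end, lest its domain acquire positive multiplicity at a basepoint. Hence $\dhat$ preserves the image of $I$, and on that image the count of $\acs$-holomorphic curves coincides with the Heegaard Floer differential defining $\cfhat(\Sigma,\bsb_1,\bsa_1)$. That $I(\vx_\xi)=\vx_\xi$ is immediate, as the contact generator in each diagram is precisely the union of all distinguished intersection points.

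It remains to check that $\mathcal{I}$ is filtered, equivalently that $I$ preserves $J_+$. Given $A\in\pitwo(\vx,\vy)$, the corresponding class $A'\in\pitwo(I(\vx),I(\vy))$ has an identical underlying domain on $\Sigma$, since the $\alpha$- and $\beta$-curves from $\arcs_2\smallsetminus\arcs_1$ lie in multiplicity-zero regions bordered by basepoints; hence $\mu(\mathcal{D}(A'))=\mu(\mathcal{D}(A))$ and $e(\mathcal{D}(A'))=e(\mathcal{D}(A))$. The permutations associated to $I(\vx)$ and $I(\vy)$ differ from those associated to $\vx$ and $\vy$ only by adjoining the same fixed-point cycles $(x_i)$, so $|I(\vx)|-|I(\vy)|=|\vx|-|\vy|$. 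Formula \eqref{eq:jplus-prod} then gives $J_+(A')=J_+(A)$. Thus $\mathcal{I}$ respects the $t$-filtration and descends to a morphism of spectral sequences carrying $[\vx_\xi]$ to $[\vx_\xi]$; any chain killing $[\vx_\xi]$ on page $E^{k+1}$ of the first sequence maps to one killing $[\vx_\xi]$ on page $E^{k+1}$ of the second, forcing $\ordn(\Pg,\phi,\arcs_1;\acs)\geq\ordn(\Pg,\phi,\arcs_2;\acs)$.

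The only real subtlety is ruling out any $\acs$-holomorphic curve in the enlarged diagram that would interact with the new distinguished intersection points in a way invisible from the smaller diagram; this is handled entirely by the basepoint-placement argument above. Everything else is bookkeeping with Maslov indices, Euler measures, and permutation cycle counts.
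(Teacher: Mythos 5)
Your proof matches the paper's argument in all essentials: you define $I$ by adjoining the distinguished intersection points $x_i$, choose $\acs$ generic for a larger diagram so that it is automatically generic for both $(\Sigma,\bsb_1,\bsa_1)$ and $(\Sigma,\bsb_2,\bsa_2)$, invoke the basepoint placement to force triviality of curves near the added points so that $I$ is a chain map sending $\vx_\xi^1$ to $\vx_\xi^2$, and then conclude that $\mathcal{I}$ is filtered and hence induces a morphism of spectral sequences. The paper asserts without elaboration that the $\acs$-holomorphic curves defining the two differentials coincide (hence $\mathcal{I}$ is automatically filtered), whereas you spell out the bookkeeping that $J_+$ is preserved via \eqref{eq:jplus-prod}; your phrasing ``the $\alpha$- and $\beta$-curves from $\arcs_2\smallsetminus\arcs_1$ lie in multiplicity-zero regions'' is a slight overstatement (those curves may cross nonzero-multiplicity regions in the $-S\times\{0\}$ half of $\Sigma$), but the facts you actually use --- multiplicity zero near each $x_i$, unchanged Maslov index, Euler measure, and cycle-count differences --- are all correct, so the conclusion stands.
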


\begin{proof}
It suffices to find a generic almost complex structure $\acs$ on $\Sigma\times[0,1]\times\R$ so that moduli spaces of $\acs$-holomorphic curves associated to the Heegaard diagram $(\Sigma,\bsb_2,\bsa_2)$ are cut out transversally, because this immediately implies transversality of moduli spaces of $\acs$-holomorphic curves associated to the Heegaard diagram $(\Sigma,\bsb_1,\bsa_1)$. Having fixed such a generic almost complex structure, the inclusion map $I$ is defined on the set of generators of $\cfhat(\Sigma,\bsb_1,\bsa_1)$ by
\[I(\vy)=\vy'\]
where $\y'=\y\cup\{x_a\}_{a\in\arcs_2\smallsetminus\arcs_1}$ and $x_a$ is the unique intersection point of $a$ and $b$ for an arc $a\in \arcs_2\smallsetminus\arcs_1$. It follows that $I(\vx^1_\xi)=\vx^2_\xi$. Meanwhile, the $\acs$-holomorphic curves that define the differential acting on elements of the subgroup $I(\cfhat (\Sigma,\bsb_1,\bsa_1))$ are the same as the $\acs$-holomorphic curves that define the differential on $\cfhat (\Sigma,\bsb_1,\bsa_1)$. Therefore, $I$ is a chain map, and the induced inclusion map $\mathcal{I}$ is a filtered chain map. The latter induces a morphism of spectral sequences from $E^\ast(\Pg,\phi,\arcs_1;\acs)$ to $E^\ast(\Pg,\phi,\arcs_2;\acs)$; hence, $\ordn(\Pg, \phi, \arcs_1;\acs)\geq \ordn(\Pg, \phi, \arcs_2;\acs)$.
\end{proof}

The next lemma claims that $\ordn$ remains the same under suitable enlargement of the pages of an open book decomposition while keeping the arc collection untouched.

\begin{lem}
\label{lem:incl-surface} 
Let $\arcs$ be a collection of pairwise disjoint properly embedded arcs on $S$, and $\Pg'$ be a compact oriented surface with boundary obtained from $S$ by attaching 1-handles away from a neighborhood of $\partial\arcs$. Let $\phi':\Pg'\to\Pg'$ be an orientation-preserving diffeomorphism whose restriction to $\arcs$ agrees with $\phi$. Then there are generic almost complex structures $\acs^{}$ and $\acs'$ to define the differentials on $\cfhat(\Sigma,\bsb,\bsa)$ and $\cfhat(\Sigma',\bsb,\bsa)$, respectively, such that $(\CCF(\Pg,\phi,\arcs),\widehat{\partial})$ and $(\CCF(\Pg',\phi',\arcs),\widehat{\partial}')$ are isomorphic as filtered chain complexes. As a result, $\ordn(\Pg,\phi,\arcs;\acs^{})=\ordn(\Pg',\phi',\arcs;\acs')$.
\end{lem}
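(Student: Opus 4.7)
The plan is to construct an explicit isomorphism of filtered chain complexes between $(\CCF(S, \phi, \arcs), \widehat{\partial})$ and $(\CCF(S', \phi', \arcs), \widehat{\partial}')$, exploiting the fact that all the relevant Heegaard data are insensitive to the extra topology away from $\arcs$. Since $\phi' = \phi$ in a neighborhood of $\arcs$, the curves $\bsa$ (depending only on $\arcs$) and $\bsb$ (depending only on $\barcs$ and $\phi(\barcs)$) are identical subsets of $\Sigma \subset \Sigma'$, and the complement $\Sigma' \smallsetminus \Sigma$ consists of the doubled 1-handle attachments, all disjoint from $\bsa \cup \bsb$. I would choose basepoints $\base'$ consisting of $\base$ together with one additional basepoint placed in each new connected component of $\Sigma' \smallsetminus (\bsa \cup \bsb)$ created by the attached handles. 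The set of generators of $\cfhat(\Sigma', \bsb, \bsa)$ then tautologically coincides with that of $\cfhat(\Sigma, \bsb, \bsa)$.

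The next step is to transfer admissibility and to match the differentials. Any periodic domain (or indeed any domain with nonnegative multiplicities) in $(\Sigma', \bsb, \bsa, \base')$ must vanish on the new regions since each of them contains a basepoint of $\base'$; hence its support lies in the original subsurface $\Sigma$, so admissibility of $(\Sigma, \bsb, \bsa, \base)$ forces admissibility of $(\Sigma', \bsb, \bsa, \base')$, and the sets $\pitwo(\vx, \vy)$ computed in the two diagrams are naturally identified. I would then choose $\acs'$ to restrict to $\acs$ on $\Sigma \times [0,1] \times \R$; since an $\acs'$-holomorphic curve with trivial multiplicity in $\Sigma' \smallsetminus \Sigma$ cannot project nontrivially into that region, the moduli spaces of curves representing any admissible class are in bijection. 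This gives $\partial_\ell = \partial_\ell'$ for every $\ell \geq 0$, once one observes that the formula \eqref{eq:jplus-prod} depends solely on intrinsic data of the domain---its Maslov index, Euler measure, and the cycle counts of $\vx, \vy$---all of which are unaffected by the ambient surface.

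Assembling these observations, the identity on $\bsa \cap \bsb$ extends to an isomorphism of filtered chain complexes sending $\vx_\xi$ to $\vx_\xi$, which by Definition \ref{def:algktor} yields $\ordn(S, \phi, \arcs; \acs) = \ordn(S', \phi', \arcs; \acs')$. The main subtlety I expect is the admissibility transfer when a 1-handle is attached with both feet in the same component of $S \smallsetminus \arcs$, since this creates new candidate periodic domains wrapping through the handle; but the basepoints placed in each new region suffice to kill precisely these, so the obstacle is mild. Transversality of $\acs'$ may need a small perturbation away from $\Sigma$, but this does not disturb the bijection since no pseudo-holomorphic curves venture into the new region.
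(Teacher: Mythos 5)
Your proposal follows essentially the same route as the paper's proof: identify the generating sets tautologically, note that the basepoints confine all domains to the common subsurface, extend the almost complex structure, and observe that $J_+$ is intrinsic to the domain. The paper states this more tersely, phrasing the relationship between $\Sigma$ and $\Sigma'$ as ``$\Sigma'$ is obtained from $\Sigma$ by connected summing with tori along regions in the Heegaard diagram $(\Sigma,\bsb,\bsa)$ with basepoints,'' while you describe it as an inclusion $\Sigma\subset\Sigma'$ with complement disjoint from $\bsa\cup\bsb$; strictly speaking $\Sigma$ is not literally a subsurface of $\Sigma'$ (the doubling construction changes the identification locus near the handle feet), but the common subsurface containing all curves, generators, and original basepoints is what matters, and your argument runs through unchanged. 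Two small remarks: (i) attaching a $1$-handle either adds genus to a region of $\Sigma\smallsetminus(\bsa\cup\bsb)$ or merges two such regions, so no ``new connected components'' are created; you should instead just note that each region of $\Sigma'\smallsetminus(\bsa\cup\bsb)$ contains at least one of the original basepoints, which is what forces any domain with nonnegative multiplicities to vanish on the altered regions, whence it is genuinely supported in the overlap; (ii) when a handle joins two distinct regions, $\Sigma'$ is not a connected sum with a torus but rather a tube sum between two regions of $\Sigma$---your phrasing accommodates this while the paper's does not, which is a modest point in favor of your version. Your added discussion of admissibility transfer is correct and a useful supplement to the paper's proof, which omits it.
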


\begin{proof}
It follows from the description of the surface $\Pg'$ that $\arcs$ can also be seen as a pairwise disjoint collection of properly embedded arcs on $\Pg'$. Moreover, there is a canonical 1--1 correspondence between unordered tuples of intersection points in the Heegaard diagrams $(\Sigma,\bsb,\bsa)$ and $(\Sigma',\bsb,\bsa)$. Also note that $\Sigma'$ is obtained from $\Sigma$ by connected summing with tori along regions in the Heegaard diagram $(\Sigma,\bsb,\bsa)$ with basepoints. Therefore, having fixed a generic almost complex structure $\acs^{}$ on $\Sigma\times[0,1]\times\R$, we can ``extend'' it to a generic almost complex structure $\acs'$ on $\Sigma'\times[0,1]\times\R$ so that the holomorphic domains in the pointed Heegaard diagrams $(\Sigma,\bsb,\bsa,\base)$ and $(\Sigma',\bsb,\bsa,\base)$ agree, and the claim follows.
\end{proof}

With the above understood, the proofs of Theorems \ref{thm:main}, \ref{cor:cobord}, and \ref{thm:consum} require working with a more tractable version of $\ord$: 
\begin{defn} 
\label{def:algtor-complete}
Let $(M,\xi)$ be a closed contact $3$-manifold. Fix an open book decomposition $\B=(S,\phi)$ of $M$ supporting $\xi$. Then define
\[\bbord(\B):=\min_{\arcs}\{\ordn(S,\phi,\arcs)\},\]
where the minimum is taken over all choices of multi-basis $\arcs$ on $S$. Indeed, \[\ord(M,\xi)=\min_{\B}\{\bbord(\B)\}.\]
\end{defn}
The quantity $\bbord$ yields an invariant of open book decompositions. We would like to understand its behavior under positive stabilization. Recall that a positive stabilization of an open book decomposition $(S,\phi)$ is an open book decomposition $(S',\phi')$ where $S'$ is obtained from $S$ by attaching a $1$-handle $H$, and $\phi'$ differs from $\phi$ by a right-handed Dehn twist around a simple closed curve $c \subset S'$ that intersects the cocore of $H$ in exactly one point; in other words, $\phi'=\phi\circ\tau_c$. As we will show next, $\bbord$ is non-increasing under positive stabilization. To prove this, we need the flexibility to move from one arc collection to another without increasing the value of $\bbord$. Recall that one can pass from one basis on $S$ to another via a sequence of \emph{arc slide}s. Given a basis $\{a_1,a_2,\dots,a_\G\}$ on $S$ where $a_1$ and $a_2$ are adjacent, namely, there is an arc $\tau\subset\partial S$ with endpoints on $a_1$ and $a_2$ that intersects no other $a_i$, define $a_1+a_2$ to be a properly embedded arc in $S$ isotopic rel $\partial(a_1\cup a_2)\smallsetminus\partial\tau$ to $a_1\cup\tau\cup a_2$ and is disjoint from all other $a_i$. Then passing from $\{a_1,a_2,\dots,a_\G\}$ to $\{a_1+a_2,a_2,\dots,a_\G\}$ is called an arc slide. Somewhat similarly, given a multi-basis on $S$, one can pass to a multi-basis containing an arbitrary arc basis on $S$ via a sequence of \emph{multi-arc slide}s. Given a multi-basis $\arcs$ containing a basis $\{a_1,a_2,\dots,a_\G\}$ on $S$ where $a_1$ and $a_2$ are adjacent and $\arcs$ contains $m$ parallel copies of the arc $a_1$, a multi-arc slide removes all parallel copies of the arc $a_1$ and adds $m+1$ parallel copies of the arc $a_1+a_2$ as well as $m$ additional parallel copies of the arc $a_2$. This modification is equivalent to adding a copy of the arc $a_1+a_2$ and then removing each parallel copy of the arc $a_1$ one by one via triangle elimination, resulting in a new multi-basis $\arcs'$. 
\begin{lem}
\label{lem:multi-arc_slide}
Let $\arcs$ be a multi-basis on $\Pg$ and $\arcs'$ be obtained from $\arcs$ by a multi-arc slide. Then $\ordn(\Pg,\phi,\arcs')\leq\ordn(\Pg,\phi,\arcs)$.
\end{lem}
\begin{proof}
This follows readily from Lemma \ref{lem:incl-arcs} and Proposition \ref{prop:triangle_collapse}.
\end{proof}

\begin{cor}
\label{cor:hopf}
Let $\B:=(S,\phi)$ be an open book decomposition, and $\B':=(S',\phi')$ be a positive stabilization of $\B$. Then $\bbord(\B')\leq\bbord(\B)$.
\end{cor}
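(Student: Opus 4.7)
The strategy is to reduce the positive stabilization to a Heegaard-diagram move that preserves the $J_+$-filtered chain complex. Let $\arcs$ be a complete arc system on $S$ realizing $\ordn(S, \phi, \arcs) = \bbord(\B)$, and let $a_0$ denote the cocore of the stabilizing $1$-handle $H$. Set $\arcs' := \arcs \cup \{a_0\}$; this is an arc collection on $S'$ containing a basis. The plan is to establish
\begin{equation*}
\ordn(S', \phi', \arcs') \;\leq\; \ordn(S, \phi, \arcs) \;=\; \bbord(\B).
\end{equation*}
After this, Lemma \ref{lem:incl-arcs} applied to any extension $\arcs' \subseteq \arcs''$ of $\arcs'$ to a complete arc system on $S'$ yields
\[\bbord(\B') \;\leq\; \ordn(S', \phi', \arcs'') \;\leq\; \ordn(S', \phi', \arcs') \;\leq\; \bbord(\B),\]
completing the proof.

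For the key inequality, I would compare the multipointed Heegaard diagrams $(\Sigma, \bsb, \bsa, \base)$ for $(S, \phi, \arcs)$ and $(\Sigma', \bsb \cup \{\beta_0\}, \bsa \cup \{\alpha_0\}, \base')$ for $(S', \phi', \arcs')$ directly. By Proposition \ref{prop:complete-set} I may first apply Pachner moves to put $\arcs$ in convenient position relative to the attaching region of $H$ and the twist curve $c$. The surface $\Sigma'$ is obtained from $\Sigma$ by attaching the tube $H \times \{\tfrac12\} \cup -H \times \{0\}$, and the new curves $\alpha_0 = a_0 \times \{\tfrac12\} \cup a_0 \times \{0\}$ and $\beta_0 = b_0 \times \{\tfrac12\} \cup \phi'(b_0) \times \{0\}$ lie on this tube, with $\beta_0$ extending into $\Sigma$ because of the Dehn twist $\tau_c$. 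Since $c$ meets $a_0$ exactly once inside $H$, a local analysis shows that $(\alpha_0, \beta_0)$ creates a canceling structure akin to a standard Heegaard stabilization: the standard contact intersection $x_0^+$ on the $S'_{1/2}$ half is joined to an auxiliary intersection $x_0^-$ by a small embedded bigon $D_0$ near the cocore, and formula \eqref{eq:jplus-prod} gives $\mu(D_0) = 1$, $e(D_0) = 1/2$, and hence $J_+(D_0) = 0$. The contact generator for the stabilization is $\vx_\xi \cup \{x_0^+\}$, and canceling the acyclic summand generated by the pair $(x_0^+, x_0^-)$ at $J_+ = 0$ yields a filtered chain-equivalence between $\CCF(S', \phi', \arcs')$ and $\CCF(S, \phi, \arcs)$ sending the former contact class to the latter.

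The main obstacle is making this cancellation argument rigorous in the presence of the Dehn twist. Concretely, one needs to (i) verify admissibility of the enlarged multipointed Heegaard diagram, (ii) enumerate Maslov index-$1$ holomorphic domains with a corner at $x_0^\pm$ and show that no other $J_+ = 0$ contribution obstructs splitting off the acyclic summand, and (iii) construct an explicit filtered chain homotopy equivalence $\CCF(S', \phi', \arcs') \to \CCF(S, \phi, \arcs)$ preserving the contact generator. The techniques required are precisely those employed in the proofs of Proposition \ref{prop:isotopy} and Proposition \ref{prop:complete-set}: handle-slide triangle maps, isotopy invariance of $J_+$, and careful basepoint analysis on the enlarged diagram.
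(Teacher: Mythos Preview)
Your overall architecture is sound, but you insert an unnecessary step that creates the very gap you acknowledge. The paper's proof is a two-line application of Lemmas~\ref{lem:incl-surface} and~\ref{lem:incl-arcs}, and the key observation you are missing is this: once you use Proposition~\ref{prop:complete-set} to choose the complete arc system $\arcs$ on $S$ \emph{disjoint from $c$}, the Dehn twist $\tau_c$ acts trivially on $\arcs$ (and on the pushoffs $\barcs$), so $\phi'|_{\arcs}=\phi|_{\arcs}$. Lemma~\ref{lem:incl-surface} then gives $\ordn(S',\phi',\arcs)=\ordn(S,\phi,\arcs)=\bbord(\B)$ immediately, with no analysis of the cocore, no extra intersection points, and no filtered cancellation argument. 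Extending $\arcs$ to a complete system $\arcs'$ on $S'$ and applying Lemma~\ref{lem:incl-arcs} finishes the proof.

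By contrast, your route forces you to include the cocore $a_0$ and analyze the pair $(\alpha_0,\beta_0)$. But $\beta_0$ contains $\phi'(b_0)=\phi(\tau_c(b_0))$, and since $\tau_c(b_0)$ wraps once around $c$ and enters $S$, the monodromy $\phi$ can carry it across many $\alpha_i$; there is no reason for $\alpha_0\cap\beta_0$ to consist of only two points, nor for the resulting subcomplex to split off as an acyclic tensor factor. Your steps (ii) and (iii) are therefore not routine bookkeeping but a genuine filtered chain-homotopy argument that you have not supplied. All of this is avoided by simply not adding $a_0$: the inequality $\ordn(S',\phi',\arcs')\leq\ordn(S',\phi',\arcs)$ from Lemma~\ref{lem:incl-arcs} absorbs whatever happens near the cocore without you ever having to look at it.
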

\begin{proof}
Let $\arcs$ be a multi-basis such that $\bbord(\B)=\ordn(S,\phi,\arcs)$. By a sequence of multi-arc slides, pass to a multi-basis $\arcs'$ on $\Pg$ that is disjoint from $c$. Then $\ordn(S,\phi,\arcs')=\ordn(S,\phi,\arcs)$ by Lemma~\ref{lem:multi-arc_slide} since $\bbord(\B)=\ordn(S,\phi,\arcs)$, and $\ordn(S',\phi',\arcs')=\ordn(S,\phi,\arcs')$ by Lemma \ref{lem:incl-surface} since $\arcs'$ is disjoint from $c$. As a result, 
\[\bbord(\B')\leq\ordn(\Pg',\phi',\arcs')=\ordn(\Pg,\phi,\arcs')=\ordn(S,\phi,\arcs)=\bbord(\B).\qedhere\]
\end{proof}

We move on to analyze the behavior of $\ord$ under Legendrian surgery.
\begin{prop} \label{prop:leg} Let $(\Pg,\phi)$ be an open book decomposition and $\arcs$ be any collection of pairwise disjoint properly embedded arcs on $\Pg$ that contains a basis. Suppose $c$ is a non-separating simple closed curve on $\Pg$ which meets each arc in $\phi(\arcs)$ at most once. Then $\ordn(\Pg, \tau_c \circ \phi, \arcs) \geq \ordn(\Pg, \phi, \arcs)$.
\end{prop}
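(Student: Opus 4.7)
The plan is to follow the triple-diagram template developed in the proofs of Propositions \ref{prop:isotopy} and \ref{prop:complete-set}: build a Heegaard triple that encodes the Legendrian surgery cobordism, show that the induced chain map is $J_+$-filtered, and verify that it sends the contact generator to the contact generator. Concretely, I would produce a filtered chain map
\[\widehat{F}:\cfhat(\Sigma,\bsb',\bsa)\to\cfhat(\Sigma,\bsb,\bsa)\]
with $\widehat{F}(\vx'_\xi)=\vx_\xi$, where $\bsb=\{b_i\times\{\tfrac{1}{2}\}\cup\phi(b_i)\times\{0\}\}$ arises from $\phi$ as in \textsection\ref{sec:def-background} and $\bsb'=\{b_i\times\{\tfrac{1}{2}\}\cup\tau_c(\phi(b_i))\times\{0\}\}$ is its analog for the monodromy $\tau_c\circ\phi$. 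Such a map induces a morphism of spectral sequences from $E^\ast(\Pg,\tau_c\circ\phi,\arcs)$ to $E^\ast(\Pg,\phi,\arcs)$ preserving contact generators; by contrapositive, if $\vx'_\xi$ survives to page $k+1$ in its spectral sequence then so does $\vx_\xi$ in the target, yielding the desired inequality.

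First I would form the Heegaard triple diagram $(\Sigma,\bsb,\bsb',\bsa,\base)$. The hypothesis $|c\cap\phi(a_i)|\leq 1$ yields $|c\cap\phi(b_i)|\leq 1$ after a small isotopy, so each pair $(\beta_i,\beta'_i)$ differs in the $S\times\{0\}$ half by a local modification near at most one twist point; a further small Hamiltonian perturbation makes $\beta_i$ and $\beta'_i$ transverse in a controlled bigon-like configuration, while on the $S\times\{\tfrac{1}{2}\}$ half they already agree. The chain map of interest is
\[\hat{f}_{\bsb,\bsb',\bsa;\mathfrak{t}_\xi}(\vtheta\otimes\,\cdot\,):\cfhat(\Sigma,\bsb',\bsa)\to\cfhat(\Sigma,\bsb,\bsa),\]
where $\vtheta$ is the top-degree generator of $\cfhat(\Sigma,\bsb,\bsb',\spc_\circ)$. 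Admissibility of the triple follows from the placement of basepoints $\base$ on both sides of every curve away from the twist region, together with the non-separating hypothesis on $c$, via an argument parallel to the admissibility step in Proposition \ref{prop:complete-set}: any triply periodic domain involving $c$ nontrivially forces cancellation of positive and negative regions. Defining $J_+$ on $\pitwo(\vtheta,\vx',\vy)$ via the triple-diagram formula \eqref{eq:jplus-non-prod}, the argument in \textsection\ref{sec:def-complex} transcribes to give $J_+\in 2\Z_{\geq 0}$ on embedded index-$0$ curves. Additivity of $J_+$ under concatenation, combined with the fact that the $J_+$-filtered differential on $\cfhat(\Sigma,\bsb,\bsb',\spc_\circ)$ restricted to the $\vtheta$-level is identically zero (bigon contributions carry $J_+=0$, as in Proposition \ref{prop:isotopy}), then delivers the $J_+$-filtered property of $\hat{f}_{\bsb,\bsb',\bsa;\mathfrak{t}_\xi}(\vtheta\otimes\,\cdot\,)$.

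The main obstacle is the concluding step: identifying the unique holomorphic triangle responsible for $\hat{f}_{\bsb,\bsb',\bsa;\mathfrak{t}_\xi}(\vtheta\otimes\vx'_\xi)=\vx_\xi$. Here the single-intersection hypothesis is essential. For each index $i$ with $c\cap\phi(b_i)=\emptyset$, the relevant local contribution is a small triangle of multiplicity one of the type appearing in Proposition \ref{prop:isotopy}; for each index $i$ with $|c\cap\phi(b_i)|=1$, the local contribution is a small triangular or quadrilateral region bounded by the twist, again of multiplicity one. The placement of the basepoints $\base$ forces multiplicity zero outside of these local pieces, and a direct Maslov-index computation (using \cite{Sarkar}) shows that the resulting domain $\mathcal{D}_\Delta$ is the unique Maslov index-$0$ candidate contributing to $\hat{f}(\vtheta\otimes\vx'_\xi)$; any competing domain would either acquire a basepoint or strictly increase the Maslov index through additional obtuse corners. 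Uniqueness of the holomorphic representative of $\mathcal{D}_\Delta$ with respect to a generic split complex structure on $\Sigma\times T$ follows from the Riemann Mapping Theorem for the simply-connected pieces, and from the hexagonal/annular analysis in the second step of the proof of Proposition \ref{prop:complete-set} for any remaining multiply-connected component; boundary injectivity in the sense of \cite{Lipshitz} is guaranteed by the presence of multiplicity-zero regions adjacent to $\mathcal{D}_\Delta$. This completes the proof.
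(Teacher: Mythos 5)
Your overall strategy matches the paper's: form a Heegaard triple encoding the positive Dehn twist, obtain a $J_+$-filtered chain map preserving the contact generator, and conclude by the induced morphism of spectral sequences. The paper's triple is $(\Sigma,\bsb,\bsg,\bsa,\base)$ with $\gamma_i=b'_i\times\{\tfrac12\}\cup\tau_c\circ\phi(b'_i)\times\{0\}$, which differs from your $\bsb'$ only by the small push-off $b'_i$ of $b_i$ along $\partial S$ (this is what guarantees transversality of $\beta_i$ and $\gamma_i$ in the $S\times\{\tfrac12\}$ half). Also, the relevant $\spinc$ structure is the canonical one on the Stein surgery cobordism, written $\mathfrak{t}_\omega$ in the paper, not $\mathfrak{t}_\xi$; this is more than a notational slip since the Legendrian surgery cobordism is not a product, so one must single out the Stein $\spinc$ structure.

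The genuine gap is in the step where you assert the $J_+$-filtered property via ``bigon contributions carry $J_+=0$, as in Proposition \ref{prop:isotopy}.'' That justification is specific to small Hamiltonian translates, where all generators of $\cfhat(\Sigma,\bsb',\bsb)$ lie near one another and are connected by small bigons. Here $\bsb$ and $\bsg$ differ by an honest Dehn twist about $c$, so for each index $i$ with $|c\cap\phi(b_i)|=1$ the curves $\beta_i$ and $\gamma_i$ intersect in a more complex pattern in the $S\times\{0\}$ half, and the Maslov index-$1$ domains leaving $\vtheta$ are not bigons and in general do not have $J_+=0$. Since the count of such domains only vanishes modulo $2$, one could worry that distinct domains contributing to $\del\vtheta$ sit at different filtration levels, in which case some $\del_\ell\vtheta$ would be nonzero and the $A_\infty$ relations would not respect the filtration. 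What rescues the argument in the paper is not that $J_+=0$ on these domains, but that \emph{all} classes in $\pitwo(\vtheta,\cdot)$ have the same $J_+$ value (this is the content of Figure \ref{fig:surgregion}), so the cancellation happens within a single filtration level and the $J_+$-filtered differential of $\vtheta$ still vanishes. Without this observation your chain map need not be filtered, and the morphism of spectral sequences you want is not yet established.

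The remainder of your outline (admissibility from basepoint placement, the unique triangle domain contributing to $\hat{f}(\vtheta\otimes\vx'_\xi)=\vx_\xi$ via the Riemann Mapping Theorem, and the inequality of orders) is consistent with the paper, although your final ``uniqueness'' discussion invoking the annular analysis of Proposition \ref{prop:complete-set} is unnecessary: the contributing domain here is a disjoint union of small triangles in the $S\times\{\tfrac12\}$ half near $\partial S$, so RMT suffices directly.
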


\begin{proof}
To start with, use $(\Pg, \phi, \arcs)$ and the curve $c$ to form a multipointed triple Heegaard diagram $(\Sigma, \bsb, \bsg, \bsa, \base)$ where $\bsg=\{\gamma_1,\dots,\gamma_k\}$ with $\gamma_i=b'_i\times\{\frac{1}{2}\}\cup \tau_c \circ \phi(b'_i)\times\{0\}$ such that $b'_i$ is obtained from $b_i$ by slightly pushing along $\partial \Pg$ in the direction of the boundary orientation as in Figure \ref{fig:legtriple}.

Notice that $(\Sigma, \bsb, \bsa, \base)$ is the multipointed Heegaard diagram associated to $(\Pg, \phi, \arcs)$ and $(\Sigma, \bsg, \bsa, \base)$ is the multipointed Heegaard diagram associated to $(\Pg, \tau_c \circ \phi, \arcs)$. Meanwhile, the multipointed Heegaard diagram $(\Sigma, \bsb, \bsg)$ describes the manifold $\#_{\G} S^1 \times S^2$. Note also that the open book decomposition $(\Pg, \tau_c)$ together with the collection of arcs $\{b_1, \dots, b_k\}$ specifies the Heegaard diagram $(\Sigma, \bsg, \bsb)$ as in \cite{HKM}. The chain complex $\cfhat(\Sigma, \bsb, \bsg)$ has trivial differential and the generator $\vtheta$ indicated in Figure \ref{fig:legtriple} is the topmost generator. 

\begin{figure}[h]
\centering
\includegraphics[width=3in]{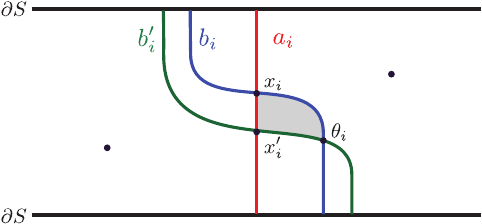}
\caption{Part of the restriction of the multipointed Heegaard triple diagram $(\Sigma,\bsb,\bsg,\bsa,z)$ to $S\times\{\frac{1}{2}\}\subset\Sigma$.}
\label{fig:legtriple}
\end{figure}

The placement of the basepoints guarantees, once again, that the multipointed triple Heegaard diagram $(\Sigma, \bsb, \bsg, \bsa, \base)$ is admissible. Now consider the chain map  
\begin{equation}
\label{eq:surgchainmap}
\hat{f}_{\bsb,\bsg,\bsa}:\cfhat(\Sigma,\bsb,\bsg)\otimes_{\F} \cfhat(\Sigma,\bsg,\bsa)\to \cfhat(\Sigma,\bsb,\bsa),\end{equation}
induced by the cobordism described by the triple Heegaard diagram $(\Sigma, \bsb, \bsg, \bsa)$. This chain map is non-trivial only in the canonical $\spinc$ structure $\mathfrak{t}_\omega$ corresponding to the Stein structure $\omega$ on the Legendrian surgery cobordism. This $\spinc$ structure restricts to the canonical $\spinc$ structures $\spc_\xi$ and $\spc_{\xi'}$ before and after surgery, respectively. Moreover, as is depicted in Figure \ref{fig:surgregion}, the differential on $\F\cdot\vtheta\otimes_{\F} \cfhat(\Sigma,\bsg,\bsa,\spc_{\xi'})$ is identically zero; hence, it is a subcomplex of $\cfhat(\Sigma,\bsb,\bsg)\otimes_{\F} \cfhat(\Sigma,\bsg,\bsa)$. Restricting \eqref{eq:surgchainmap} to this subcomplex, we obtain a chain map
\[\hat{f}_{\bsb,\bsg,\bsa;\mathfrak{t}_\omega}(\vtheta\otimes\cdot):\cfhat(\Sigma,\bsg,\bsa,\spc_{\xi'})\to \cfhat(\Sigma,\bsb,\bsa,\spc_\xi).\]
In fact, the $J_+$-filtered differential on $\F\cdot\vtheta\otimes_{\F} \cfhat(\Sigma,\bsg,\bsa,\spc_{\xi'})$ is identically zero since all homology classes in $\pitwo(\vtheta,\cdot)$ have the same $J_+$ value (see Figure \ref{fig:surgregion}). 
\begin{figure}[b]%
\includegraphics[width=.5\columnwidth]{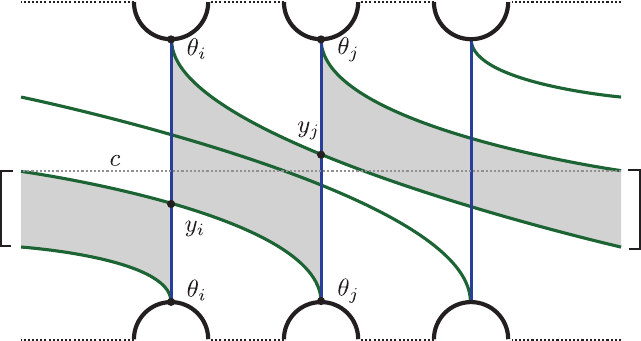}%
\caption{A local picture of $-\Pg\times\{0\}\subset\Sigma$ part of the Heegaard diagram $(\Sigma, \bsb, \bsg)$ near the surgery curve and all intersecting arcs. The shaded domains representing pseudo-holomorphic curves with negative punctures at $\vtheta$ have the same $J_+$ value. The brackets indicate that the ends of the shaded region connect to one another.}%
\label{fig:surgregion}%
\end{figure}
Therefore, having decomposed the above chain map as
\[\hat{f}_{\bsb,\bsg,\bsa;\mathfrak{t}_\omega}(\vtheta\otimes\cdot)=f^0+f^1+\cdots+f^\ell+\cdots,\]
where $f^\ell$ counts embedded Fredholm index-$0$ pseudo-holomorphic curves with $J_+=2\ell$, we have  
\begin{equation}
\label{eq:identity-2}
\sum_{i+j=\ell}(f^i\circ\partial^{'}_{j}-\partial^{}_{i}\circ f^j)=0.
\end{equation}
just as in Section \ref{sec:independence}. The identity \eqref{eq:identity-2} implies that there is a filtered chain map from $(\CCF(S,\tau_c\circ\phi,\arcs),\widehat{\partial}')$ to $(\CCF(S,\phi,\arcs),\widehat{\partial})$ and hence a morphism of spectral sequences from $E^\ast(S,\tau_c\circ\phi,\arcs;J'_{\mathit{HF}})$ to $E^\ast(S,\phi,\arcs;J^{}_{\mathit{HF}})$. In addition, $\hat{f}_{\bsb,\bsg,\bsa;\mathfrak{t}_\omega}(\vtheta\otimes\vx'_\xi)=\vx_\xi$ since the shaded triangle in Figure \ref{fig:legtriple} is the only holomorphic domain that contributes to this chain map due to the placement of the basepoints, and it is represented by a unique pseudo-holomorphic curve by the Riemann Mapping Theorem. Hence, $\ordn(\Pg, \tau_c \circ \phi, \arcs; \acs') \geq \ordn(\Pg, \phi, \arcs; \acs^{})$ as desired.
\end{proof}

\begin{cor} 
\label{cor:leg}
Let $\B:=(\Pg,\phi)$ be an open book decomposition and suppose $\B':=(\Pg,\phi')$ is obtained from $\B$ by Legendrian surgery, i.e. $\phi' = \tau_{c_n} \circ \cdots \circ \tau_{c_1} \circ \phi$. Then 
\begin{equation}
\label{eq:leg}
\bbord(\B) \leq \bbord(\B').
\end{equation}
As a consequence, if $\B:=(\Pg,\phi)$ is an open book decomposition where $\phi$ can be written as a product of positive Dehn twists, then $\bbord(\B)=\infty$.
\end{cor}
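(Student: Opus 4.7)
The plan is to iterate Proposition~\ref{prop:leg} using the freedom provided by Proposition~\ref{prop:complete-set} to switch between complete arc collections at each step, and then to combine the resulting inequality with a direct computation at the trivial monodromy to obtain the second claim.

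For the first inequality, set $\phi_i := \tau_{c_i} \circ \cdots \circ \tau_{c_1} \circ \phi$, so that $\phi_0 = \phi$ and $\phi_n = \phi'$. For each $i = 1, \ldots, n$, I would pick a complete arc collection $\arcs_i$ on $\Pg$ for which the surgery curve $c_i$ meets every curve in $\phi_{i-1}(\arcs_i)$ in at most one point. Granted such an $\arcs_i$, Proposition~\ref{prop:leg} yields
\[\ordn(\Pg, \phi_{i-1}, \arcs_i) \leq \ordn(\Pg, \phi_i, \arcs_i),\]
and Proposition~\ref{prop:complete-set} identifies the two sides with $\bbord((\Pg, \phi_{i-1}))$ and $\bbord((\Pg, \phi_i))$, respectively. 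Telescoping these $n$ inequalities produces $\bbord(\B) \leq \bbord(\B')$.

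The main obstacle is the construction of the complete arc collection $\arcs_i$ for each $i$. Equivalently, setting $c'_i := \phi_{i-1}^{-1}(c_i)$, one seeks a complete arc collection on $\Pg$ whose individual arcs meet $c'_i$ transversally in at most one point. I would build such $\arcs_i$ by cutting $\Pg$ along $c'_i$, choosing a complete arc collection on the cut surface, and pasting back while making any necessary modifications so that each arc remains non-isotopic to the others and meets $c'_i$ in at most one point. The existence of such an arc collection on $\Pg$ is guaranteed by standard surface topology, applied to the non-separating simple closed curve $c'_i$; this is the step where I expect the most bookkeeping.

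For the second claim, note that $\B = (\Pg, \phi)$ with $\phi$ a product of positive Dehn twists is obtained from $(\Pg, \mathrm{id})$ by Legendrian surgery, so by the first part $\bbord((\Pg, \mathrm{id})) \leq \bbord(\B)$. It therefore suffices to show $\bbord((\Pg, \mathrm{id})) = \infty$. The open book $(\Pg, \mathrm{id})$ supports the standard Stein-fillable contact structure on $\#_{2g + \scB - 1} S^1 \times S^2$. A direct analysis of its multipointed Heegaard diagram shows that, for a suitable choice of complete arc collection and small isotopy in the definition of $\barcs$, every Maslov index-$1$ holomorphic curve has $J_+ = 0$ by formula~\eqref{eq:jplus-2}. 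Consequently $\partial_\ell = 0$ for all $\ell \geq 1$, every higher differential in the spectral sequence vanishes, and since the contact class is non-trivial for the standard Stein-fillable contact structure, $\vx_\xi$ represents a non-zero class in every page. This gives $\bbord((\Pg, \mathrm{id})) = \infty$, hence $\bbord(\B) = \infty$.
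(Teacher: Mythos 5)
Your proof follows essentially the same route as the paper's: iterate Proposition~\ref{prop:leg} one twist at a time, using Proposition~\ref{prop:complete-set} to change the complete arc collection between twists so that the next surgery curve meets each arc in the image of the current monodromy at most once, and then anchor the chain of inequalities at the trivial monodromy. One small divergence is in the treatment of $\bbord(\Pg,\mathrm{id}_\Pg)=\infty$. The paper's observation is that for the open book $(\Pg,\mathrm{id}_\Pg)$ the $J_+$-filtered differential $\widehat\partial$ vanishes identically (each pair $\alpha_i,\beta_i$ is a small Hamiltonian translate with exactly two intersection points, the two connecting bigons cancel mod $2$), so every page of the spectral sequence equals $\cfhat$ and $\vx_\xi$ is trivially a nonzero survivor; no appeal to the nonvanishing of $\OSz$ is needed. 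You instead argue only that all Maslov index-$1$ curves have $J_+=0$, hence $\partial_\ell=0$ for $\ell\geq 1$, and then invoke nonvanishing of the Ozsv\'ath--Szab\'o class of the standard tight structure on $\#\, S^1\times S^2$ to conclude $\vx_\xi\neq 0$ on the $E^1$ page. Both are correct, but the paper's version is self-contained, whereas yours imports an external fact about $\OSz$ that is not otherwise used in this proof. Your sketch of constructing the complete arc collection meeting $c'_i=\phi_{i-1}^{-1}(c_i)$ at most once (by cutting along $c'_i$ and reassembling) is a reasonable elaboration of a fact the paper simply asserts.
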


\begin{proof} 
We will apply Proposition \ref{prop:leg} one Dehn twist at a time, noting that for each Dehn twist curve $c_i$ we can find a multi-basis $\arcs$ on $\Pg$ so that $c_i$ intersects each arc in the image of $\arcs$ under the monodromy at most once. With the preceding understood, for each $i\in\{0,1,\dots,n\}$ denote by $\B_{i}$ the open book decomposition $(\Pg,\phi_{i})$ where $\phi_0=\phi$ and $\phi_{i}=\tau_{c_{i}} \circ \cdots \circ \tau_{c_1} \circ \phi$ for $i\in\{1,\dots,n\}$. For each $i\in\{1,\dots,n\}$, fix a multi-basis $\arcs_i$ on $\Pg$ such that $\bbord(\B_i)=o(\Pg,\phi_i,\arcs_i)$. Performing a sequence of multi-arc slides, pass to a multi-basis ${\arcs'}_{i}$ on $\Pg$ such that $c_i$ intersects each arc in $\phi_{i-1}({\arcs'}_{i})$ at most once. It follows from Lemma~\ref{lem:multi-arc_slide} and Proposition \ref{prop:leg} that 
\[\bbord(\B_{i-1})\leq o(\Pg,\phi_{i-1},{\arcs'}_{i})\leq o(\Pg,\phi_i,{\arcs'}_{i})=\bbord(\B_i).\]
Concatenating these inequalities for $i\in\{1,\dots,n\}$ while noting that $\B_0=\B$ and $\B_n=\B'$, we achieve the first claim of the corollary.

The last claim of the corollary follows immediately from \eqref{eq:leg} once we note that $\bbord(\Pg,id_\Pg)=\infty$. The latter is because the $J_+$-filtered differential in the corresponding Heegaard Floer chain complex is zero.
\end{proof}

With all the results needed in place, we are ready to prove Theorem \ref{cor:cobord}, and the second bullet point of Theorem \ref{thm:main}.

\begin{proof}[Proof of Theorem \ref{cor:cobord}]
Let $(M',\xi')$ be obtained from $(M,\xi)$ by Legendrian surgery. It suffices to prove this for Legendrian surgery on a single curve. Suppose without loss of generality that the Legendrian $c$ lies on a page of an open book decomposition $\B$ of $M$ supporting $\xi$. Positively stabilize $\B$ to get to an open book decomposition $\B_1$ which realizes $\ord(M,\xi)$; namely, $\bbord(\B_1) = \ord(M,\xi)$. Now consider the open book decomposition $\B_2$ of $M'$ supporting $\xi'$ obtained by Legendrian surgery on $c$. Positively stabilize $\B_2$ to get an open book decomposition $\B'$ with $\bbord(\B') = \ord(M',\xi')$. Now, mirroring these stabilizations on $\B_1$, we obtain an open book decomposition $\B_1'$ of $M$ supporting $\xi$ which, after Legendrian surgery, gives $\B'$. By Corollary \ref{cor:hopf} we have 
\[\bbord(\B_1') \leq \bbord(\B_1) = \ord(M,\xi),\] which implies that $\bbord(\B_1') = \ord(M,\xi)$; and by Corollary \ref{cor:leg} we have 
\[\bbord(\B_1') \leq \bbord(\B') = \ord(M',\xi').\] Hence 
\[\ord(M,\xi) = \bbord(\B'_1) \leq \bbord(\B') = \ord(M',\xi').\qedhere\]
\end{proof}

\begin{cor}
\label{cor:stein}
Let $(M,\xi)$ be Stein fillable. Then $\ord(M,\xi)=\infty$.
\end{cor}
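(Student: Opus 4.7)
My plan is to combine the Loi--Piergallini/Giroux characterization of Stein fillability in terms of open books with the monotonicity results already established for $\bbord$. Recall that by Giroux's theorem (together with results of Loi--Piergallini and Akbulut--Ozbagci), a closed contact 3-manifold $(M,\xi)$ is Stein fillable if and only if it admits a supporting open book decomposition $\B_0=(S_0,\phi_0)$ whose monodromy $\phi_0$ is a product of positive Dehn twists. Applied to our Stein fillable $(M,\xi)$, this produces such a $\B_0$, and Corollary \ref{cor:leg} immediately gives $\bbord(\B_0)=\infty$.

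In view of Proposition \ref{prop:AT=ATc}, it suffices to show $\ordc(M,\xi)=\infty$, that is, $\bbord(\B)=\infty$ for \emph{every} open book decomposition $\B$ of $M$ supporting $\xi$. To promote the single computation on $\B_0$ to this universal statement, I would invoke Giroux's correspondence in its stabilization form: any two open books supporting the same contact structure admit a common positive stabilization. So given an arbitrary $\B$ supporting $\xi$, there is an open book $\B''$ that is simultaneously a positive stabilization of $\B$ and of $\B_0$.

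The next key observation is that positive stabilization composes the monodromy with a single right-handed Dehn twist, so if $\phi_0$ is a product of positive Dehn twists then so is the monodromy of $\B''$. Corollary \ref{cor:leg} therefore yields $\bbord(\B'')=\infty$. On the other hand, since $\B''$ is a positive stabilization of $\B$, Corollary \ref{cor:hopf} gives $\bbord(\B'')\leq\bbord(\B)$, forcing $\bbord(\B)=\infty$. As $\B$ was arbitrary, this shows $\ordc(M,\xi)=\infty$, and hence $\ord(M,\xi)=\infty$ by Proposition \ref{prop:AT=ATc}.

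All the real work is already packaged in the earlier results: Corollary \ref{cor:leg} handles the monodromy side, Corollary \ref{cor:hopf} controls behavior under stabilization, and Proposition \ref{prop:AT=ATc} translates between $\ord$ and $\ordc$. The only genuinely new input is the invocation of the Giroux/Loi--Piergallini characterization plus common positive stabilization, so there is no substantial obstacle; the main conceptual point is simply recognizing that the minimum defining $\ordc$ forces us to argue for every supporting open book, which is precisely what the common stabilization step accomplishes.
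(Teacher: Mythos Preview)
Your argument is correct and uses the same toolkit as the paper (Corollary~\ref{cor:leg}, Corollary~\ref{cor:hopf}, Giroux common stabilization, Proposition~\ref{prop:AT=ATc}), but the route differs slightly. The paper first invokes Theorem~\ref{thm:cobord} to reduce to showing $\ordc(\#_N S^1\times S^2,\xi_{std})=\infty$, and only then runs the common-stabilization argument on that specific contact manifold (stabilizing an arbitrary realizing open book against one with trivial monodromy). You instead apply the common-stabilization argument directly on $(M,\xi)$, stabilizing an arbitrary supporting $\B$ against the Loi--Piergallini/Giroux open book $\B_0$ with positive-factorization monodromy. Your version is marginally more direct in that it avoids appealing to Theorem~\ref{thm:cobord}; the paper's version has the expository virtue of illustrating that theorem's use. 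Both are complete as written.
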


\begin{proof}
A Stein fillable contact 3-manifold admits a supporting open book decomposition $(S,\phi)$ where $\phi$ is a product of positive Dehn twists. To be more explicit, a Stein fillable contact 3-manifold can be obtained via Legendrian surgery on some connected sum $\#_N S^1\times S^2$ equipped with its standard contact structure $\xi_{\it std}$  (see \cite{Gompf}). Therefore, by the second bullet point of Theorem~\ref{thm:main}, it suffices to prove that $\ord(\#_N S^1\times S^2,\xi_{\it std})=\infty$. To see this, let $\B$ be an open book decomposition of $\#_N S^1\times S^2$ supporting $\xi_{\it std}$ which realizes $\ord(\#_N S^1\times S^2,\xi_{\it std})$; in other words, $\bbord(\B)=\ord(\#_N S^1\times S^2,\xi_{\it std})$. As $(\#_N S^1\times S^2, \xi_{\it std})$ is supported by an open book with trivial monodromy, a common stabilization, $\B'$, of that and $\B$ will have a monodromy which can be written as a product of positive Dehn twists and will also realize the minimal $\ordn$. To see this, note that by the second claim in Corollary \ref{cor:leg}, we have $\bbord(\B')=\infty$. By Corollary \ref{cor:hopf}, we also have $\bbord(\B')\leq \bbord(\B)=\ord(\#_N S^1\times S^2,\xi_{\it std})$. Therefore, $\ord(\#_N S^1\times S^2,\xi_{\it std})=\infty$.
\end{proof}

Next, we prove the third bullet point of Theorem \ref{thm:main}: 

\begin{theorem}
\label{thm:multiarcs} 
Given an open book decomposition $(\Pg, \phi)$ of $M$ supporting $\xi$, and a basis $\arcs$ on $\Pg$, there exists a multi-basis $\arcs^m$ on $\Pg$ containing $\arcs$ such that
\[\ordn(\Pg, \phi, \arcs^m) = \ord(M,\xi).\]
\end{theorem}

\begin{proof}
Given an open book decomposition $\B = (\Pg, \phi)$ of $M$ supporting $\xi$, positively stabilize it to pass to an open book decomposition $\B' = (\Pg', \phi')$ with $\Pg'$ built from $\Pg$ by adding 1-handles, and $\phi' = \tau_{c_n} \circ \cdots \circ \tau_{c_1} \circ \phi,$ such that $\B'$ realizes $\ord(M,\xi)$; that is, $\bbord(\B') = \ord(M,\xi)$. Extending $\phi$ to $\Pg'$ as the identity on all the 1-handles, we form the open book decomposition $\tilde{\B} = (\Pg', \phi)$. Since $\phi'$ is obtained from $\phi$ by adding positive Dehn twists, $\bbord(\tilde{\B}) \leq \bbord(\B')$ by Corollary~\ref{cor:leg}. 

Now, fix a multi-basis $\arcs'$ on $\Pg'$ such that 
\[\ordn(\Pg', \phi', \arcs') = \bbord(\B') = \ord(M,\xi).\]
Let $a_1,\dots,a_n$ denote the co-cores of the 1-handles added to $\Pg$ so as to build $\Pg'$ and perform a sequence of multi-arc slides so as to pass to a multi-basis $\arcs''$ that contains the arcs $a_1,\dots, a_n$ and satisfies $\ordn(\Pg',\phi,\arcs'')=\bbord(\tilde{\B})$. We also have $\ordn(\Pg', \phi', \arcs')=\ordn(\Pg', \phi', \arcs'')$ by Lemma~\ref{lem:multi-arc_slide}. Let $\arcs^\circ=\arcs'' \cap \Pg$ and note that $\arcs^\circ$ is a multi-basis on $\Pg$. Furthermore, $\phi$ acts trivially on all arcs in $\arcs'' \smallsetminus \arcs^\circ$. Looking at the Heegaard diagram resulting from $(\Pg',\phi,\arcs'')$, the $\alpha$ and $\beta$ curves corresponding to arcs in $\Pg'\smallsetminus\Pg$ intersect each other exactly twice, forming two canceling bigons and thus contribute zero to $\dhat$. Furthermore, $\alpha_i$ and $\beta_i$ intersect no other $\bsa$-curves or $\bsb$-curves. Thus 
\[\CCF(\Pg', \phi, \arcs'') \equiv \CCF(\Pg', \phi, \arcs^\circ) \otimes_{\F} (\F_{(0)}\oplus\F_{(1)})^{\otimes n},\]
where $\F_{(0)}\oplus\F_{(1)}$ is a graded module over $\F$ with vanishing differential and $n$ is the number of arcs in $\arcs'' \smallsetminus \arcs^\circ$. In particular, 
\[\ordn(\Pg', \phi, \arcs'') = \ordn (\Pg', \phi, \arcs^\circ)\]
By Lemma \ref{lem:incl-surface}, we have $\ordn(\Pg, \phi, \arcs^\circ) = \ordn(\Pg', \phi, \arcs^\circ)$. Consequently,
\[\ordn(\Pg, \phi, \arcs^\circ) = \ordn(\Pg', \phi, \arcs^\circ) = \ordn(\Pg', \phi, \arcs'') \leq \ordn(\Pg', \phi', \arcs'') = \ordn(\Pg', \phi', \arcs') = \ord(M,\xi).\]
Since by definition $\ordn(\Pg, \phi, \arcs^\circ)\geq\ord(M,\xi)$, we have $\ordn(\Pg, \phi, \arcs^\circ)=\ord(M,\xi)$. Finally, given a multi-basis $\arcs$ on $\Pg$, perform a sequence of multi-arc slides to pass from $\arcs^\circ$ to a multi-basis $\arcs^m$ on $\Pg$ containing $\arcs$. Then by Lemma~\ref{lem:multi-arc_slide} $\ordn(\Pg, \phi, \arcs^m)=\ordn(\Pg, \phi, \arcs^\circ)=\ord(M,\xi)$, which finishes the proof.
\end{proof} 

\begin{rmk}
Note that given an open book decomposition $(\Pg,\phi)$ and a multi-basis $\arcs$ on $\Pg$, we can positively stabilize $(\Pg,\phi)$ to pass to a new open book decomposition where $\arcs$ becomes a basis. Then it follows from Corollary \ref{cor:hopf} and Theorem \ref{thm:multiarcs} that $\ord(M,\xi)=\ordn(\Pg,\phi,\arcs)$ for some open book decomposition $(\Pg,\phi)$ supporting the contact structure $\xi$ and a basis $\arcs$ on $\Pg$.
\end{rmk}

Another application of the Legendrian surgery statement in Theorem \ref{cor:cobord} is Theorem \ref{thm:consum}, that the spectral order of a contact connected sum is the minimum of the orders of the summands:

\begin{proof}[Proof of Theorem \ref{thm:consum}] Let $\B_1=(\Pg_1,\phi_1)$ and $\B_2=(\Pg_2,\phi_2)$ be open book decompositions which realize $\ord(M_1,\xi_1)$ and $\ord(M_2,\xi_2)$, respectively. Fix multi-bases $\arcs_1$ and $\arcs_2$ on $\Pg_1$ and $\Pg_2$, respectively, such that $\bbord(\B_i)=o(\Pg_i,\phi_i,\arcs_i)$ for $i=1,2$. Then both $\CCF(S_1,\phi_1,\arcs_1)$ and $\CCF(S_2,\phi_2,\arcs_2)$ can be seen as filtered subcomplexes of $\CCF(S_\#,\phi_\#,\arcs_\#)$ where $\B_1\#\B_2=(S_\#,\phi_\#)$ is the boundary connected sum open book decomposition with $\phi_\#=\phi_2\circ\phi_1$, where we extend each by the identity across the complementary subsurface, and $\arcs_\#=\arcs_1\sqcup\arcs_2$. Hence, by Lemmas \ref{lem:incl-arcs} and \ref{lem:incl-surface},
\[\ord(M_1\# M_2,\xi_1 \# \xi_2) \leq \bbord(\B_1\#\B_2) \leq \bbord(\B_i) = \ord(M_i,\xi_i),\]
for both $i=1$ and $i=2$, and $\ord(M_1\# M_2,\xi_1 \# \xi_2)\leq\min\{\ord(M_1,\xi_1),\ord(M_2,\xi_2)\}$.

For the reverse inequality, let $\B=(S,\phi)$ be a stabilization of $\B_1 \# \B_2$ realizing $\ord(M_1\# M_2,\xi_1 \# \xi_2)$. Ignore the extra positive Dehn twists on $\B$ which arise from its description as a positive stabilization of $\B_1 \# \B_2$. The resulting open book decomposition $\B'=(S,\phi')$ describes the 3-manifold $M_1 \# M_2 \#_k S^1 \times S^2$ for some $k$, the page $S$ contains $\Pg_1\#\Pg_2$ as a subsurface due to $\B$ being a positive stabilization of $\B_1 \# \B_2$, and the monodromy $\phi'$ extends $\phi_\#$ as the identity to the rest of $\Pg$. In particular, $\B$ is obtained from $\B'$ by Legendrian surgery along curves contained in a page of $\B$; hence, 
\[\ord(M_1\# M_2,\xi_1 \# \xi_2) = \bbord(\B) \geq \bbord(\B'),\] 
by the second bullet point of Theorem~\ref{thm:main}.

Fix a multi-basis $\arcs'$ on $S$ such that $\bbord(\B')=o(\Pg,\phi',\arcs')$. After a sequence of multi-arc slides, we can pass to a multi-basis $\widetilde{\arcs}$ on $\Pg$ which contains $\arcs_1\sqcup\arcs_2$. By Lemma~\ref{lem:multi-arc_slide} $\bbord(\B')= o(S,\phi',\widetilde{\arcs})$, and we have
\[\CCF(S,\phi',\widetilde{\arcs}) \cong \CCF(S_1,\phi_1,\arcs_1) \otimes_{\F} \CCF(S_2,\phi_2,\arcs_2) \otimes_{\F} (\F_{(0)} \oplus \F_{(1)})^{\otimes k},\] 
as filtered chain complexes where $\F_{(0)}\oplus\F_{(1)}$ is a graded module over $\F$ with vanishing differential. As a result, $\bbord(\B')= o(S,\phi',\widetilde{\arcs})=\min\{\ordn(\Pg_1,\phi_1,\arcs_1), \ordn(\Pg_2,\phi_2,\arcs_2)\}$. On the other hand, since $\ordn(\Pg_1,\phi_1,\arcs_1) = \bbord(\B_1) = \ord(M_1,\xi_1)$ and $\ordn(\Pg_2,\phi_2,\arcs_2) = \bbord(\B_2) = \ord(M_2,\xi_2)$, by the above inequality we have
\[\min\{\ord(M_1,\xi_1), \ord(M_2,\xi_2)\} \leq \ord(M_1\# M_2,\xi_1\# \xi_2).\]
\end{proof}

\begin{cor}
\label{cor:monoid}
For any surface $S$ with boundary, the set of monodromies yielding open book decompositions supporting contact 3-manifolds $(M,\xi)$ with $\ord(M,\xi)\geq k$ forms a monoid in the mapping class group $\mathrm{Mod}(S, \partial S)$. 
\end{cor}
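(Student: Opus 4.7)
The plan is to verify the two submonoid axioms. For the identity, the open book $(S,\mathrm{id}_S)$ has trivial monodromy---the empty product of positive Dehn twists---so the supported contact manifold is Stein fillable and $\ord=\infty\geq k$ by Corollary~\ref{cor:stein}. Hence $\mathrm{id}_S\in\ord^k(S)$ for every $k$.

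For closure under composition, let $\phi_1,\phi_2\in\ord^k(S)$ with contact manifolds $(M_i,\xi_i)$ satisfying $\ord(M_i,\xi_i)\geq k$, and write $(M,\xi)$ for the contact manifold supported by $(S,\phi_1\circ\phi_2)$. The strategy is to invoke Theorem~\ref{thm:consum} via the boundary connected sum open book $\B_1\#\B_2=(S\natural S,\phi_1\natural\phi_2)$, where $\phi_1\natural\phi_2$ denotes the product in $\mathrm{Mod}(S\natural S,\partial)$ of the two identity extensions (which commute, being supported on disjoint subsurfaces). This open book supports $(M_1\# M_2,\xi_1\#\xi_2)$, for which Theorem~\ref{thm:consum} yields $\ord\geq k$; the remaining task is to transfer this inequality to $\ord(M,\xi)$.

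Mirroring the second half of the proof of Theorem~\ref{thm:consum}, I would proceed as follows: given any open book $\tilde\B$ of $(M,\xi)$ realizing $\ord(M,\xi)$, form a common stabilization $\B=(S',\phi')$ of $\tilde\B$ and $(S,\phi_1\phi_2)$; strip off the positive Dehn twists introduced by the latter stabilization (which does not decrease $\bbord$, by Corollary~\ref{cor:leg}) to obtain $\B''$ supporting $(M,\xi)\#\#_m(S^1\times S^2)$; then exhibit a complete set of arcs on $\B''$ whose filtered chain complex factors as
\[
\CCF(S,\phi_1,\arcs_1)\otimes_{\F}\CCF(S,\phi_2,\arcs_2)\otimes_{\F}(\F_{(0)}\oplus\F_{(1)})^{\otimes m},
\]
whence $\bbord(\B'')\geq\min\{\bbord(S,\phi_1),\bbord(S,\phi_2)\}\geq k$. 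Combining with Corollary~\ref{cor:hopf} gives $\bbord(\B'')\leq\bbord(\B)\leq\bbord(\tilde\B)=\ord(M,\xi)$, hence $\ord(M,\xi)\geq k$.

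The main obstacle is realizing the tensor product factorization: one must arrange the common stabilization $\B$ so that after stripping, its page admits a separating arc decomposing the monodromy into subsurfaces carrying $\phi_1$ and $\phi_2$ separately. Unlike in Theorem~\ref{thm:consum}, where this boundary connected sum structure is inherent in the chosen open book for $M_1\# M_2$, here one must engineer it through a careful choice of stabilization that reorganizes the composition $\phi_1\phi_2$ on $S$ into a form compatible with $\phi_1\natural\phi_2$ on $S\natural S$. Settling this geometric point is the crux of the argument.
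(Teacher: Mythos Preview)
Your treatment of the identity element is fine. For closure under composition, however, you have correctly located a genuine gap and then left it open: the step you label ``the crux'' is exactly the missing ingredient, and it is not a routine matter. When the monodromies $\phi_1$ and $\phi_2$ act on the \emph{same} surface $S$, there is no evident way to stabilize $(S,\phi_1\phi_2)$ and then strip Dehn twists so that the resulting page contains two disjoint copies of $S$ on which the monodromy restricts to $\phi_1$ and $\phi_2$ separately. Without that, the tensor factorization of the filtered complex does not materialize, and your chain of inequalities breaks.

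The paper bypasses this difficulty entirely by invoking an external result: by \cite{BakEtVHM}, there is a Stein cobordism from the disjoint union $(M_{\phi_1},\xi_{\phi_1})\sqcup(M_{\phi_2},\xi_{\phi_2})$ to $(M_{\phi_2\circ\phi_1},\xi_{\phi_2\circ\phi_1})$. Then Theorem~\ref{thm:cobord} gives $\ord(M_{\phi_2\circ\phi_1},\xi_{\phi_2\circ\phi_1})\geq\ord$ of the disjoint union, and Theorem~\ref{thm:consum} identifies the latter with $\min\{\ord(M_{\phi_1},\xi_{\phi_1}),\ord(M_{\phi_2},\xi_{\phi_2})\}$. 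The geometric content you were trying to engineer by hand---passing from $\phi_1\natural\phi_2$ on $S\natural S$ to $\phi_1\phi_2$ on $S$ through positive stabilizations and Legendrian surgeries---is precisely what \cite{BakEtVHM} establishes, so the cleanest repair to your argument is simply to cite that result and apply Theorems~\ref{thm:cobord} and~\ref{thm:consum}.
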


\noindent We use $\ord^k(S)$ to denote this monoid. 

\begin{proof} By \cite{BakEtVHM}, for any two mapping classes $\phi_1$ and $\phi_2$, there is a Stein cobordism starting at the disconnected contact manifold $(M_{\phi_1}, \xi_{\phi_1})  \sqcup (M_{\phi_2}, \xi_{\phi_2})$ and ending at $(M_{\phi_2 \circ \phi_1}, \xi_{\phi_2 \circ \phi_1})$. By Theorems \ref{cor:cobord} and \ref{thm:consum}, this implies that \[\ord(M_{\phi_2 \circ \phi_1}, \xi_{\phi_2 \circ \phi_1}) \geq \ord\left((M_{\phi_1}, \xi_{\phi_1})  \sqcup (M_{\phi_2}, \xi_{\phi_2})\right) = \min \left\{ \ord(M_{\phi_1}, \xi_{\phi_1}), \ord(M_{\phi_2}, \xi_{\phi_2})\right\}.\qedhere\]\end{proof}

\newpage
\section{An example}
\label{sec:example}
In this section, we present a contact 3-manifold $(Y,\xi)$ with non-zero Ozsv\'ath--Szab\'o contact class but with zero spectral order. The contact structure $\xi$ is supported by the open book decomposition $(\Pg,\phi)$ where $\Pg$ is a compact oriented genus-1 surface with two boundary components and $\phi=\tau_a\tau_b{\tau_c}^{-1}$, the product of positive Dehn twists around the curves $a$, $b$ and a negative Dehn twist around the curve $c$ indicated in Figure \ref{fig:genus-1}. The contact structure $\xi$ has non-zero Ozsv\'ath--Szab\'o contact class by \cite[\textsection 4]{Conway} (cf. \cite[Corollary~4]{HP}). We will show that $\ord(Y,\xi)=0$. 

\begin{figure}[h]
\centering
\begin{subfigure}[b]{0.4\linewidth}
\centering
\includegraphics[width=2in]{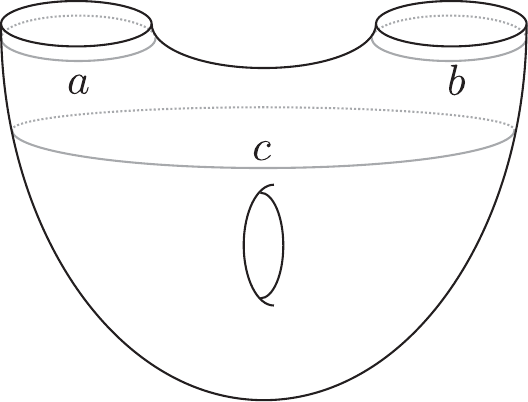}
\caption{}
\label{fig:genus-1}
\end{subfigure}
\begin{subfigure}[b]{0.4\linewidth}
\centering
\includegraphics[width=2in]{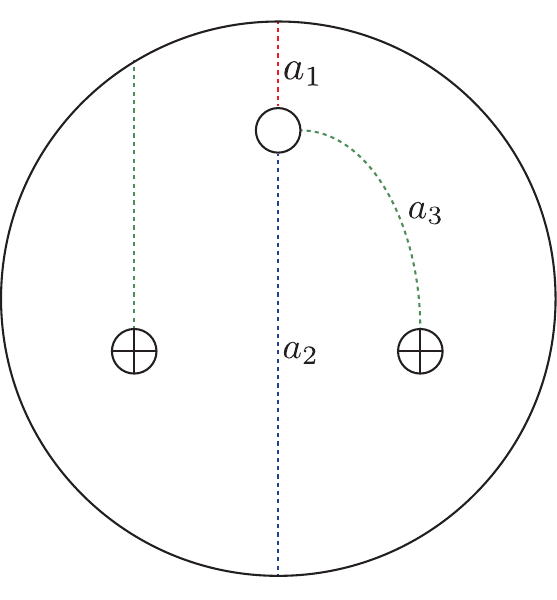}
\caption{}
\label{fig:arcbasis}
\end{subfigure}%
\caption{On the left is the open book decomposition $(\Pg,\phi)$ supporting the contact 3-manifold $(Y,\xi)$. On the right, the basis of arcs $\arcs=\{a_1,a_2,a_3\}$ on $\Pg$, where the two middle circles intersecting $a_3$ are identified.}
\end{figure}

To show that $\ord(Y,\xi)=0$, we need to find a multi-basis $\arcs$ on $\Pg$ such that $\ordn(\Pg,\phi,\arcs)=0$; more explicitly, we will find a generator $\vy$ of the resulting Heegaard Floer chain complex such that $\del_0\vy=\vx_\xi$. As we will show, it suffices to work with the basis of arcs $\{a_1,a_2,a_3\}$ depicted in Figure \ref{fig:arcbasis}. The effect of the monodromy on this basis of arcs is shown in Figure~\ref{fig:monpage}. In what follows, a region without a basepoint will be denoted by $R_i$ if it is numbered $i$ in Figure~\ref{fig:monpage}. 

\begin{figure}[h]
\centering
\includegraphics[width=4in]{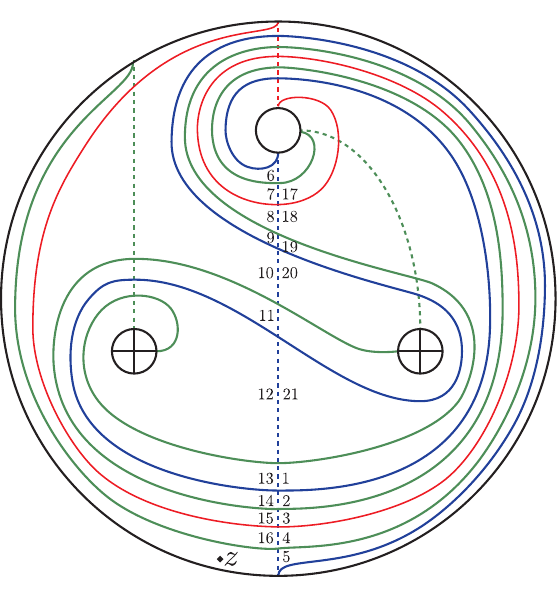}
\caption{The effect of the monodromy applied to the basis of arcs. The resulting regions without basepoint are numbered $1,\dots,21$.}
\label{fig:monpage}
\end{figure}

We claim that the generator $\vy$ determined by the tuple of intersection points $\y=(x_1,y_2,y_3)$ satisfies $\del_0\vy=\vx_\xi$ (see Figure \ref{fig:zerodomain}). In this regard, note that any $J_+=0$ holomorphic domain is an immersed $2n$-gon with all acute corners by \eqref{eq:jplus} and \eqref{eq:jplus-2}. 

\begin{figure}[h]
\centering
\includegraphics[width=4in]{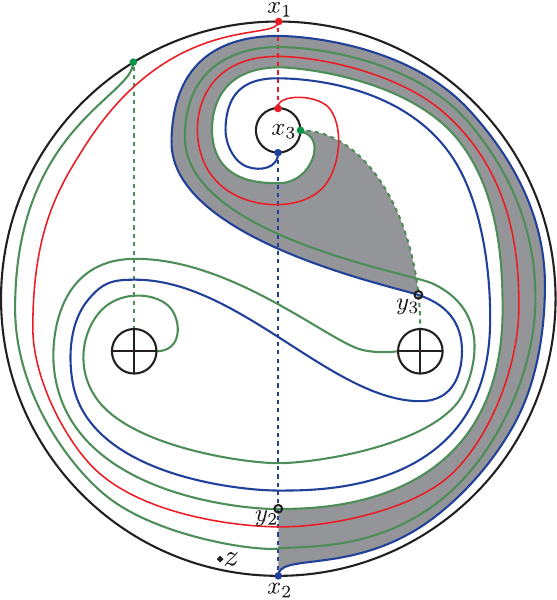}
\caption{The domain $\D_0$ (shaded).}
\label{fig:zerodomain}
\end{figure}

A positive Maslov index-1 $J_+=0$ domain $\D_0$ from $\y$ to $\x_\xi$ is shaded in Figure \ref{fig:zerodomain}. As a formal sum of regions without basepoints in the Heegaard diagram, it is given by
\[\D_0=R_3+R_4+R_5+R_7+R_8+R_9+R_{17}+R_{18}+R_{19}.\]
This domain is an embedded rectangle. Therefore, it has a unique holomorphic representative for a generic split almost complex structure by the Riemann Mapping Theorem. In fact, this is the only domain that represents a positive class in $\pitwo(\vy,\vx_\xi)$. This is because any other domain from $\y$ to $\x_\xi$ has to differ from $\D_0$ by a \emph{periodic domain} representing a \emph{periodic class} in $\pitwo(\vy,\vy)$. The latter is isomorphic to $H_2(Y;\mathbb{Z})$ which is a free Abelian group of rank~$2$. A basis for $\pitwo(\vy,\vy)$ is given by the following periodic domains:
\begin{eqnarray*}
P_1&=&R_1+R_4+R_5-R_6-R_7-R_{10}-R_{11}-R_{14}-R_{15}+R_{18}+R_{19}+R_{21},\\
P_2&=&R_2-R_5+R_6-R_9+R_{11}+R_{13}+2R_{14}+R_{15}+R_{16}-R_{17}-R_{18}-2R_{19}\\
&&-R_{20}-R_{21}.
\end{eqnarray*}
If $\D_0+aP_1+bP_2$ is a positive domain from $\y$ to $\x_\xi$, then in particular the following inequalities are satisfied:
\[0+a\geq 0,\quad 0+b\geq 0,\quad 0-a\geq 0,\quad 0-b\geq 0,\]
via the multiplicities of the regions $R_1$, $R_2$, $R_{10}$, and $R_{20}$ respectively. As a result, $a=0=b$. 

Next, we argue that there are no other positive Maslov index-1 $J_+=0$ domains from $\y$. To see this, let $\D$ be such a domain from $\y$ to some $\vv$ defining a generator of the Heegaard Floer chain complex, and move along the boundary of $\D$ in its boundary orientation. Note firstly that, due to the placement of the basepoint, $\D$ cannot have a corner at $x_1$. Therefore, $\D$ must be an immersed rectangle as none of the regions are bigons. As a result, $\vv = (x_1,v_2,v_3)$ for some $v_i \in \alpha_i\cap\beta_i$, $i=2,3$. Note further that the region $R_{12}$ adjacent to $y_3$ is an immersed 8-gon. Being a positive Maslov index-1 $J_+=0$ domain with four corners, $\D$ must have Euler measure $e(\D)=0$. As Euler measure is additive under unions and $\D$ is a positive domain, $\D$ cannot contain the region $R_{12}$, which has Euler measure $-1$. Hence, $\D$ contains only the region $R_{19}$ among the four regions adjacent to $y_3$. Now, $v_3\neq x_3$ since otherwise $\D=\D_0$. Moreover, as $\D$ does not contain the region $R_0$ with basepoint, it contains $R_{19}$ with multiplicity $1$ and does not contain the region $R_{18}$. This forces $\D$ to be contained in formal sum 
\[R_5+R_9+R_{19},\]
as $\D$ cannot contain the 6-gon regions $R_1$ and $R_{10}$, which have Euler measure $-\frac{1}{2}$. But then, $\D$ cannot have a corner at $y_2$, which is a contradiction.

Consequently, we have $\del_0\vy=\vx_\xi$ which implies that $\ord(Y,\xi)=o(\Pg,\phi,\arcs)=0$ as the spectral order is defined to be the minimum over all choices of open book decompositions $(\Pg,\phi)$ supporting $\xi$ and multi-bases $\arcs$ on $\Pg$. Consequently, by the second bullet point of Theorem \ref{thm:main}, $(Y,\xi)$ is not Stein fillable.

\begin{rmk}
In fact, $\dhat\vy=\del_0\vy+\del_1\vy=\vx_\xi+\vw$ where $\vw$ is determined by the tuple of intersection points $\w=(x_1,w_2,w_3)$ (see Figure \ref{fig:onedomain}). The domain $\D_1$ from $\y$ to $\w$ shown in Figure \ref{fig:onedomain} is an embedded genus-1 surface with one boundary component and $J_+(\D_1)=2$ given by the formal sum
\[\D_1=R_{11}+R_{12}+R_{13}+R_{14}.\]
\begin{figure}[h]
\centering
\includegraphics[width=4in]{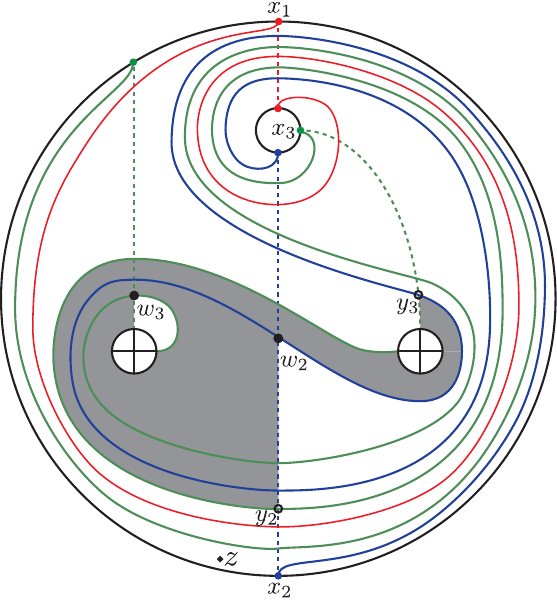}
\caption{The domain $\D_1$ (shaded). Keep in mind that the middle two circles are identified.}
\label{fig:onedomain}
\end{figure}

Arguing similarly to before, we see that if $\D_1+aP_1+bP_2$ is another positive domain from $\y$ to $\w$, then in particular the following inequalities are satisfied:
\[0+a\geq 0,\quad 0+b\geq 0,\quad 0-a\geq 0,\quad 0-b\geq 0,\]
via the multiplicities of the regions $R_1$, $R_2$, $R_7$, and $R_9$ respectively. As a result, $a=0=b$. Furthermore, a slightly more general version of the argument above proves that there are no other positive Maslov index-1 domains from $\y$. In particular, the domain $\D_1$ has a unique (up to a signed count) holomorphic representative since otherwise $\dhat\vy=\vx_\xi$, contradicting non-vanishing of the Ozsv\'ath--Szab\'o contact class.
\end{rmk}

\bibliography{biblio}
\end{document}